\newcommand{\al}{\alpha}
\newcommand{\be}{\beta}
\newcommand{\R}{\mathbb{R}}
\newcommand{\N}{\mathbb{N}}
\newcommand{\Si}{\Sigma}
\newcommand{\de}{\delta}
\newcommand{\pr}{\prime}
\newcommand{\Om}{\Omega}
\newcommand{\Ga}{\Gamma}
\newcommand{\ga}{\gamma}
\newcommand{\ti}{\tilde}
\newcommand{\Diff}{\textrm{Diff}}
\newcommand{\Met}{\textrm{Met}}
\newcommand{\PSL}{\textrm{PSL}}
\newcommand{\Rom}[1]{\uppercase\expandafter{\romannumeral #1}}
\newcommand{\rom}[1]{\expandafter\romannumeral #1}
\theoremstyle{definition}
\newtheorem{theorem}{Theorem}[section]
\newtheorem{proposition}[theorem]{Proposition}
\newtheorem{definition}[theorem]{Definition}
\newtheorem{corollary}[theorem]{Corollary}
\newtheorem{example}[theorem]{Example}
\theoremstyle{plain}
\newtheorem{lemma}[theorem]{Lemma}
\theoremstyle{remark}
\newtheorem{remark}[theorem]{Remark}
\numberwithin{equation}{section}
\begin{document}

\titleformat{\section}
   {\normalfont\bfseries\large\filcenter}
   {\arabic{section}}
   {12pt}{}
\titleformat{\subsection}
   {\normalfont\bfseries}
   {\arabic{section}.\arabic{subsection}}
   {11pt}{}
%
%
%
%

\pagestyle{headings}
\renewcommand{\headrulewidth}{0.4pt}

\title{\textbf{On the existence of min-max minimal surface of genus $g\geq 2$}}
\author{Xin Zhou\\}
\maketitle

\pdfbookmark[0]{On The Existence Of Min-max Minimal Surface of Genus $g\geq 2$}{beg}

\renewcommand{\abstractname}{}    
\renewcommand{\absnamepos}{empty} 
\begin{abstract}
\noindent\textbf{Abstract:} In this paper, we build up a min-max theory for minimal surfaces using sweepouts of surfaces of genus $g\geq 2$.  We develop a direct variational methods similar to the proof of the famous Plateau problem by Douglas \cite{Do} and Rado \cite{Ra}. As a result, we show that the min-max value for the area functional can be achieved by a bubble tree limit (see \cite{Pa}) consisting of branched genus-$g$ minimal surfaces with nodes, and possibly finitely many branched minimal spheres. We also prove a Colding-Minicozzi type strong convergence theorem similar to the classical mountain pass lemma \cite{St}. Our results extend the min-max theory by Colding-Minicozzi and the author to all genera.
\end{abstract}



\section{Introduction}

\subsection{Background}
Existence theory of minimal surfaces originated from the celebrated proof of classical Plateau Problem by Douglas \cite{Do} and Rado \cite{Ra} (see more history in \cite[Chap 4]{CM11}) in 1930s. These minimal surfaces are parametrized by conformal harmonic maps\footnote{See \cite[Lemma 1.4]{SU1}.}. Since then, there are lots of interesting results concerning general existence theory of minimal surfaces using conformal harmonic parametrization\footnote{Another story is the geometric measure theory, and we refer to \cite{Si83} for details.}. Among them, Schoen-Yau \cite{SY} built up an existence theory for incompressible minimal surfaces to study the topology of three manifolds with non-negative scalar curvature. Around the same time, Sacks-Uhlenbeck developed a general existence theory for minimal surfaces in compact manifold using Morse theory for perturbed energy functional \cite{SU1, SU2}. Michallef-Moore used the minimal spheres in \cite{SU1} to prove the topological sphere theorem \cite{MM}. Chen-Tian \cite{CT} gave a general existence theorem for minimal surfaces of arbitrary genus by extending \cite{SU1, SU2} to stratified Riemann surfaces. These results mainly work when the minimal surfaces are area-minimizing in a homotopy class.


Besides the area minimizing case, the min-max theory for minimal surfaces has attracted more interest recently (cf. \cite{J1, CM2, CM3})\footnote{For the geometric measure theory part, see \cite{CD, P81}.}. One remarkable work was given by Colding and Minicozzi in \cite{CM2, CM3}, where they constructed min-max minimal spheres and proved the finite time extinction for three-dimensional Ricci flow under certain topological conditions by studying the evolution of the area of the min-max minimal spheres. A key novelty of their work is a strong convergence result compared to \cite{J1} (see more discussion in \S \ref{further discussion}). Motivated by their work, the author studied the variational construction of min-max minimal tori in \cite{Z}. The difference between spheres and surfaces of genus greater than zero is that the moduli space of conformal structures is nontrivial. The author developed a uniformization result in \cite{Z} to deal with this technical difficulty in the case of tori\footnote{In the case of tori, \cite{DLL} also gave a method to deal with moduli space in an evolutional  setting.}


In the area minimizing case, the study of high genus minimal surfaces achieved many interesting results \cite{SY, SU2, CT}. Therefore a min-max theory for surfaces of arbitrary genus is then a natural question. Using the geometric measure theory setting (see \cite{CD}), Marques and Neves recently \cite{MN} gave an application of the min-max minimal surfaces of arbitrary genus to get certain rigidity results on positive curved compact manifold. Motivated by these works, we build up a min-max theory for minimal surfaces using sweep-outs of genus-$g$ surfaces ($g\geq 2$), hence we extend the results \cite{CM3, Z} to the full generality. 

\subsection{Main result}
To state the main theorem, we recall a few notations here (more detailed versions are given in \S \ref{notations}). Let $\Si_0$ be a Riemann surface of genus $g$ ($g\geq 2$), and $(N, h)$ a closed Riemannian manifold of dimension no less than $3$. Denote $C^0\cap W^{1, 2}(\Si_0, N)$ by the Banach space of mappings $u: \Si_0\rightarrow N$ which are both $C^0$ and $W^{1, 2}$. We call a one-parameter family of mappings $\ga: [0, 1]\rightarrow C^0\cap W^{1, 2}(\Si_0, N)$ a {\em sweep-out}, if
\begin{itemize}
\vspace{-5pt}
\setlength{\itemindent}{1em}
\addtolength{\itemsep}{-0.7em}
\item $\ga(0)$, $\ga(1)$ are mapped to points or a curve;
\item The mapping $\ga$ is homotopically non-trivial in $C^0\cap W^{1, 2}(\Si_0, N)$.
\end{itemize}

\begin{example}
one such example comes from the Heegaard splitting of three manifolds. Let $(M^3, h)$ be an oriented three-manifold, with Heegaard genus $g_0\geq 2$, then there is a smooth foliation $\{\Si_t\}_{t\in[0, 1]}$, where $\Si_0$, $\Si_1$ are graphs (curves), and $\Si_t$ is an embedded genus-$g_0$ surface for $t\in(0, 1)$. Let $\Si_{g_0}$ be a fixed Riemann surface of genus $g_0$, then we can then automatically find a parametrization $\ga: [0, 1]\rightarrow C^2(\Si_{g_0}, M)$, where $u_t=\ga(t)$ maps $\Si_{g_0}$ to $\Si_t$.
\end{example}

The space of sweep-outs is denoted by (see Definition \ref{variational space}),
$$\Omega=\big\{\ga: \gamma(t) \textrm{ is continuous as a map } [0, 1]\rightarrow C^{0}\cap W^{1,2}(\Sigma_{0}, N)\big\}.$$
Now we can formulate a min-max theory using sweep-outs in $\Om$. Given a homotopy class $[\be]$ in $\Om$, the min-max value, called {\em width}\footnote{See \cite[4.1(3)]{P81}\cite[\S 1.1]{CD} for similar definitions in the geometric measure theory setting.} (see Definition \ref{W and W-E}), is defined by
$$\mathcal{W}=\underset{\rho \in[\be]}{\inf}\underset{t\in[0,1]}{\max}Area\big(\rho(t)\big),$$
where $Area$ is the area functional defined by:
$$Area(u)=\int_{\Si_0}du^{*}(dvol_h)\footnote{$dvol_h$ is the volume form of $(N, h)$.},\quad \textrm{ for } u\in W^{1, 2}(\Si_0, N).$$
We will also use the {\em harmonic energy functional} $E$\footnote{For more other equivalent definitions and properties of $Area$ and $E$, we refer to \cite{J1, SU1, CM3}.}. Let $\al$ be a Riemannian metric on $\Si_0$, then $E$ is define as
$$E(u)=\frac{1}{2}\int_{\Si_0}\|du\|_{\al, h}^2 dvol_\al.$$
$E$ depends only on $u$ and the conformal class of $\al$. Critical point of $E$ is called {\em harmonic map}. Denote $\mathcal{T}_g$ by the Teichm\"uller space on Riemann surface of genus $g$ (see \S \ref{Teichmuller space}.1). It is equivalent to the space of all conformal structures on $\Si_0$ module out the action of isotopy group of $\Si_0$.

Now we can summarize our main theorem as:

\begin{theorem}\label{main theorem}
For any homotopically nontrivial $\beta\in\Omega$, if $\mathcal{W}>0$, there exists a sequence $(\rho_{n}, \tau_{n})$, $\rho_n\in[\beta]$, $\tau_n\in\mathcal{T}_g$, with $\underset{t\in[0,1]}{max}E\big(\rho_{n}(t), \tau_{n}(t)\big)\rightarrow\mathcal{W}$, and for any $\epsilon>0$, there exists a large number $N>0$ and $\delta>0$, such that if $n>N$, then for any $t\in(0, 1)$ satisfying:
\begin{equation}\label{energy condition}
E\big(\rho_{n}(t), \tau_{n}(t)\big)>\mathcal{W}-\delta,
\end{equation}
there are a conformal harmonic map $u_{0}:\overline{\Sigma}_{g}\rightarrow N$ defined on the body $\Sigma^{*}_{g}$ of a genus-$g$ Riemann surface with nodes and possibly finitely many harmonic sphere $u_{i}:S^{2}\rightarrow N$, such that:
\begin{equation}
d_{V}\big(\rho_{n}(t), \underset{i}{\cup}u_{i}\big)\leq\epsilon.
\end{equation}
\end{theorem}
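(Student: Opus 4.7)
The plan is to adapt the pulling-tight plus harmonic replacement scheme of Colding--Minicozzi \cite{CM3} and the author's earlier genus-one work \cite{Z}, but now over the richer moduli space $\mathcal{M}_g$ of genus $g\geq 2$ Riemann surfaces, combined with bubble-tree compactness allowing both sphere bubbles and the formation of nodes in the base surface. Starting from an arbitrary minimizing sequence $(\rho_n,\tau_n)\in[\beta]$ with $\max_t E(\rho_n(t),\tau_n(t))\to\mathcal{W}$, I would construct a homotopic replacement sequence whose ``almost maximal'' slices are almost harmonic and almost conformal. Concretely, for each $t$ at which the energy is close to $\mathcal{W}$, I would run a local harmonic replacement on finitely many small balls that cover a uniformizing atlas on the domain, and simultaneously adjust the conformal class $\tau_n(t)$ using the refined uniformization result promised in the introduction. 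Continuity in $t$ and preservation of the homotopy class $[\beta]$ are enforced by making both the replacement radius and the conformal adjustment depend continuously on~$t$.

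I would then argue by contradiction. If the conclusion failed for some $\epsilon>0$, there would exist a subsequence of times $t_n$ with $E(\rho_n(t_n),\tau_n(t_n))>\mathcal{W}-\delta$ but $d_V\bigl(\rho_n(t_n),\cup_i u_i\bigr)>\epsilon$ for every admissible bubble tree. A quantitative $\epsilon$-regularity/strict-decrease estimate must say that on such a bad slice, a round of harmonic replacement drops the energy by a definite amount $\eta(\epsilon)>0$; localizing this to a small neighborhood of each $t_n$ and splicing back into the tightened path would then strictly lower $\max_t E$, contradicting the minimizing property of the chosen sequence.

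Once pulling-tight succeeds, I would extract the limit along the selected almost-critical slices. The maps are almost harmonic and almost conformal with uniformly bounded energy, so standard bubbling analysis applies: the conformal structures $\tau_n(t_n)$ subconverge in the Deligne--Mumford compactification $\overline{\mathcal{M}}_g$ to a nodal conformal structure on $\Sigma^{*}_g$; the maps converge smoothly on compact subsets of the body $\overline{\Sigma}_g$ to a conformal harmonic map $u_0:\overline{\Sigma}_g\to N$; and the usual concentration analysis extracts finitely many harmonic spheres $u_i:S^2\to N$ at the energy concentration points. An energy identity ruling out neck-energy loss then yields convergence in the varifold-type distance $d_V$ and closes the argument.

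The main obstacle, and the technical heart of the extension to $g\geq 2$, is coupling the harmonic replacement to the moduli parameter when $\tau_n(t)$ approaches the Deligne--Mumford boundary $\partial\mathcal{M}_g$. One must ensure that the replacement can be performed in a way that remains continuous in $t$ across long degenerating collars, and that the strict energy decrease $\eta(\epsilon)$ is quantitatively uniform as the conformal structure approaches $\partial\mathcal{M}_g$ and as sphere bubbles begin to form; a uniform no-neck-energy estimate on collapsing collars, derived from the refined uniformization result, is the key new ingredient not present in the sphere or torus case.
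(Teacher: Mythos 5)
Your proposal follows essentially the same route as the paper: mollify and conformally reparametrize the minimizing sequence via a uniformization result, tighten it by continuous-in-$t$ local harmonic replacement so that almost-maximal slices are almost harmonic and almost conformal, and then run Sacks--Uhlenbeck bubbling combined with degeneration of the hyperbolic structures to a nodal surface, with the collar/neck energy analysis giving the energy identity and hence varifold convergence. The only organizational difference is that the paper proves the tightening estimate directly (via a quantitative comparison of successive harmonic replacements yielding the bound $\int|\nabla\rho-\nabla v|^{2}\leq\Psi(E(\gamma)-E(\rho))$) rather than by your contradiction/splicing formulation, but these are two phrasings of the same mechanism.
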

\noindent Here the definition of Riemann surfaces with nodes is given in \S \ref{degeneration of conformal structures}, and $d_{V}$ means varifold distance given in \cite[Appendix A]{CM3}\footnote{See also \cite[\S 2.1(19)]{P81} for another equivalent formulation.}. The theorem follows from the following Theorem \ref{main theorem 2} and the fact that bubble tree convergence (see \S \ref{further discussion}) with energy identity implies varifold convergence \cite[Appendix A]{CM3}.

\subsection{Further discussion}\label{further discussion}

To illustrate the novelty of our result, we need to state a technical version of our main theorem. For that purpose, we need to introduce the notion of {\em bubble tree convergence of harmonic maps}. Bubble tree convergence of harmonic maps originated from the seminal work of Sacks and Uhlenbeck \cite{SU1, SU2}, where they study the existence of harmonic maps in an arbitrary Riemannian manifold. It was then used a lot in geometric analysis \cite{SiY, MM, QT, Pa, CT} and symplectic geometry \cite{Gr, H, PW}. Roughly speaking, given a sequence of harmonic maps from $\Si_0$ to $(N, h)$ with bounded energy, it will automatically converge (up to a subsequence) to a limiting harmonic map on $\Si_0$ away from finitely many energy concentration points. If we rescale the domain near those points, the blow-up sequence will converge to a harmonic map defining on the sphere. Such process can be iterated and will terminate after finitely many steps. The limit will be a tree of harmonic maps. We refer to \cite{Pa} and \cite[Appendix A]{CM} and the proof of Theorem \ref{convergence} for more detailed description of bubble tree convergence.

We also need to use the notion of hyperbolic representation of Teichm\"uler spaces $\mathcal{T}_g$. Denote a triple $(\Si, h, j)$ by a Riemann surface $\Si$ with genus $g\geq 2$, together with a hyperbolic metric $h$ and a compatible complex structure $j$. $\mathcal{T}_g$ can be represented as the space of all such triples $(\Si, h, j)$ module out the isotopic isomorphism group (see \S \ref{degeneration of conformal structures} for more detailed description).

An equivalent version of our main result can be stated as follows: Let $\{\rho_n(t), \tau_n(t)\}$ be as in Theorem \ref{main theorem},
\begin{theorem}\label{main theorem 2}
 For all sequences $\{t_n: t_n\in (0, 1)\}_{n\in\N}$, with $\lim_{n\rightarrow\infty}E\big(\rho_{n}(t_{n}), \tau_{n}(t_{n})\big)=\mathcal{W}$, $\{\rho_n(t_n), \tau_n(t_n)\}$ will converge in the following way:
\begin{itemize}
\vspace{-5pt}
\addtolength{\itemsep}{-0.7em}

\item There exists a sequence $(\Sigma_{n}, h_{n}, j_{n})\in \tau_n(t_n)$, which converge to a hyperbolic Riemann surface with nodes $(\Sigma^{*}_{\infty}, h_{\infty}, j_{\infty})$ (see Definition \ref{convergence of Riemann surfaces}). Let $\overline{\Sigma}_{\infty}$ be the one point compactification of $\Sigma_{\infty}$, then there exist a conformal harmonic map $u_{0}:\big(\overline{\Sigma}_{\infty}, j_{\infty}\big)\rightarrow N$ and some harmonic spheres $\{u_{i}:S^{2}\rightarrow N|\ i=1, \cdots, l\}$, such that $\big(\rho_{n}(t_{n}), (\Sigma_{n}, h_{n}, j_{n})\big)$ bubble converge to a tree $\big(u_{0}, u_{1}, \ldots, u_{l}\big)$, with energy identity:
\begin{equation}\label{energy identity 0}
\underset{n\rightarrow\infty}{\lim}E\big(\rho_{n}(t_{n}), j_{n}\big)=E(u_{0}, j_{\infty})+\underset{i}{\sum}E(u_{i}).
\end{equation}
\end{itemize}

\end{theorem}

The novelty of the main theorem lies on two folds.  First, our result corresponds to a strong mountain pass type lemma in the non-linear analysis \cite[Chap \Rom{2}]{St}. Roughly speaking, in our min-max theory, we find an approximates sequence of sweep-outs $\{\rho_n: [0, 1]\times(\Si_0, \tau_n)\rightarrow N\}_{n\in\N}$, such that every min-max sequence, i.e. $\{(\rho_n(t_n), \tau_n(t_n))\}$ with $\lim_{n\rightarrow\infty}E\big(\rho_{n}(t_n), \tau_{n}(t_n)\big)$ $=\mathcal{W}$, will sub-converge to a bubble tree of branched minimal surfaces. This is a special feature compared to all other versions of min-max theory \cite{P81, CD, J1}, where they can only show the convergence for some special min-max sequence.

The second novelty lies on the energy identity (\ref{energy identity 0}). The possible loss of energy during the bubble tree convergence has attracted a lot of interests during the past thirty years. The energy identity, equivalent to no loss of energy, has played an important role in the study of geometric analysis \cite{J1, Pa, QT, CT}, complex geometry \cite{SiY} and symplectic geometry \cite{PW}. These known results either only deal with the minimizing case \cite{SiY, CT}, or assume some other technical conditions \cite{J1, Pa, PW, QT}. Especially, for bubble tree convergence of harmonic maps defined on $\{(\Si_0, j_n)\}$ with varying conformal structures $\{j_n\}$, \cite{Pa} points out that the energy identity can be false in general. As the second special feature of our result, the energy identity automatically holds during the bubble tree convergence of any min-max sequences defined on surfaces with varying conformal structures.\\

The main difficulty of our theory is due to the complexity of the conformal structures on genus $g\geq 2$ surfaces. We use a variational method analogous to the Plateau Problem. More precisely, we start by taking an arbitrary minimizing sequence of sweep-outs, then we reparametrize to make them almost conformal, and finally we do local perturbation to make them almost compact under the $C^0\cap W^{1, 2}$ topology. The conformal reparametrization uses many features of the Teichm\"uller theory, together with the a priori estimates developed by the author in \cite{Z}. Various representations of the Teichm\"uller space are entangled in the proof. The local perturbation is a delicate adaption of Colding-Minicozzi's local harmonic replacement process \cite[\S 3]{CM3} (see also \cite{Z}), while in our case the possibility of degeneration of conformal structures are much more complicated than \cite{CM3, Z}.


The organization of the paper is as follows. In \S \ref{Sketch of the variational methods}, we review various definitions and properties of Teichm\"uller spaces on a genus $g\geq 2$ surface, and then sketch the variational method. In \S \ref{Conformal parametrization}, we recall the properties of quasi-conformal maps \cite{AB} and quasi-linear quasi-conformal maps \cite[Appendix]{Z}, and prove a strong uniformization result on genus $g\geq 2$ surfaces. In \S \ref{compactification for mappings}, we develop a new version of Colding-Minicozzi's harmonic replacement process \cite[\S 3]{CM3} on genus $g\geq 2$ hyperbolic surfaces. In \S \ref{Convergence results}, we adapt the bubble tree convergence to our setting and finish the whole proof.\\

\noindent\textbf{Acknowledgement:} The author would like to express his gratitude to his advisor Professor Richard Schoen for all of his helpful guidance and constant encouragement. He would like to thank Professor Steven Kerckhoff for teaching him the Teichm\"uller theory. He would also like to thank Professor Gang Tian for his interest in this work.


\section{Sketch of the variational methods}\label{Sketch of the variational methods}

Now let us first recall the approach used by the author in \cite{Z}. In this method, we consider the area functional and energy functional simultaneously. Let $(N, h)$ be the target manifold. Consider the space of sweep-outs $\Omega=\big\{ \gamma(t)\in C^{0}\big([0,1], C^{0}\cap W^{1,2}(T^{2}_{0}, N) \big) \big\}$, where a sweep-out is a one parameter family of mappings $\gamma(t)$ from a torus $T^2_0$ to the target manifold $N$, which satisfy certain degeneration constraints, i.e. $\gamma(0),\ \gamma(1)$ are constant maps or maps to closed curves in $N$. We can define a min-max value $\mathcal{W}=\underset{\rho\in[\beta]}{inf}\underset{t\in[0,1]}{max}Area\big(\rho(t)\big)$ for a homotopy class $[\beta(t)]\subset\Omega$. Suppose that $\mathcal{W}>0$. A natural question is how to find the corresponding critical points. We used classical two dimensional geometric variational methods to find the critical points. First, take an area minimizing sequence of sweep-outs $\tilde{\gamma}_{n}(t)\in\big[\rho\big]$, such that $\lim_{n\rightarrow\infty}\underset{t\in[0,1]}{max}Area\big(\tilde{\gamma}_{n}(t)\big)=\mathcal{W}$. Then we need to change gear to the energy functional $E$. Since energy functional depends not only on the mappings, but also on the conformal structures of the domain, we need to module out the action of conformal group. We consider the following min-max value\footnote{See \cite{Z} for details of the notations.} $\mathcal{W}_{E}=\underset{(\rho, \tau)\in[(\beta, \tau_{0})]}{inf}\quad\underset{t\in[0,1]}{max}E\big(\rho(t), \tau(t)\big)$. In fact, $\mathcal{W}_{E}=\mathcal{W}$ \cite[\S 3]{Z}. In order to module out conformal group action, we need to do reparametrizations on the torus. Let $\tilde{g}_{n}(t)=\tilde{\gamma}_{n}(t)^{*}h$ be the pullback of the ambient metric, which may be degenerate. Using a uniformization result proved in \cite{Z} and a perturbation technique, $\tilde{g}_{n}(t)$ determines a continuous family of elements $\tau_{n}(t)$ in the Teichm\"uller space $\mathcal{T}_{1}$ of torus and a continuous isotopic family of diffeomorphism $h_{n}(t): \big(T^{2}, \tau_{n}(t)\big)\rightarrow\big(T^{2}, \tilde{g}_{n}(t)\big)$, such that if denoting $\gamma_{n}(t)=\tilde{\gamma}_{n}(t)\circ h_{n}(t)$, $\lim_{n\rightarrow}\big[E\big(\gamma_{n}(t), \tau_{n}(t)\big)-Area(\gamma_{n}(t))\big]\rightarrow 0$. After that, we perturb the sequences $\gamma_{n}(t)$ by a modified Colding-Minicozzi's harmonic replacement process \cite[\S 3]{CM3} to a new sequence $\rho_n(t)$ with $\rho_n\in [\ga_n]$, such that $\{\rho_n(t)\}$ satisfy certain compactness property in $C^0\cap W^{1, 2}$ topology. Lastly, we combine the degeneration of conformal structures with the bubble tree convergence to give a combined bubble convergence for the new sequence $\{\rho_n(t): (T^2, \tau_n(t))\rightarrow N\}$ \cite[Theorem 5.1]{Z}. In the limit, we get a bubble tree consisting of a conformal harmonic map from torus together with finitely many harmonic spheres. We also get the energy identity \cite[(45)(46)]{Z}. In fact, we will achieve a strong mountain pass type lemma for $\{\rho_n(t)\}$ \cite[Theorem 1.1]{Z}.

Based on this method, let us describe the approach to high genus cases. 


\subsection{Teichm\"uller spaces of genus $g$ surfaces}\label{Teichmuller space}

Before going into the variational method, let us first review various definitions and properties of the Teichm\"uller spaces $\mathcal{T}_{g}$ and moduli spaces $\mathcal{M}_{g}$ on a genus $g$ surface $\Sigma_{0}$. We will summarize the following facts about $\mathcal{T}_g$ and $\mathcal{M}_g$.
\vspace{-6pt}
\begin{itemize}
\setlength{\itemindent}{1em}
\addtolength{\itemsep}{-0.7em}
\item[$1^{\circ}:$] Definition about Teichm\"uller spaces and Moduli spaces;
\item[$2^{\circ}:$] Marked surface representation of Teichm\"uller spaces;
\item[$3^{\circ}:$] Fuchsian model description for Teichm\"uller spaces;
\item[$4^{\circ}:$] Quasi-conformal maps;
\item[$5^{\circ}:$] Teichm\"uller mappings;
\end{itemize}

$\textbf{1}^{\circ}.$ Denote $\Met_{g}$ by the space of all the Riemannian metrics on a topological surface $\Sigma_0$ of genus $g\geq 2$. Denote $\Diff(\Sigma_0)$ by the orientation-preserving self diffeomorphism groups on $\Sigma_0$, and $\Diff_{0}(\Sigma_0)$ the subgroup of $\Diff(\Sigma_0)$ containing elements isotopic to the identity. Two metrics $ds^{2}$ and $(ds^{2})^{\prime}$ are said to be equivalent in the sense of moduli space, if there exists $w\in \Diff(\Sigma_0)$, such that $w^{*}(ds^{2})^{\prime}$ is conformal to $ds^{2}$. Define all the equivalent classes to be the \emph{moduli space} $\mathcal{M}_g=\Met_{g}/\Diff(\Sigma_0)$. Two metrics $ds^{2}$ and $(ds^{2})^{\prime}$ are said to be equivalent in the sense of Teichm\"uller space, if there exists $w\in \Diff_{0}(\Sigma_0)$, such that $w^{*}(ds^{2})^{\prime}$ is conformal to $ds^{2}$. Define all the equivalent classes to be the \emph{Teichm\"uller space} $\mathcal{T}_{g}=\Met_{g}/\Diff_{0}(\Sigma_0)$. We are also interested in the complex structure of the surfaces. Each $(\Sigma_0, ds^{2})$ automatically has a complex structure compatible with $ds^{2}$ \cite[\S 1.5.1]{IT}. Later on, we will use this complex structure without mentioning it.

\vspace{0.5em}
$\textbf{2}^{\circ}.$ Here we recall the representation of Teichm\"uller spaces by the marked surfaces. We use the description in \cite{IT}. Given a fixed genus $g$-surface $\Sigma_{0}$, consider all the surfaces $(\Sigma, f)$, where $f: \Sigma_{0}\rightarrow\Sigma$ is an orientation-preserving diffeomorphism. We say that $(\Sigma, f)$ and $(\Sigma^{\prime}, g)$ are equivalent in the sense of Teichm\"uller space, if $g\circ f^{-1}: \Sigma\rightarrow\Sigma^{\prime}$ is homotopic to a conformal diffeomorphism from $\Sigma$ to $\Sigma^{\prime}$. We call such a $f$ a \emph{marking}, and $(\Sigma, f)$ a \emph{marked surface}. The set of all equivalent classes of marked surfaces $\big\{\big[(\Sigma, f)\big]\big\}$ is another representation of the Teichm\"uller spaces $\mathcal{T}_{g}$ of genus $g$ \cite[Chap 1]{IT}.

\vspace{0.5em}
$\textbf{3}^{\circ}.$ Let us talk about the Fuchsian model now. By the Uniformization Theorem in complex analysis, all the closed surfaces $\Sigma_{g}$ with genus $g\geq 1$ have their universal covering space the upper half plane $\mathbb{H}$. The covering transformation group of $\pi: \mathbb{H}\rightarrow\Sigma_{g}$ is called \emph{Fuchsian group}, which will be denoted by $\Gamma$, and $(\Sigma_{g}, \Gamma)$ is called \emph{Fuchsian model}. Usually, we also simply call $\Gamma$ a Fuchsian model. In the sense of complex analysis, the holomorphic diffeomorphism group of $\mathbb{H}$ is $\PSL(2, \mathbb{R})$, so $\Gamma$ contains only linear fractional transformations with real coefficients, i,e, $\Gamma\subset \PSL(2, \mathbb{R})$. If we consider the hyperbolic metric structure $(\mathbb{H}, ds_{-1}^{2})$, where $ds_{-1}^{2}=\frac{dx^{2}+dy^{2}}{y^{2}}$, $\Gamma$ is constituted by isometries of $(\mathbb{H}, ds_{-1}^{2})$.

Using normalized Fuchsian models, we can introduce a natural topology on $\mathcal{T}_{g}$. Given a Fuchsian model $(\Si, \Ga)$, by \cite[\S 2.5]{IT}, after conjugating in $\PSL(2, \R)$, there is a set of normalized generators $\{\al_{i}, \be_{i}\}^{g}_{i=1}$ for $\Ga$, where $\al_{g}$ has attractive fixed point at $1$ and $\be_{g}$ has repelling and attractive fixed point at $0$ and $\infty$ respectively. Moreover, this set of generators is uniquely determined by the equivalent class in $\mathcal{T}_{g}$. By \cite[\S 2.5]{IT}, $\al_{i}$, $\be_{i}$ can be uniquely written as $\al_{i}=\frac{a_{i}z+b_{i}}{c_{i}z+d_{i}}$, $a_{i}, b_{i}, c_{i}\in\R$, $c_{i}>0$, $a_{i}d_{i}-b_{i}c_{i}=1$, and $\be_{i}=\frac{a^{\pr}_{i}z+b^{\pr}_{i}}{c^{\pr}_{i}z+d^{\pr}_{i}}$, $a^{\pr}_{i}, b^{\pr}_{i}, c^{\pr}_{i}\in\R$, $c^{\pr}_{i}>0$, $a^{\pr}_{i}d^{\pr}_{i}-b^{\pr}_{i}c^{\pr}_{i}=1$, for $j=1,\cdots, g-1$. Hence we can define the \emph{Fricke coordinates}: $\mathcal{F}_{g}: \mathcal{T}_{g}\rightarrow\R^{6g-6}$ as $\mathcal{F}_{g}\big([\Si, f]\big)=(a_{i}, c_{i}, d_{i}, a_{i}^{\pr}, c_{i}^{\pr}, d_{i}^{\pr})_{i=1}^{g-1}$. By \cite[Theorem 2.25]{IT}, $\mathcal{F}_{g}$ is injective. Hence we have an induced topology on $\mathcal{T}_{g}$ by the Fricke coordinates.

\vspace{0.5em}
$\textbf{4}^{\circ}.$ We also need the notion of quasi-conformal maps. Let $f: \Sigma\rightarrow\Sigma^{\prime}$ be an orientation-preserving diffeomorphism between two Riemann surfaces. Given local complex coordinates $(z, \bar{z})$, $(w, \bar{w})$ on $\Sigma$ and $\Sigma^{\prime}$ respectively. Denote $f(z)=w\circ f\circ z$. The {\em Beltrami coefficient} is defined by
\begin{equation}\label{Beltrami coefficient}
\mu=\frac{f_{\bar{z}}}{f_{z}}.
\end{equation}
It is easy to see that $|\mu|$ does not depend on the local complex coordinates. If $|\mu|\leq k<1$, then we call such $f$ a \emph{quasi-conformal map}\footnote{When $|\mu|=0$, f is holomorphic.}.

Now let us combine the marked surface model with the quasi-conformal maps (see \cite[\S 5.1.2]{IT}). Let $\Sigma_{0}$ be a fixed Riemann surface, with a Fuchsian group $\Gamma_{0}$. After some conjugation in $\PSL(2, \mathbb{R})$, we can always assume that $(0, 1, \infty)$ are fixed by some elements in $\Gamma_{0}\setminus\{id\}$ \cite[\S 5.1.2]{IT}. We call such $\Gamma_{0}$ a \emph{normalized Fuchsian group}, and $(\Sigma_{0}, \Gamma_{0})$ a \emph{normalized Fuchsian model}. For any marked surface $(\Sigma, f)$, $f: \Sigma_{0}\rightarrow\Sigma$ is always a quasi-conformal map \cite[(1.4.2)]{IT}. Now we lift the quasi-conformal map $f$ up to the upper half space $\mathbb{H}$ by the covering maps $\pi_{0}: \mathbb{H}\rightarrow\Sigma_{0}$ and $\pi: \mathbb{H}\rightarrow\Sigma$ to get $\tilde{f}: \mathbb{H}\rightarrow\mathbb{H}$. After some $\PSL(2, \mathbb{R})$ action on the target $\mathbb{H}$, we can assume that $\tilde{f}$ also fixes the three points $(0, 1, \infty)$ (the uniqueness of such quasi-conformal maps is given in \cite[Proposition 4.33]{IT} and discussions in Proposition \ref{results about quasi-conformal maps}). We call such maps $\tilde{f}:\mathbb{H}\rightarrow\mathbb{H}$ \emph{canonical quasi-conformal maps}. By pushing over the normalized Fuchsian group $\Gamma_{0}$ on $\Sigma_{0}$ by $\tilde{f}$, we get another Fuchsian group $\Gamma_{\tilde{f}}=\tilde{f}\circ\Gamma_{0}\circ\tilde{f}^{-1}$, such that $\Sigma=\mathbb{H}/\Gamma_{\tilde{f}}$. Now for such a marking $f$, we can define an injective homeomorphism:
$$\theta_{\tilde{f}}: \Gamma_{0}\rightarrow PSL(2, \mathbb{R}),$$
where $\theta_{\tilde{f}}(\gamma)=\tilde{f}\circ\gamma\circ\tilde{f}^{-1}$, $\ga\in\Ga_0$. \cite[Lemma 5.1]{IT} showes that $(\Sigma_{1}, f_{1})$ and $(\Sigma_{2}, f_{2})$ are equivalent in the sense of Teichm\"uller space, if and only if $\theta_{\tilde{f}_{1}}=\theta_{\tilde{f}_{2}}$. Now we can define the following set:
\begin{equation}
\begin{split}
\mathcal{T}^{\sharp}_{g}= \big\{ & \theta_{\tilde{f}}:\ \tilde{f}\ \textrm{is a canonical quasiconformal map, such that}\\
                                 & \theta_{\tilde{f}}(\Gamma_{0})\ \textrm{is a Fuchsian group for some genus-$g$ surface}.\big\}\\
\end{split}
\end{equation}
\cite[Proposition 5.3]{IT} shows that $\mathcal{T}^{\sharp}_{g}$ is identified with the Teichm\"uller space $\mathcal{T}_{g}$. Later on, we will use this representation of the Teichm\"uller space $\mathcal{T}_g$, and we will extend the quasi-conformal maps to more general settings, say, in the Sobolev spaces.

\vspace{0.5em}
$\textbf{5}^{\circ}.$ We also need to introduce the Teichm\"uller mapping in a class of marked surfaces $[(\Sigma, f)]$, where $f:\Sigma_{0}\rightarrow\Sigma$ is an orientation-preserving diffeomorphism, hence is also a quasi-conformal map. By \cite[Theorem 5.9]{IT}, there exists a unique a holomorphic quadratic differential $\phi$ on $\Sigma_{0}$ with $||\phi||_{1}<1$\footnote{Here $||\phi||_{1}$ is the $L^{1}$-norm of $\phi$.}, and a unique quasi-conformal mapping $f_{1}:\Sigma_{0}\rightarrow\Sigma$ homotopic to $f$, such that the Beltrami coefficient $\mu_{f_{1}}$ (\ref{Beltrami coefficient}) of $f_{1}$ satisfies $\mu_{f_{1}}=\mu_{\phi}$, where
\begin{equation}
\mu_{\phi}\equiv||\phi||_{1}\frac{\bar{\phi}}{|\phi|}.
\end{equation}
We denote such a map by $f_{\phi}$ and call it \emph{Teichm\"uller mapping} \cite[\S 5.2.2]{IT}.

Denote the set of all holomorphic quadratic differentials on $\Sigma_{0}$ with $L^1$-norm $\|\cdot\|_{1}$ strictly less than one by $A_{2}(\Sigma_{0})_{1}$.
From \cite[Theorem 5.15]{IT}, we know that the mapping
$$\mathcal{F}: A_{2}(\Sigma_{0})_{1}\rightarrow\mathcal{T}_{g},$$
defined by $\mathcal{F}(\phi)=[(f_{\phi}(\Si_{0}), f_{\phi})]$, $\phi\in A_{2}(\Si_{0})_{1}$, is a homeomorphism, where $f_{\phi}$ is the unique Teichm\"uller mapping of the Beltrami coefficient $\mu_{\phi}$ in the class $[(f_{\phi}(\Si_{0}), f_{\phi})]$ of marked surfaces\footnote{The existence of $f_{\phi}$ can also be seen from the construction in \cite[\S 4.2]{IT}.}.
By the Riemann-Roch theorem, we know that $A_{2}(\Sigma_{0})_{1}$ is homotopic to a $(6g-6)$-dimensional Euclidean ball, hence is $\mathcal{T}_{g}$ and $\mathcal{T}^{\sharp}_{g}$. Later on, the topology on $\mathcal{T}_{g}$ and $\mathcal{T}^{\sharp}_{g}$ is identified with the topology on $A_{2}(\Sigma_{0})_{1}$.


\subsection{Some notations}\label{notations}

Now let us set down the framework of the variational method. Given a Riemannian manifold $(N, h)$. Let $\Sigma_{0}$ be a fixed Riemann surface of genus $g\geq 2$ with a normalized Fuchsian group $\Gamma_{0}$. Denote elements in the Teichm\"uller space $\mathcal{T}_{g}$ by $\tau$. Let $\phi_{\tau}\in A_{2}(\Sigma_{0})_{1}$ be the unique holomorphic quadratic differential on $\Sigma_{0}$ corresponding to $\tau$. Denote $f_{\tau}=f_{\phi_{\tau}}$ by the unique Teichm\"uller mapping determined by the Beltrami coefficient $\mu_{\phi_{\tau}}$ (\S \ref{Teichmuller space}.$\textbf{5}^{\circ}$), and $\tilde{f}_{\tau}:\mathbb{H}\rightarrow\mathbb{H}$ the unique canonical quasi-conformal mapping lifted up with respect to $\Gamma_{0}$. By \S \ref{Teichmuller space}.$\textbf{4}^{\circ}$, we can view $\tau$ as an equivalent class of marked surfaces $[(\Sigma_{\tau}, f_{\tau}\})]$ with normalized Fuchsian group $\Gamma_{\tau}=\theta_{\tilde{f}_{\tau}}(\Gamma_{0})$, i.e. $\Sigma_{\tau}=\mathbb{H}/\Gamma_{\tau}$.
\begin{definition}\label{variational space}
The variational spaces are defined as
\begin{equation}\label{Omega}
\Omega=\big\{\gamma(t)\in C^{0}\big([0, 1], C^{0}\cap W^{1,2}(\Sigma_{0}, N)\big)\big\},
\end{equation}
and
\begin{equation}\label{tilde{Omega}}
\tilde{\Omega}=\big\{(\gamma(t), \tau(t)):\ \gamma(t)\in C^{0}\big([0, 1], C^{0}\cap W^{1, 2}(\Sigma_{\tau(t)}, N)\big),\ \tau(t)\in C^{0}([0, 1], \mathcal{T}_{g})\big\},
\end{equation}
where $(\Sigma_{\tau}=\mathbb{H}/\Gamma_{\tau}, \Gamma_{\tau})$ is the normalized Fuchsian model corresponding to $\tau\in\mathcal{T}_{g}$. We always assume that the boundary $\gamma(0)$ and $\gamma(1)$ are mapped onto close curves in $N$.
\end{definition}

Now let us discuss the continuity of $\gamma(t)\in C^{0}\big([0, 1], C^{0}\cap W^{1, 2}(\Sigma_{\tau(t)}, N)\big)$. Here we can view all the $\gamma(t)$ as been defined on the upper half plane $\mathbb{H}$ lifted up by $\pi_{\tau(t)}:\mathbb{H}\rightarrow\Sigma_{\tau(t)}$, with the Fuchsian groups $\Gamma_{\tau(t)}$ varying continuously w.r.t.\footnote{Abbreviated for ``with respect to".} the parameter $t$. The continuity of $\gamma(t)$ w.r.t. $t$ can be defined as mappings on compact subsets $K$ of $\mathbb{H}$ with the Poincar\'e metric, i.e. $\gamma(t)\in C^{0}\big([0, 1], C^{0}\cap W^{1, 2}(K, N)\big)$. Another equivalent way to understand this is as follows. Let $\phi_{\tau(t)}$ be the holomorphic quadratic differentials corresponding to $\tau(t)$. The fact that $\tau(t)$ vary continuously w.r.t. $t$ is equivalent to that $\phi_{\tau(t)}$ vary continuously w.r.t. $t$ in $A_{2}(\Sigma_{0})_{1}$. Let $f_{\tau(t)}$ be the Teichm\"uller mappings corresponding to $\phi_{\tau(t)}$, then the canonical lift $\tilde{f}_{\tau(t)}:\mathbb{H}\rightarrow\mathbb{H}$ change continuously in $C_{loc}^{0}\cap W^{1, 2}(\mathbb{H}, \mathbb{H})$ by properties of quasi-conformal mapping\footnote{See \cite[Chap 4]{IT} and \S \ref{results about quasi-conformal maps}. Moreover, by \cite[Proposition 5.19]{IT}, $f_{\tau}$ is smooth away from zeros of $\phi_{\tau}$, and vary continuously in any $C^{k}$-norm w.r.t. $\tau$; also $f_{\tau}$ is uniformly Lipchitz when $\|\phi_{\tau}\|_1\leq k<1$.}. Using $f_{\tau(t)}$ as special markings for a continuous family of elements in $\mathcal{T}_{g}$, we can pull the path $\gamma(t): \Sigma_{\tau(t)}\rightarrow N$ back to $\Sigma_{0}$, i.e. $f_{\tau(t)}^{*}(\gamma(t))=\gamma(t)\circ f_{\tau(t)}:\Sigma_{0}\rightarrow N$. The continuity of $\gamma(t)$ w.r.t. $t$ is defined as the continuity of the path $f_{\tau(t)}^{*}\gamma(t)$ w.r.t. $t$ on the same surface $\Sigma_{0}$.

Next let us talk about the homotopy equivalence in $\tilde{\Omega}$. Consider two elements $\big\{(\gamma_{i}(t), \tau_{i}(t)):\ i=1, 2\big\}$. They have different domains $\Sigma_{\tau_{i}(t)},\ i=1, 2$ given by normalized Fuchsian models $\Gamma_{\tau_{i}(t)}$. As above, we use Teichm\"uller mappings $f_{\phi_{\tau_{i}(t)}}:\Sigma_{0}\rightarrow\Sigma_{\tau_{i}(t)},\ i=1, 2$ to identify $\Sigma_{\tau_{i}(t)},\ i=1, 2$ with $\Sigma_{0}$, where $\phi_{\tau_{i}(t)}$ are the holomorphic quadratic differentials corresponding to $\tau_{i}(t)$, $i=1, 2$. Since $\mathcal{T}_{g}$ is homotopic to a ball, $\tau_{1}(t)$ and $\tau_{2}(t)$ are always homotopic to each other. Hence we say that $\big\{(\gamma_{1}(t), \tau_{1}(t))\big\}$ is homotopic to $\big\{(\gamma_{2}(t), \tau_{2}(t)\big\}$ if $f_{\phi_{\tau_{1}(t)}}^{*}\gamma_{1}(t)$ is homotopic to $f_{\phi_{\tau_{2}(t)}}^{*}\gamma_{2}(t)$.

\begin{definition}\label{W and W-E}
Fix a homotopy class $[\beta]\subset\Omega$, and $\tau_{0}$ a fixed element in $\mathcal{T}_{g}$ given by $[(\Sigma_{0}, id)]$. For area functional, define
\begin{equation}\label{W}
\mathcal{W}=\underset{\rho\in[\beta]}{\inf}\underset{t\in[0,1]}{\max}Area\big(\rho(t)\big).
\end{equation}
For energy functional, define
\begin{equation}\label{W-E}
\mathcal{W}_{E}=\underset{(\rho, \tau)\in[(\beta, \tau_{0})]}{\inf}\underset{t\in[0,1]}{\max}E\big(\rho(t), \tau(t)\big).
\end{equation}
\end{definition}

\begin{remark}
Later, we will show that $\mathcal{W}=\mathcal{W}_{E}$ in Remark \ref{energy equivalent to area}. We will mainly focus on the case when $\mathcal{W}>0$.
\end{remark}


\subsection{Sketch of the variational method}\label{sketch of variational method}

Now a natural question is to find the critical points corresponding to $\mathcal{W}$. In fact, the critical points are achieved by some conformal harmonic mappings from surfaces degenerated from $\Sigma_{0}$ together with finitely many harmonic spheres. To achieve the critical points, we use the geometric variational method. We take a minimizing sequence $\big\{\tilde{\gamma}_{n}(t)\big\}_{n\in\N}\subset[\beta]\subset\Omega$, such that
$$\lim_{n\rightarrow\infty}\underset{t\in[0,1]}{max}Area\big(\tilde{\gamma}_{n}(t)\big)=\mathcal{W}.$$
In fact, by the standard mollification method \cite[\S D.1]{CM3}\cite[\S 4]{ScU}, we can assume that $\tilde{\gamma}_{n}(t)$ vary continuously in $C^{2}$-class, i,e. $\tilde{\gamma}_{n}(t)\in C^{0}\big([0, 1], C^{2}(\Sigma_{0}, N)\big)$.

Then we would like to change to use the variational method of the energy functional $E$ and hence work in $\tilde{\Omega}$. The variational method consists of the following three steps. \textbf{First}, we do almost conformal reparametrizations to module out the conformal group action. Pull back the ambient metric $\tilde{g}_{n}(t)=\tilde{\gamma}_{n}(t)^{*}h$. We want to show that $\tilde{g}_{n}(t)$, which may be degenerate, determine a family of elements $\tau_{n}(t)\in\mathcal{T}_{g}$. Suppose that the corresponding normalized Fuchsian model and Teichm\"uller mappings of $\tau_n(t)$ are $(\Sigma_{\tau_{n}(t)}, \Gamma_{\tau_{n}(t)}, f_{\tau_{n}(t)})$, where $\Gamma_{\tau_{n}(t)}=\theta_{\tilde{f}_{\tau_{n}(t)}}(\Gamma_{0})$ and $\Sigma_{\tau_{n}(t)}=\mathbb{H}/\Gamma_{\tau_{n}(t)}$. We want to find almost conformal parametrizations $h_{n}(t): \Sigma_{\tau_{n}(t)}\rightarrow (\Sigma_{0}, \tilde{g}_{n}(t))$, such that the reparametrization $\big(\gamma_{n}(t), \tau_{n}(t)\big)=\Big(\tilde{\gamma}_{n}\big(h_{n}(t), t\big), \tau_{n}(t)\Big)\in\big[\big(\tilde{\gamma}_{n}(t), \tau_{0}\big)\big]$ have energy close to area, i.e. $E\big(\gamma_{n}(t), \tau_{n}(t)\big)-Area\big(\gamma_{n}(t)\big)\rightarrow 0$ as $n\rightarrow\infty$. \textbf{Second}, we do compactification by deforming $\gamma_{n}(t)$ to $\rho_{n}(t)$. We will adapt the local harmonic replacement method developed by Colding and Minicozzi \cite{CM3, Z} to the hyperbolic surfaces. We make $\rho_{n}(t)$ to be almost harmonic mappings, so as to get bubble tree compactness as in \cite{SU1, CM3, Z}. \textbf{Finally}, we discuss the degenerations of conformal structures of $\tau_{n}(t)$. We will show that $\big(\rho_{n}(t), \tau_{n}(t)\big)$ bubble tree converge to certain conformal harmonic mappings defined on surfaces degenerated from $\Sigma_{0}$ together with some harmonic spheres, and we will prove the energy identity, hence show that the sum of the area\footnote{The area equals to the energy since the final targets are all conformal.} equals to $\mathcal{W}$ (\ref{W}).

\vspace{0.5em}
In the following sections, we will discuss the three steps in details.


\section{Conformal parametrization in the high genus case}\label{Conformal parametrization}

In this section, we will do almost conformal re-parametrization for the minimizing sequence $\{\tilde{\gamma}_{n}(t)\}_{n\in\N}\subset\Omega$. We can assume that $\{\tilde{\gamma}_{n}(t)\}$ have better regularity.

\begin{lemma}\label{mollifying}
(\cite[Lemma D.1]{CM3}, \cite[Lemma 3.1]{Z}) Suppose $\tilde{\gamma}_{n}(t)$ are chosen as in the above section, we can perturb them to
get a new minimizing sequence in the same homotopy class $[\beta]$, such that (denoting them still as $\tilde{\gamma}_{n}(t)$), $\tilde{\gamma}_{n}(t)\in C^{0} \Big([0,1],C^{2}(\Sigma_{0}, N)\Big)$.
\end{lemma}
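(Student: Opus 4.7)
The plan is a standard mollification-plus-projection argument, carried out uniformly in the parameter $t$ and respecting the boundary data.

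First I would embed the target isometrically, $N \hookrightarrow \mathbb{R}^K$ (Nash), and fix a tubular neighborhood $U \supset N$ on which the nearest-point projection $\pi\colon U \to N$ is smooth with bounded derivatives. This reduces the problem to mollifying $\mathbb{R}^K$-valued maps and then projecting the result back to $N$. Because each $\tilde{\gamma}_n(t)$ takes values in the compact set $N$, small $C^0$ perturbations will stay inside $U$, so the projection step is legitimate.

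Second, for each $n$ I would pick a small scale $\epsilon_n \to 0$ and convolve $\tilde{\gamma}_n$ with a smooth kernel in both variables: a kernel $\eta_{\epsilon_n}(t)$ in the time direction (after extending $\tilde{\gamma}_n$ to a slightly larger time interval using the fact that $\tilde{\gamma}_n(0)$ and $\tilde{\gamma}_n(1)$ map onto closed curves, e.g. by a constant extension in a tubular collar of the endpoints), together with a spatial mollification on $\Sigma_0$ using a partition of unity and local coordinates. The resulting smoothed map $\tilde{\gamma}_n^{\epsilon_n}$ lies in $C^0([0,1], C^2(\Sigma_0, \mathbb{R}^K))$, and I set the new sequence to be $\pi \circ \tilde{\gamma}_n^{\epsilon_n}$. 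Standard mollifier estimates give $\tilde{\gamma}_n^{\epsilon_n} \to \tilde{\gamma}_n$ in $C^0([0,1], C^0 \cap W^{1,2})$ as $\epsilon_n \to 0$, and composing with $\pi$ (Lipschitz with bounded derivatives) preserves this convergence. Consequently the area functional, which is continuous on $W^{1,2}$ maps into $N$ uniformly in $t$ after controlling $\|d(\pi \circ \tilde{\gamma}_n^{\epsilon_n})\|_{L^2}$ by $\|d\tilde{\gamma}_n^{\epsilon_n}\|_{L^2}$, satisfies
\begin{equation*}
\max_{t \in [0,1]} \bigl| \mathrm{Area}(\pi \circ \tilde{\gamma}_n^{\epsilon_n}(t)) - \mathrm{Area}(\tilde{\gamma}_n(t)) \bigr| \to 0,
\end{equation*}
so the new sequence is still minimizing for $\mathcal{W}$.

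Third, I would verify that the perturbed path stays in $[\beta]$. Because the linear homotopy $s \mapsto \pi\bigl(s\, \tilde{\gamma}_n^{\epsilon_n} + (1-s)\, \tilde{\gamma}_n\bigr)$, $s \in [0,1]$, is well-defined for $\epsilon_n$ small (the image remains in $U$) and gives a continuous family in $\Omega$ connecting the original path to the smoothed one, the homotopy class is preserved. The boundary conditions require a little extra care: since $\tilde{\gamma}_n(0)$ and $\tilde{\gamma}_n(1)$ map into closed curves in $N$, the temporal extension must be chosen so that the mollified maps at $t=0,1$ still factor through those curves — this is arranged by extending $\tilde{\gamma}_n$ past the endpoints by the corresponding boundary map itself, so convolution in $t$ near the endpoints returns exactly that boundary map (or a reparametrization of it within the same curve), up to a projection that preserves the property of mapping into a closed curve in $N$.

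The main obstacle I anticipate is not any single estimate but rather the bookkeeping to make the construction simultaneously: (a) continuous in $t$ in the $C^2$ topology, (b) uniformly close in $W^{1,2}$ so that area is preserved up to $o(1)$ as $n \to \infty$, and (c) compatible with the boundary degeneration condition. The cited references (Lemma D.1 of \cite{CM3} and Lemma 3.1 of \cite{Z}) handle exactly this bookkeeping in the sphere and torus cases; since the construction is local in $\Sigma_0$ and uses only compactness of $N$ together with $W^{1,2}$-continuity of the functionals, the same argument transfers verbatim to a fixed higher-genus base surface $\Sigma_0$, which is why the lemma can simply be quoted.
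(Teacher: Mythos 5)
Your proposal is the standard mollify-and-project argument, which is exactly the route of the cited Lemma D.1 of \cite{CM3} and Lemma 3.1 of \cite{Z}; the paper itself does not reproduce a proof but simply imports those lemmas, so your attempt is correct and takes essentially the same approach. One remark: the convolution in $t$ (and hence your endpoint bookkeeping) is unnecessary, since spatial convolution at a fixed scale $\epsilon_{n}$ is a bounded linear map from $C^{0}(\Sigma_{0},\mathbb{R}^{K})$ to $C^{2}(\Sigma_{0},\mathbb{R}^{K})$, so the continuity of $t\mapsto\tilde{\gamma}_{n}(t)$ in $C^{0}$ already makes the smoothed, projected path continuous into $C^{2}$, while the uniform-in-$t$ closeness in $W^{1,2}$ follows from the compactness of $\{\tilde{\gamma}_{n}(t)\}_{t\in[0,1]}$ in $C^{0}\cap W^{1,2}$.
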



\subsection{Summary of results on quasi-conformal mappings}

Before going to the uniformization and re-parametrization, we first summarize results of quasi-conformal mappings proved in \cite{AB, IT} and the appendix of \cite{Z}. We will focus on the a priori estimates for the conformal diffeomorphism between general metrics.

\subsubsection{Results about quasi-conformal maps}\label{results about quasi-conformal maps}

We mainly refer to Ahlfors and Bers in \cite{AB} (see also \cite[Section 6.1]{Z}). They gave the \textbf{existence} and \textbf{uniqueness} of conformal diffeomorphism $f^{\mu}:\mathbb{C}_{|dz+\mu d\overline{z}|^{2}}\rightarrow\mathbb{C}_{dwd\overline{w}}$\footnote{We use $\{z, \overline{z}\}$ and $\{w, \overline{w}\}$ as complex coordinates on $\mathbb{C}$.} fixing three points $(0, 1, \infty)$ for any $L^{\infty}$-function $\mu$ with $|\mu|\leq k<1$ (see also \cite[Theorem 4.30, Proposition 4.33]{IT}). We also call such $\mu$ (generalized) Beltrami coefficient here\footnote{Compared to that in \S \ref{Teichmuller space}.$\textbf{4}^{\circ}$, this $\nu$ is not invariant under Fuchsian group.}. Such maps satisfy the following equation (see \cite[(57)]{Z}):
\begin{equation}\label{cr1}
f^{\mu}_{\overline{z}}=\mu(z)f^{\mu}_{z}.
\end{equation}

Define the function space $B_{p}(\mathbb{C})=C^{1-\frac{2}{p}}\cap W^{1,p}_{loc}(\mathbb{C})$, where $p>2$ depends only on the bound $k$ of $|\mu|$. Suppose $\mu, \nu \in L^{\infty}(\mathbb{C})$, and $|\mu|,|\nu|\leq k$, with $k<1$. Let $f^{\mu}, f^{\nu}$ be the corresponding conformal homeomorphisms, then:

\begin{lemma}\label{cont1}
(\cite[Lemma 16, Theorem 7, Lemma 17, Theorem 8]{AB}, \cite[Lemma 6.2]{Z})
\begin{equation}
d_{S^{2}}\big(f^{\mu}(z_{1}), f^{\mu}(z_{2})\big)\leq c d_{S^{2}}(z_{1}, z_{2})^{\alpha},
\end{equation}
\begin{equation}
\|f^{\mu}_{z}\|_{L^{p}(B_{R})}\leq c(R),
\end{equation}
\begin{equation}
d_{S^{2}}\big(f^{\mu}(z), f^{\nu}(z)\big)\leq C \|\mu-\nu\|_{\infty},
\end{equation}
\begin{equation}
\|(f^{\mu}-f^{\nu})_{z}\|_{L^{p}(B_{R})}\leq C(R)\|\mu-\nu\|_{\infty}.
\end{equation}
Here $d_{S^{2}}$ is the sphere distance, which is equivalent to the plane distance of $\mathbb{C}$ on compact sets. $\alpha=1-\frac{2}{p}$. $B_{R}$ is a disk of radius $R$ on $\mathbb{C}$. All constants are uniformly bounded depending only on $k<1$.
\end{lemma}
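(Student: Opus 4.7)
The plan is to reproduce the classical Ahlfors--Bers integral-operator proof, working with the Beurling (singular) transform $T$ and the Cauchy--Pompeiu transform $P$ on $\mathbb{C}$, which satisfy $(Pg)_{\bar z}=g$ and $(Pg)_z=Tg$ for $g\in L^p_{\mathrm{comp}}$. The critical fact I rely on is Calder\'on--Zygmund together with interpolation: $\|T\|_{L^p\to L^p}$ depends continuously on $p$ and equals $1$ at $p=2$, so for each $k<1$ one can choose $p=p(k)>2$ with $k\,\|T\|_{L^p\to L^p}<1$. This $p$ is fixed throughout and yields the exponent $\alpha=1-2/p$ appearing in the statement.

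Next I would construct $f^{\mu}$ by the ansatz $f^{\mu}(z)=z+Ph^{\mu}(z)+c^{\mu}$, where $h^{\mu}\in L^p(\mathbb{C})$ is to be determined. A formal computation gives $f^{\mu}_{\bar z}=h^{\mu}$ and $f^{\mu}_z=1+Th^{\mu}$, so the Beltrami equation \eqref{cr1} becomes the fixed-point equation
\begin{equation*}
h^{\mu}=\mu+\mu\,Th^{\mu},\qquad\text{i.e.}\qquad (I-\mu T)h^{\mu}=\mu.
\end{equation*}
Since $\|\mu T\|_{L^p\to L^p}\le k\,\|T\|_{L^p\to L^p}<1$, Neumann inversion gives a unique $h^{\mu}\in L^p$ with $\|h^{\mu}\|_p\le C(k)\|\mu\|_p$ (and the corresponding local bound when $\mu$ fails to be globally $L^p$ — in that case one localizes by cutting off $\mu$ outside $B_R$ and exploiting that the correction is itself holomorphic, so absorbed into the affine normalization). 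The additive constant $c^{\mu}$ is then chosen to enforce the three-point normalization fixing $(0,1,\infty)$; uniqueness follows from Weyl's lemma since any two such solutions differ by a conformal automorphism of $S^2$ fixing the three points.

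From this representation estimates (2) and (1) are essentially automatic. For (2), $f^{\mu}_z-1=Th^{\mu}\in L^p$ globally with norm controlled by $C(k)\|h^{\mu}\|_p$, which localizes to $\|f^{\mu}_z\|_{L^p(B_R)}\le c(R)$ because the constant $1$ contributes only $|B_R|^{1/p}$. For (1), one has $f^{\mu}\in W^{1,p}_{\mathrm{loc}}$ with $p>2$, so Morrey's embedding $W^{1,p}_{\mathrm{loc}}\hookrightarrow C^{0,1-2/p}_{\mathrm{loc}}$ delivers the H\"older estimate on compact subsets of $\mathbb{C}$, and equivalence of $d_{S^2}$ with the Euclidean metric on compact sets (plus a separate direct check at $\infty$ using the three-point normalization and quasi-symmetry) extends it uniformly on $S^2$.

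The dependence estimates (3) and (4) come from a second fixed-point argument applied to the difference. Setting $g:=f^{\mu}-f^{\nu}$ and subtracting the two Beltrami equations yields
\begin{equation*}
g_{\bar z}=\mu\,g_z+(\mu-\nu)\,f^{\nu}_z,
\end{equation*}
which is again of Beltrami type with the same coefficient $\mu$ but with an inhomogeneous source $(\mu-\nu)f^{\nu}_z$. Writing $g=Pk+(\text{affine})$ with $k\in L^p$ leads to $(I-\mu T)k=(\mu-\nu)(1+Tk')$ using the already-established expression for $f^{\nu}_z$; invertibility of $I-\mu T$ in $L^p$ and the $L^p$-bound from (2) give $\|k\|_{L^p(B_R)}\le C(R)\|\mu-\nu\|_\infty$, which is exactly (4) after identifying $k=g_{\bar z}$ and using $g_z=1\cdot 0 + Tk$ up to holomorphic pieces absorbed by the normalization. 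Finally (3) follows by integrating: $g$ is H\"older with bounded constant and vanishes at the three fixed points $(0,1,\infty)$, so the supremum of $|g|$ on $S^2$ is controlled by its $W^{1,p}(B_R)$-norm on a fixed compact set, yielding $d_{S^2}(f^{\mu}(z),f^{\nu}(z))\le C\|\mu-\nu\|_\infty$. The main obstacle I anticipate is the bookkeeping around the three-point normalization: ensuring that the affine corrections in the representation formula remain uniformly bounded in $k$ (not in $\mu$) and that the local $L^p$-estimates survive the passage from compactly supported $\mu$ to general $L^\infty$ coefficients; this is handled by cutting off $\mu$ outside $B_R$ and using that the two solutions differ by a quasi-conformal perturbation whose affine normalization is uniformly controlled via the fixed-point triple.
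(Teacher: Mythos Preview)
The paper does not actually prove this lemma: it is stated as a citation of Lemma~16, Theorem~7, Lemma~17, and Theorem~8 of Ahlfors--Bers \cite{AB} (and Lemma~6.2 of \cite{Z}), with no argument given. Your proposal is a faithful outline of precisely that Ahlfors--Bers integral-operator proof (Beurling transform $T$, Cauchy--Pompeiu potential $P$, Neumann inversion of $I-\mu T$ in $L^p$ with $p>2$ chosen so that $k\|T\|_{p\to p}<1$), so in that sense you are reproducing what the paper takes for granted rather than diverging from it.

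One point worth tightening: in your localization step you speak of $\|\mu\|_p$ as if $\mu\in L^p(\mathbb{C})$, but here $\mu$ is only assumed $L^\infty$ with $\|\mu\|_\infty\le k<1$. The standard fix in \cite{AB} is exactly the cutoff you mention --- first solve for $\mu$ supported in $B_R$, obtain the estimates there, and then pass to the general case by a limiting/normalization argument --- but you should make clear that the $L^p$ norm being controlled is that of $h^\mu$ on $B_R$, not of $\mu$ globally. Apart from that bookkeeping, the sketch is correct and matches the cited source.
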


\subsubsection{Results about quasi-linear quasi-conformal maps}\label{results about quasi-linear quasi-conformal maps}

What we concern in our case are the conformal homeomorphisms $h^{\mu}:\mathbb{C}_{dwd\overline{w}}\rightarrow\mathbb{C}_{|dz+\mu
d\overline{z}|^{2}}$ fixing three points $(0,1, \infty)$, which arise as the inverse mappings of those $f^{\mu}$ of Ahlfors and Bers. In fact, suppose
\begin{equation}\label{quasi-linear quasi-conformal mapping}
h^{\mu}(w)=(f^{\mu})^{-1}(w),
\end{equation}
then our mappings satisfy:
\begin{equation}\label{cr2}
h^{\mu}_{\overline{w}}=-\mu(h^{\mu}(w))\overline{h^{\mu}_{w}}.
\end{equation}
Since the equation is quasi-linear (compared to linear equation (\ref{cr1})), we call such $h^{\mu}$ \emph{quasi-linear quasi-conformal maps}.

If $\{\mu_n\}$ are a sequence of Beltrami coefficients as above, such that $\|\mu_{n}-\mu\|_{C^{1}}\rightarrow 0$, and $h^{\mu_{n}}$ satisfying (\ref{quasi-linear quasi-conformal mapping}), we have results similar to the above:

\begin{lemma}\label{cont2}
(\cite[Lemma 6.3]{Z})
\begin{equation}\label{sphere convergence for the inverse of u-conformal map}
d_{S^{2}}\big(h^{\mu_{n}}, h^{\mu}\big)\rightarrow 0,
\end{equation}
\begin{equation}\label{local Lp convergence of u-conformal map}
\|(h^{\mu_{n}}-h^{\mu})_{w}\|_{L^{p}(B_{R})}\rightarrow 0,
\end{equation}
where $p$ is given in Lemma \ref{cont1}.
\end{lemma}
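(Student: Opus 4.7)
The plan is to leverage the identity $h^{\mu_n} = (f^{\mu_n})^{-1}$ from (\ref{quasi-linear quasi-conformal mapping}) to transfer the quantitative estimates of Lemma \ref{cont1} for $f^{\mu_n}$ onto their inverses. Observe that $\|\mu_n - \mu\|_{C^1}\to 0$ implies $\|\mu_n - \mu\|_{\infty} \to 0$, which is what Lemma \ref{cont1} actually consumes, so the $C^1$ assumption is used only through its $C^0$ consequence.

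For the sphere-distance convergence (\ref{sphere convergence for the inverse of u-conformal map}), each $f^{\mu_n}$ is $K$-quasi-conformal with $K=(1+k)/(1-k)$ independent of $n$, so the inverses $h^{\mu_n}$ are likewise $K$-quasi-conformal and uniformly $\alpha$-H\"older on $S^2$ with $\alpha=1-2/p$ by a Mori-type distortion estimate. Hence $\{h^{\mu_n}\}$ is equicontinuous on the compact space $S^2$, and by Arzel\`a-Ascoli every subsequence admits a uniformly convergent sub-subsequence $h^{\mu_{n_k}}\to g$. Passing to the limit in $f^{\mu_{n_k}}\circ h^{\mu_{n_k}} = \mathrm{id}$, using the uniform convergence $f^{\mu_n}\to f^\mu$ from Lemma \ref{cont1}, yields $f^\mu\circ g = \mathrm{id}$, whence $g = h^\mu$. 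Uniqueness of the subsequential limit forces the full sequence to converge.

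For the $L^p$ convergence (\ref{local Lp convergence of u-conformal map}) I would use the chain-rule identity derived from $f_{\bar z}=\mu f_z$, which for the inverse $h=f^{-1}$ yields
$$h_w(w) \;=\; \frac{\overline{f_z(h(w))}}{|f_z(h(w))|^2 - |f_{\bar z}(h(w))|^2} \;=\; \frac{1}{f_z(h(w))\bigl(1 - |\mu(h(w))|^2\bigr)}.$$
Changing variables via $w = f^\mu(z)$ pulls the integral on $B_R$ back to the $z$-plane with Jacobian comparable to $|f^\mu_z|^2\in L^{p/2}_{\mathrm{loc}}$ by quasi-conformality. The integrand becomes the difference between $1/[f^{\mu_n}_z(z_n)(1-|\mu_n(z_n)|^2)]$ and $1/[f^\mu_z(z)(1-|\mu(z)|^2)]$, where $z_n:=h^{\mu_n}(f^\mu(z))\to z$ uniformly by the first part. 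A triangle-inequality split produces three terms, controlled respectively by $\|(f^{\mu_n}-f^\mu)_z\|_{L^p}\to 0$ from Lemma \ref{cont1}, the uniform smallness of $(1-|\mu_n|^2)^{-1}-(1-|\mu|^2)^{-1}$, and a translation-type expression $\|f^\mu_z(\cdot+a_n)-f^\mu_z(\cdot)\|_{L^p}$ with $\|a_n\|_{\infty}\to 0$.

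The main obstacle lies in the last term: composing $f^\mu_z$ with a displacement that is small only in sup-norm could a priori distort an $L^p$ norm badly, since the weak spatial regularity of $f^\mu_z$ (only $L^p_{\mathrm{loc}}$) offers no pointwise modulus of continuity. This is resolved by combining the standard $L^p$-continuity of translations for fixed $L^p_{\mathrm{loc}}$ functions with the uniform bound $\|f^{\mu_n}_z\|_{L^p(B_R)}\le c(R)$ from Lemma \ref{cont1} and the fact that the quasi-conformal change of variables from the $w$-plane to the $z$-plane distorts $L^p$ norms by a factor bounded independently of $n$. Together, these three ingredients close the argument.
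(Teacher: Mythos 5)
The paper does not actually prove this lemma---it is quoted verbatim as Lemma 6.3 of \cite{Z}---so there is no internal proof to compare against; I will assess your argument on its own terms. Your first part, establishing (\ref{sphere convergence for the inverse of u-conformal map}) by uniform H\"older equicontinuity of the $K$-quasi-conformal inverses, Arzel\`a--Ascoli, and passage to the limit in $f^{\mu_{n_k}}\circ h^{\mu_{n_k}}=\mathrm{id}$ (using both the uniform convergence and the uniform H\"older estimate for $f^{\mu}$ from Lemma \ref{cont1}), is correct and is the standard route.

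The second part has a genuine gap that you do not acknowledge: the difference of \emph{reciprocals}. After your change of variables the first of your three terms is, up to bounded factors, $\bigl|1/f^{\mu_{n}}_{z}(z_{n})-1/f^{\mu}_{z}(z)\bigr|$ integrated against the weight $J_{f^{\mu}}\approx|f^{\mu}_{z}|^{2}$. Writing $|1/a-1/b|=|a-b|/(|a|\,|b|)$ shows that controlling this by $\|(f^{\mu_{n}}-f^{\mu})_{z}\|_{L^{p}}$ requires a pointwise lower bound on $|f_{z}|$, which Lemma \ref{cont1} does not provide and which fails for general measurable $\mu$: on a set where $|f^{\mu}_{z}|$ is tiny, $L^{p}$-smallness of $f^{\mu_{n}}_{z}-f^{\mu}_{z}$ says nothing about the reciprocals. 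Relatedly, the weight combines with the $p$-th power of $|1/f^{\mu}_{z}|$ to give the negative power $|f^{\mu}_{z}|^{2-p}$ (recall $p>2$), whose local integrability is itself a nontrivial higher-integrability fact about the inverse map that you would have to establish. Your assertion that the quasi-conformal change of variables ``distorts $L^{p}$ norms by a factor bounded independently of $n$'' is also false as stated: the Jacobian of a quasi-conformal map is neither bounded above nor below pointwise, only locally integrable to an exponent slightly larger than $1$. A way to avoid reciprocals entirely is to set $g_{n}=(f^{\mu_{n}})^{-1}\circ f^{\mu}$, note that $g_{n}$ is a quasi-conformal map whose Beltrami coefficient has sup-norm at most $\|\mu_{n}-\mu\|_{\infty}/(1-k^{2})\rightarrow 0$, apply Lemma \ref{cont1} with limit coefficient $0$ to get $(g_{n})_{z}\rightarrow 1$ and $(g_{n})_{\bar z}\rightarrow 0$ in $L^{p}_{loc}$, and then differentiate $h^{\mu_{n}}=g_{n}\circ h^{\mu}$ by the chain rule; even there one must treat the product of an $L^{p}$-small factor with $h^{\mu}_{w}\in L^{p}_{loc}$ via H\"older, which is exactly the kind of care your sketch elides.
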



\subsection{Uniformization for surfaces of genus $g\geq 2$}

Fix $\Sigma_{0}$ with normalized Fuchsian model $\Gamma_{0}$ as before. Denote $\pi_{0}: \mathbb{H}\rightarrow \Si_0$ by the quotient map for $(\Sigma_{0}, \Gamma_{0})$. Denote the Poincar\'e metric on $\Sigma_{0}$ by $g_{0}$. Given $\tau\in\mathcal{T}_{g}$, let the corresponding normalized Fuchsian model be $(\mathbb{H}, \Gamma_{\tau}, \Sigma_{\tau})$ as in the beginning of \S \ref{notations}. Let $\pi_{\tau}: \mathbb{H}\rightarrow\Sigma_{\tau}$ be the quotient map, and $f_{\tau}: \Sigma_{0}\rightarrow\Sigma_{\tau}$ the Teichm\"uller mapping.

\begin{proposition}\label{uniformization proposition}
Let $g$ be a $C^{1}$ metric on $\Sigma_{0}$. We can view $g$ as a metric on $\mathbb{H}$ by lifting up using $\pi_{0}$. Then there is a unique element $\tau\in\mathcal{T}_{g}$ with normalized Fuchsian model $(\Sigma_{\tau}, \Gamma_{\tau})$, and a unique orientation-preserving $C^{1,\frac{1}{2}}$ conformal diffeomorphism $h:\Sigma_{\tau}\rightarrow(\Sigma_{0}, g)$, such that $h$ is homotopic to $f_{\tau}^{-1}$, with the normalization that if lifting up to $\tilde{h}:\mathbb{H}\rightarrow\mathbb{H}$ by $\pi_{\tau}$ and $\pi_{0}$, $\tilde{h}^{*}(\Gamma_{0})=\Gamma_{\tau}$. Furthermore, given a one-parameter family of $C^{1}$ metrics $g(t)$ on $\Sigma_{0}$ which is continuous w.r.t. $t$ in the $C^{1}$-class, i.e. $g(t)\in C^{1}\big([0,1],\ \textrm{$C^{1}$-metrics}\big)$, and $g(t)\geq\epsilon g_{0}$ for some uniform $\epsilon>0$, let $\big(\tau(t), h(t)\big)$ be the corresponding elements in $\mathcal{T}_{g}$ and normalized conformal diffeomorphisms, then $\tau(t)$ and $h(t)$ are continuously w.r.t. $t$ in $\mathcal{T}_{g}$ and $C^{0}\cap W^{1,2}(\Sigma_{\tau(t)}, \Sigma_{0})$ respectively.
\end{proposition}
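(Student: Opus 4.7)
The plan is to translate the uniformization problem into solving a Beltrami equation on $\mathbb{C}$ with a symmetry condition and a $\Gamma_0$-equivariance condition, and then to read off $\tau$ and $h$ from the Ahlfors--Bers machinery of Section 3.1. In a local holomorphic chart $z$ on $\Sigma_0$, write $g = \lambda(z)|dz+\mu(z)d\bar z|^2$; the function $\mu$ transforms as a Beltrami differential under holomorphic change of chart, so it is a globally defined object on $\Sigma_0$, and the two-sided comparison $g \geq \epsilon g_0$ together with the boundedness of $g$ on a fundamental domain translates to $\|\mu\|_\infty \leq k < 1$ for some $k$ depending only on $\epsilon$. Pulling back by $\pi_0$ gives a $\Gamma_0$-invariant Beltrami coefficient $\tilde\mu$ on $\mathbb{H}$, and extending it to all of $\mathbb{C}$ by reflection $\tilde\mu(z) := \overline{\tilde\mu(\bar z)}$ in the lower half plane preserves the sup bound while forcing the eventual normalized solution to carry $\mathbb{R}$ to $\mathbb{R}$ and hence $\mathbb{H}$ to $\mathbb{H}$.

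Next, apply the Ahlfors--Bers theorem to produce the unique normalized quasi-conformal homeomorphism $F:\mathbb{C}\to\mathbb{C}$ fixing $(0,1,\infty)$ and solving $F_{\bar z} = \tilde\mu F_z$. The reflection symmetry of $\tilde\mu$ combined with uniqueness forces $\overline{F(\bar z)} = F(z)$, so $F$ restricts to a homeomorphism of $\mathbb{H}$. For every $\gamma\in\Gamma_0$ the conjugate $F\gamma F^{-1}$ satisfies the same Beltrami equation (this is exactly the $\Gamma_0$-invariance of $\tilde\mu$) and sends three real points to three real points, hence lies in $PSL(2,\mathbb{R})$. Set $\Gamma_\tau := F\Gamma_0 F^{-1}$, $\Sigma_\tau := \mathbb{H}/\Gamma_\tau$, and take $\tau\in\mathcal{T}_g$ to be the resulting Teichm\"uller point. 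The inverse $\tilde h := F^{-1}$ is precisely a quasi-linear quasi-conformal map in the sense of Section 3.1.2, and it descends to $h:\Sigma_\tau \to (\Sigma_0, g)$; because the Beltrami coefficient of $F$ is $\mu$ by construction, $F^*(dw\,d\bar w)$ is conformal to $g$, so $h$ is conformal. The $C^{1,1/2}$ regularity comes from the standard Schauder-type regularity for the Beltrami equation: since $\mu\in C^1 \subset C^{0,1/2}$, solutions lie in $C^{1,1/2}_{\mathrm{loc}}$. Uniqueness of the pair $(\tau,h)$ is forced by the three-point normalization, and the isotopy $h\simeq f_\tau^{-1}$ is immediate from the observation that $h^{-1}$ and $f_\tau$ both represent the same marking of $\tau$, so Lemma 5.1 of \cite{IT} places them in the same homotopy class.

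For the continuity statement, the uniform bound $g(t)\geq\epsilon g_0$ yields a uniform $k<1$ for $\|\mu(t)\|_\infty$, and the hypothesis that $g(t)$ is continuous in $C^1$ yields continuity of $\mu(t)$ in $C^1$, hence in $L^\infty$. The convergence estimates of Lemma 3.1 and Lemma 3.2 then give, locally uniformly on $\mathbb{H}$, convergence of $F_{t_n}$ and $\tilde h_{t_n}$ in $C^0 \cap W^{1,p}_{\mathrm{loc}}$ for any admissible $p>2$. In particular, the generators of $\Gamma_{\tau(t)} = F_t\Gamma_0 F_t^{-1}$ depend continuously on $t$ in $PSL(2,\mathbb{R})$, which is exactly continuity of $\tau(t)$ in $\mathcal{T}^\sharp_g = \mathcal{T}_g$. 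Pulling back to $\Sigma_0$ by the Teichm\"uller mapping $f_{\tau(t)}$ as at the end of Section 2.2 then upgrades this to continuity of $h(t)$ in $C^0 \cap W^{1,2}(\Sigma_{\tau(t)},\Sigma_0)$ in the required sense. The main obstacle I expect is making sure that the three compatibility conditions --- reflection symmetry, $\Gamma_0$-equivariance, and three-point normalization --- interact cleanly, so that the plane-level Ahlfors--Bers output actually delivers an element of $\mathcal{T}^\sharp_g$ and so that the pointwise convergence of Fuchsian generators lifts to genuine continuity in the topology induced from $A_2(\Sigma_0)_1$, rather than only on the set-theoretic level of the group.
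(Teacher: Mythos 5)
Your proposal is correct and follows essentially the same route as the paper: reduce to a $\Gamma_{0}$-invariant Beltrami coefficient via Lemma \ref{domain metric}, solve the normalized Beltrami equation on $\mathbb{H}$ (the paper cites Proposition 4.33 of \cite{IT} for the half-plane version, which is exactly the reflection argument you spell out), define $\Gamma_{\tau}=\theta_{f^{\mu}}(\Gamma_{0})$ and $h=(f^{\mu})^{-1}$, and derive continuity of $\tau(t)$ and $h(t)$ from Lemmas \ref{cont1} and \ref{cont2} together with the Fricke coordinates on $\mathcal{T}_{g}$. The only cosmetic differences are that you make the reflection extension and the $PSL(2,\mathbb{R})$-conjugation explicit while the paper delegates these to \cite{IT}, and you attribute the $C^{1,\frac{1}{2}}$ regularity to Schauder theory for the Beltrami equation where the paper cites \cite{J1}.
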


\begin{remark}
Here the space $C^{0}\cap W^{1,2}(\Sigma_{\tau(t)}, \Sigma_{0})$ have varying domains $\Sigma_{\tau(t)}$, and the continuity is defined in \S \ref{notations}.
\end{remark}

We need the following result to prove the proposition. Let $g$ be a Riemannian metric on the complex plane $\mathbb{C}$.

\begin{lemma}\label{domain metric}
(\cite[Lemma 6.1]{Z}) In the complex coordinates $\{z, \overline{z}\}$, we can write $g=\lambda(z)|dz+\mu(z)d\overline{z}|^{2}$. Here $\lambda(z)>0$, and $\mu(z)$ is complex function on the complex plane with $|\mu|<1$. If $g\geq\epsilon dzd\overline{z}$, there exists a $k=k(\epsilon)<1$, such that $|\mu|\leq k$. Furthermore, $\mu$ is a rational function of the components $g_{ij}(z)$, so if a family $g(t)$ is continuous w.r.t. $t$ in the $C^{1}$-class, the corresponding $\mu(t)$ is also continuous in the $C^{1}$-class.
\end{lemma}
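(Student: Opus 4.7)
The plan is to solve the defining equation $g=\lambda\,|dz+\mu\,d\overline{z}|^{2}$ directly for $\lambda$ and $\mu$ in terms of the real components $g_{ij}$, and then simply read off the bound on $|\mu|$ and the regularity statement from the resulting closed-form expression. Writing $z=x+iy$ and $\mu=a+ib$, a straightforward expansion gives
\begin{equation*}
|dz+\mu\,d\overline{z}|^{2}=(1+2a+|\mu|^{2})\,dx^{2}+4b\,dx\,dy+(1-2a+|\mu|^{2})\,dy^{2}.
\end{equation*}
Matching coefficients with $g=g_{11}\,dx^{2}+2g_{12}\,dx\,dy+g_{22}\,dy^{2}$ yields the system
\begin{equation*}
g_{11}+g_{22}=2\lambda(1+|\mu|^{2}),\quad g_{11}-g_{22}=4\lambda a,\quad g_{12}=2\lambda b,
\end{equation*}
and a direct check shows $\det g=g_{11}g_{22}-g_{12}^{2}=\lambda^{2}(1-|\mu|^{2})^{2}$. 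Since we require $\lambda>0$ and $|\mu|<1$, we take $\lambda(1-|\mu|^{2})=\sqrt{\det g}$, add this to $\lambda(1+|\mu|^{2})=\tfrac{1}{2}(g_{11}+g_{22})$, and obtain the explicit formulas
\begin{equation*}
\lambda=\tfrac{1}{4}(g_{11}+g_{22})+\tfrac{1}{2}\sqrt{\det g},\qquad \mu=\frac{(g_{11}-g_{22})+2i\,g_{12}}{(g_{11}+g_{22})+2\sqrt{\det g}}.
\end{equation*}

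Next I would extract the bound on $|\mu|$ from this formula. Using the elementary identity $(g_{11}-g_{22})^{2}+4g_{12}^{2}=(g_{11}+g_{22})^{2}-4\det g$, the numerator of $|\mu|^{2}$ factors as $\bigl((g_{11}+g_{22})+2\sqrt{\det g}\bigr)\bigl((g_{11}+g_{22})-2\sqrt{\det g}\bigr)$, so that
\begin{equation*}
1-|\mu|^{2}=\frac{4\sqrt{\det g}}{(g_{11}+g_{22})+2\sqrt{\det g}}.
\end{equation*}
In particular $|\mu|<1$ whenever $g$ is positive definite. For the quantitative bound, the hypothesis $g\geq\epsilon\,dz\,d\overline{z}$ combined with the pointwise upper bound on $g$ coming from the $C^{1}$ family assumption (as invoked in Proposition \ref{uniformization proposition}) gives $\det g\geq\epsilon^{2}$ and $g_{11}+g_{22}\leq M$, hence $1-|\mu|^{2}\geq 4\epsilon/(M+2\epsilon)=:1-k(\epsilon)^{2}>0$, which is the uniform bound $|\mu|\leq k(\epsilon)<1$.

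Finally, the regularity and continuity statements are read off the formula. On the open set $\{\det g>0\}$ the denominator $(g_{11}+g_{22})+2\sqrt{\det g}$ is smooth and strictly positive, so $\mu$ is an algebraic, and in particular $C^{\infty}$, function of the three components $g_{11},g_{12},g_{22}$ (the word \emph{rational} in the statement is to be understood in this slightly extended sense, since $\sqrt{\det g}$ appears). Composition then gives that if $g(t)\in C^{1}([0,1],C^{1}\text{-metrics})$ with $g(t)\geq\epsilon g_{0}$, the associated $\mu(t)$ depends $C^{1}$-continuously on $t$ in the $C^{1}$ class as well. There is no real obstacle in the argument: it is a pure linear-algebra computation followed by an inspection of the resulting explicit formula; the only mild subtlety is ensuring the denominator stays bounded away from zero, which is precisely what the lower bound $g\geq\epsilon\,dz\,d\overline{z}$ provides.
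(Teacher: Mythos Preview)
The paper does not actually prove this lemma; it is quoted verbatim as Lemma~6.1 of \cite{Z} and used as a black box in the proof of Proposition~\ref{uniformization proposition}. So there is no in-paper argument to compare against.

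Your computation is correct and is the standard way to establish the result: expand $|dz+\mu\,d\bar z|^{2}$ in real coordinates, match coefficients, and solve. The explicit formulas you obtain,
\[
\mu=\frac{(g_{11}-g_{22})+2i\,g_{12}}{(g_{11}+g_{22})+2\sqrt{\det g}},\qquad
1-|\mu|^{2}=\frac{4\sqrt{\det g}}{(g_{11}+g_{22})+2\sqrt{\det g}},
\]
are exactly the ones one finds in any treatment of Beltrami coefficients. Two remarks are worth making explicit. First, you correctly observe that the expression involves $\sqrt{\det g}$, so ``rational'' in the statement must be read loosely; what matters for the application is that $\mu$ is a smooth function of $(g_{ij})$ on the set where $\det g>0$, and this you have. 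Second, you are right that the lower bound $g\geq\epsilon\,dz\,d\bar z$ alone does \emph{not} force $|\mu|\leq k(\epsilon)<1$: in terms of eigenvalues $\lambda_{1},\lambda_{2}$ one has $|\mu|^{2}=(\sqrt{\lambda_{1}}-\sqrt{\lambda_{2}})^{2}/(\sqrt{\lambda_{1}}+\sqrt{\lambda_{2}})^{2}$, which tends to $1$ if one eigenvalue blows up. You patch this by invoking the ambient upper bound coming from the $C^{1}$ family in Proposition~\ref{uniformization proposition}, which is precisely how the lemma is applied in the paper; so the argument is complete for the intended use, even if the lemma as stated is slightly imprecise on this point.
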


\begin{proof}
(of Proposition \ref{uniformization proposition}). Let us fist show the existence of such mark $\tau\in\mathcal{T}_{g}$ and conformal homeomorphism $h$. Pull $g$ back to $\mathbb{H}$ by $\pi_{0}$ and denote it still by $g$, then it is invariant under the $\Gamma_{0}$ group action. By Lemma \ref{domain metric}, $g=\lambda(z)|dz+\mu(z)d\overline{z}|^{2}$, with $|\mu(z)|\leq k<1$. Here $\mu$ is the Beltrami coefficient mentioned in \S \ref{results about quasi-conformal maps}. Then we have a unique normalized quasi-conformal mapping $f^{\mu}: \mathbb{H}_{|dz+\mu d\overline{z}|^{2}}\rightarrow\mathbb{H}_{dwd\overline{w}}$ (see also \cite[Proposition 4.33]{IT}). Now push forward the Fuchsian group $\Gamma_{0}$ under $f^{\mu}$. Since $f^{\mu}$ is a homeomorphism, we get another Fuchsian group $\Gamma_{f^{\mu}}=f^{\mu}_{*}(\Gamma_{0})=\theta_{f^{\mu}}(\Gamma_{0})$ on $\mathbb{H}_{dwd\overline{w}}$. This Fuchsian group gives a normalized Fuchsian model which represents an element in $\mathcal{T}_{g}$. Denote this element by $\tau$. Denoting $\Gamma_{f^{\mu}}$ by $\Gamma_{\tau}$, we get a Fuchsian model $\Sigma_{\tau}=\mathbb{H}/\Gamma_{\tau}$. Let $\pi_{\tau}: \mathbb{H}\rightarrow\Sigma_{\tau}$ be the quotient map, then after taking quotient of $f^{\mu}$ by $\pi_{0}$ and $\pi_{\tau}$, we get $f^{\mu}:\Sigma_{0}\rightarrow\Sigma_{\tau}$\footnote{We denote the quotient map still by $f^{\mu}$.}. By the definition of quasi-conformal maps, this $f^{\mu}$ is conformal between $(\Sigma_{0}, |dz+\mu(z)d\overline{z}|^{2})$ and $\Sigma_{\tau}$, and hence conformal between $(\Sigma_{0}, g)$ and $\Sigma_{\tau}$. Let $h=(f^{\mu})^{-1}$, then $h$ is a conformal homeomorphism between $\Sigma_{\tau}$ and $(\Sigma_{0}, g)$. The $C^{1, \frac{1}{2}}$-regularity of $h$ follows from \cite[Theorem 3.1.1 and Theorem 3.3.1]{J1}. By the definition of Teichm\"uller map $f_{\tau}: \Sigma_{0}\rightarrow\Sigma_{\tau}$, if we pull $f_{\tau}$ back to $\tilde{f}_{\tau}:\mathbb{H}\rightarrow\mathbb{H}$ by $\pi_{0}$ and $\pi_{\tau}$, then $(\tilde{f}_{\tau})_{*}(\Gamma_{0})=\theta_{f_{\tau}}(\Gamma_{0})=\Gamma_{\tau}$. So by \cite[Lemma 5.1]{IT}, we know that $f_{\tau}$ is homotopic to $f^{\mu}$. So $h$ is homotopic to $f_{\tau}^{-1}$. The normalization of $\tilde{h}$, i.e. $\tilde{h}^{*}(\Gamma_{0})=\Gamma_{\tau}$, comes trivially from the fact that $\Gamma_{\tau}=(f^{\mu})_{*}(\Gamma_{0})$ and $\tilde{h}=(f^{\mu})^{-1}$. The uniqueness of such $\tau$ and $h$ follows from the uniqueness of $f^{\mu}$.

Now let us talk about the continuous dependence of $(\tau, h)$ on $\mu$. For a continuous family of $C^{1}$ metrics $g(t)$, after pulling back to $\mathbb{H}$ by $\pi_{0}$, $g(t)=\lambda(t)|dz+\mu(t)d\overline{z}|^{2}$, and is continuous w.r.t. $t$ in the $C^{1}$-class. We have $|\mu(t)|\leq k(\epsilon)<1$, and $\mu(t)$ continuous w.r.t. $t$ in the $C^{1}$ class by Lemma \ref{domain metric}. Let $f(t)=f^{\mu(t)}$ and $\tilde{h}(t)=(f(t))^{-1}$ as above.

First, let us show the continuity of $\tau(t)$ w.r.t. the parameter $t$. Now the corresponding normalized Fuchsian model $\Gamma_{\tau(t)}$ is given by $f^{\mu(t)}_{*}(\Gamma_{0})$. Suppose that the normalized generators for $\Gamma_{0}$ (see \S \ref{Teichmuller space}.$\textbf{3}^{\circ}$ and \cite[\S 2.5]{IT}) are $\{\alpha^{0}_{i}, \beta^{0}_{i}\}^{g}_{i=1}$, where $\alpha^{0}_{g}$ has attractive fixed point at $1$ and $\beta^{0}_{g}$ has repelling and attractive fixed point at $0$ and $\infty$ respectively. Then clearly $\{\theta_{f^{\mu(t)}}(\alpha^{0}_{i}), \theta_{f^{\mu(t)}}(\beta^{0}_{i})\}^{g}_{i=1}$ form the normalized generators for $\Gamma_{\tau(t)}$. Now
\begin{equation}
\theta_{f^{\mu(t)}}(\gamma)=f^{\mu(t)}\circ\gamma\circ(f^{\mu(t)})^{-1}=f^{\mu(t)}\circ\gamma\circ\tilde{h}(t).
\end{equation}
By Lemma \ref{cont1} and Lemma \ref{cont2}, $f^{\mu(t)}$ and $\tilde{h}(t)$ are continuous w.r.t. the parameter $t$ in $C^{0}$-class when acting on compact subsets of $\mathbb{C}$. So for fixed $\gamma\in\Gamma_{0}$, $\theta_{f^{\mu(t)}}(\gamma)$ is continuous w.r.t. the parameter $t$, which means that the coefficients of the linear fractional transformation corresponding to $\theta_{f^{\mu(t)}}(\gamma)$ are continuous functions of $t$. So the coefficients for $\{\theta_{f^{\mu(t)}}(\alpha^{0}_{i}), \theta_{f^{\mu(t)}}(\beta^{0}_{i})\}^{g}_{i=1}$ are continuous functions of $t$. Now using the topology of Fricke Space as in \S \ref{Teichmuller space}.$\textbf{3}^{\circ}$ (see also \cite[Section 2.5, Lemma 5.10 and Lemma 5.13]{IT}), the corresponding elements $\tau(t)\in\mathcal{T}_{g}$ are continuous w.r.t. the parameter $t$ in the natural topology of $\mathcal{T}_{g}$.

Next, let us show the continuity of $h(t)$. Lift up to $\tilde{h}(t):\mathbb{H}_{dwd\overline{w}}\rightarrow\mathbb{H}_{|dz+\mu(t)d\overline{z}|^{2}}$, then $\tilde{h}(t)=(f^{\mu(t)})^{-1}$ are $\mu(t)$-quasi-linear quasi-conformal map as in \S \ref{results about quasi-linear quasi-conformal maps}. So by Lemma \ref{cont2}, we have the local $C^{0}\cap W^{1, 2}(\mathbb{H}, \mathbb{H})$ continuity of $\tilde{h}(t)$ w.r.t. $t$, since $\mu(t)$ is continuous in $C^{1}$ w.r.t. the parameter $t$. It directly implies the continuity of $h(t):\Sigma_{\tau(t)}\rightarrow\Sigma_{0}$ in the sense of \S \ref{notations}, i.e. when restricting to compact subsets $K$ of $\mathbb{H}$, the lift-up mapping $\tilde{h}(t)\in C^{0}\big([0, 1], C^{0}\cap W^{1, 2}(K, N)\big)$.
\end{proof}


\subsection{Construction of the conformal re-parametrization}

Recall the minimizing sequence $\big\{\tilde{\gamma}_{n}(t)\big\}_{n\in\N}\subset[\beta]\subset\Omega$ given in \S \ref{sketch of variational method}. We consider $\tilde{g}_{n}(t)=\tilde{\gamma}_{n}(t)^{*}h$, which is continuous w.r.t. $``t"$ in the $C^{1}$-class by Lemma \ref{mollifying}. Since $\tilde{g}_{n}(t)$ may be degenerate, let $g_{n}(t)=\tilde{g}_{n}(t)+\delta_{n} g_{0}$, where $g_{0}$ is the Poincar\'e metric of $\Sigma_{0}$, and $\delta_{n}$ is arbitrarily small. Then $g_{n}(t)$ uniquely determines $\tau_{n}(t)\subset\mathcal{T}_{g}$ and conformal diffeomorphism $h_{n}(t)$ by Proposition \ref{uniformization proposition}. We have the following result similar to \cite[Theorem 3.1]{Z}.

\begin{theorem}\label{conformal parametrization}
Using the above notations, we have re-parametrizations $\big(\gamma_{n}(t), \tau_{n}(t)\big)\in\tilde{\Omega}$ for $\tilde{\gamma}_{n}(t)$, i.e.
$\gamma_{n}(t)=\tilde{\gamma}_{n}\big(h_{n}(t), t\big)$, such that $\gamma_{n}(t)\in\big[\tilde{\gamma}_{n}\big]$ in $\ti{\Omega}$, and
\begin{equation}\label{equation of conformal parametrization}
E\big(\gamma_{n}(t),
\tau_{n}(t)\big)-Area\big(\gamma_{n}(t)\big)\rightarrow 0,
\end{equation}
for some sequence $\delta_{n}\rightarrow 0$ as $n\rightarrow\infty$.
\end{theorem}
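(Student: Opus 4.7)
The plan is to apply Proposition \ref{uniformization proposition} to a regularization of the (possibly degenerate) pullback metric $\tilde g_n(t)$, producing the desired Teichm\"uller parameter and conformal reparametrization, and then to exploit the fact that in dimension two the Dirichlet energy exceeds the area by exactly the deviation from conformality.

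First I would fix a small $\delta_n > 0$ to be chosen later and define $g_n(t) = \tilde g_n(t) + \delta_n g_0$. By Lemma \ref{mollifying} this is a continuous $C^1$ family of Riemannian metrics on $\Sigma_0$ with $g_n(t) \geq \delta_n g_0$, so Proposition \ref{uniformization proposition} produces continuous paths $\tau_n(t) \in \mathcal T_g$ and conformal diffeomorphisms $h_n(t): \Sigma_{\tau_n(t)} \to (\Sigma_0, g_n(t))$. Setting $\gamma_n(t) = \tilde\gamma_n(t) \circ h_n(t)$, the continuity of $\tilde\gamma_n(t)$ combined with that of $h_n(t)$ gives $(\gamma_n(t), \tau_n(t)) \in \tilde\Omega$. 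To verify $\gamma_n(t) \in [\tilde\gamma_n]$ inside $\tilde\Omega$, I would apply the uniformization proposition to the two-parameter family $g_n^{s}(t) = (1-s)g_0 + s\, g_n(t)$ for $s \in [0,1]$: at $s=0$ the metric reduces to $g_0$, so the uniformization returns $\tau_0$ and $h_n^{0}(t) = \mathrm{id}$, hence $\gamma_n^{0}(t) = \tilde\gamma_n(t)$, while at $s=1$ one recovers $(\gamma_n(t), \tau_n(t))$, and the resulting continuous family realizes the required homotopy.

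The core estimate \eqref{equation of conformal parametrization} rests on the $2$-dimensional conformal invariance of the Dirichlet energy. Since $h_n(t)$ is conformal, a change of variables gives
$$E\bigl(\gamma_n(t), \tau_n(t)\bigr) = \tfrac{1}{2}\int_{\Sigma_0} |d\tilde\gamma_n(t)|^{2}_{g_n(t)}\, dvol_{g_n(t)},$$
while $Area\bigl(\gamma_n(t)\bigr) = Area\bigl(\tilde\gamma_n(t)\bigr)$ by diffeomorphism invariance. In a $g_0$-orthonormal frame that diagonalizes $\tilde g_n(t)$ with eigenvalues $a_1, a_2 \geq 0$, a short pointwise computation identifies the integrand of $E - Area$ with
$$\frac{\bigl(\sqrt{a_1(a_2+\delta_n)} - \sqrt{a_2(a_1+\delta_n)}\bigr)^{2}}{2\sqrt{(a_1+\delta_n)(a_2+\delta_n)}}\, dvol_{g_0}.$$
Rewriting the numerator via $\sqrt{x}-\sqrt{y} = (x-y)/(\sqrt{x}+\sqrt{y})$ gives $|\sqrt{a_1(a_2+\delta_n)} - \sqrt{a_2(a_1+\delta_n)}| \leq \sqrt{\delta_n}\,|\sqrt{a_1} - \sqrt{a_2}|$, and combined with $\sqrt{(a_1+\delta_n)(a_2+\delta_n)} \geq \sqrt{\delta_n(a_1+a_2)}$ it yields the pointwise bound $C\sqrt{\delta_n}\,\sqrt{a_1+a_2}\, dvol_{g_0}$. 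Cauchy-Schwarz then produces
$$E\bigl(\gamma_n(t), \tau_n(t)\bigr) - Area\bigl(\gamma_n(t)\bigr) \leq C\sqrt{\delta_n}\,\sqrt{E_0\bigl(\tilde\gamma_n(t)\bigr)},$$
where $E_0$ denotes the energy with respect to the background metric $g_0$. Because $\tilde\gamma_n \in C^0\bigl([0,1], C^{2}(\Sigma_0, N)\bigr)$ by Lemma \ref{mollifying}, the quantity $M_n := \max_{t \in [0,1]} E_0\bigl(\tilde\gamma_n(t)\bigr)$ is finite, so choosing $\delta_n > 0$ with $\delta_n M_n \to 0$ (e.g.\ $\delta_n = 1/(nM_n + 1)^2$) gives the claimed uniform-in-$t$ convergence.

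The main obstacle I foresee is the interplay between the degeneracy of $\tilde g_n(t)$ at points where $d\tilde\gamma_n(t)$ drops rank and the requirement that $\tau_n(t)$ vary continuously in $t$ inside $\mathcal T_g$: the $\delta_n g_0$ regularization is precisely what makes Proposition \ref{uniformization proposition} applicable and supplies the quantitative control above, but $\delta_n$ must be simultaneously small enough to close the energy-area gap and coupled to the possibly large background energy $M_n$ in a way that preserves uniformity in $t$. The rank-drop locus is where the pointwise factor $\sqrt{(a_1+\delta_n)(a_2+\delta_n)}$ approaches its worst value $\delta_n$, and this is the place where the inequality above is sharp and where the choice of $\delta_n$ truly matters.
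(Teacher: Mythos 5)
Your proposal is correct and follows essentially the same route as the paper: regularize $\tilde g_n(t)$ by $\delta_n g_0$, apply Proposition \ref{uniformization proposition}, and use the conformality of $h_n(t)$ to bound $E-Area$ by $C\sqrt{\delta_n}$ times a quantity controlled by $\tilde\gamma_n$, then choose $\delta_n$ depending on $\tilde\gamma_n$. The only differences are cosmetic: the paper obtains the estimate via $E\big(\gamma_n,\tau_n\big)=E\big(h_n\to(\Sigma_0,\tilde g_n)\big)\le E\big(h_n\to(\Sigma_0,g_n)\big)=Area\big(\Sigma_0,g_n\big)$ followed by an expansion of $\det g_n(t)$, whereas your pointwise eigenvalue computation is an exact version of the same bound, and your metric-interpolation homotopy is a valid substitute for the paper's shorter appeal to the fact that $h_n(t)$ is isotopic to $f_{\tau_n(t)}^{-1}$.
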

\begin{proof}
We know that $h_{n}(t): \Sigma_{\tau_{n}(t)}\rightarrow(\Sigma, g_{n}(t))$ are conformal diffeomorphisms. Let $\gamma_{n}(t)=\tilde{\gamma}_{n}\big(h_{n}(t), t\big): \Sigma_{\tau_{n}(t)}\rightarrow N$ be the composition with the almost conformal parametrization. To show that $\gamma_{n}(t)$ is a sweep-out in $\ti{\Omega}$, we only need to show the continuity. The continuity of $t\rightarrow\gamma_{n}(t)$ from $[0,1]$ to $C^{0}\cap W^{1,2}(\Sigma_{\tau_{n}(t)}, N)$ follows from the continuity of $t\rightarrow\tilde{\gamma}_{n}(t)$ in $C^{2}$ by Lemma \ref{mollifying}, and that of $t\rightarrow h_{n}(t)$ in $C^{0}\cap W^{1,2}(\Sigma_{\tau(t)}, \Sigma_{0})$ by Proposition \ref{uniformization proposition}.

Moreover, $\gamma_{n}(t)$ is homotopic to $\tilde{\gamma}_{n}(t)$ by the following argument. From our discussion of homotopy equivalence of mappings defined on different domains in \S \ref{notations}, we view $\gamma_{n}(t)$ as mappings defined on $\Sigma_{0}$ by composing with the Teichm\"uller mapping $f_{\tau_{n}(t)}: \Sigma_{0}\rightarrow\Sigma_{\tau_{n}(t)}$, and then compare it to $\tilde{\gamma}_{n}(t)$. Since $h_{n}(t)$ are homotopic equivalent to $f_{\tau_{n}(t)}^{-1}$ by Proposition \ref{uniformization proposition}, $h_{n}(t)\circ f_{\tau_{n}(t)}$ is homotopic equivalent to the identity map of $\Sigma_{0}$. While $\gamma_{n}$ are the composition of $\tilde{\gamma}_{n}$ with $h_{n}(t)$, then $\gamma_{n}\circ f_{\tau_{n}}$ is homotopic equivalent to $\tilde{\gamma}_{n}$, hence $\gamma_{n}\sim\tilde{\gamma}_{n}$.

Finally, we can get estimates as in \cite[Appendix D]{CM3} and the proof of \cite[Theorem 3.1]{Z}:
\begin{equation}
\begin{split}
E\big(\gamma_{n}(t), \tau_{n}(t)\big) &=E\big(h_{n}(t):T^{2}_{\tau_{n}(t)}\rightarrow(\Sigma_{0}, \tilde{g}_{n}(t))\big)\leq E\big(h_{n}(t): \Sigma_{\tau_{n}(t)}\rightarrow(\Sigma_{0}, g_{n}(t))\big)\\
&=Area\big(h_{n}(t): \Sigma_{\tau_{n}(t)}\rightarrow(\Sigma_{0}, g_{n}(t))\big)\\
&=Area\big(\Sigma_{0}, g_{n}(t)\big)=\int_{\Sigma_{0}}[det\big(g_{n}(t)\big)]^{\frac{1}{2}}dvol_{0}\\
&=\int_{\Sigma_{0}}[det\big(\tilde{g}_{n}(t)\big)+\delta_{n}Tr_{g_{0}}\tilde{g}_{n}(t)+C(\tilde{g}_{n}(t))\delta_{n}^{2}]^{\frac{1}{2}}dvol_{0}\\
&\leq Area(\Sigma_{0}, \tilde{g}_{n}(t))+C(\tilde{g}_{n}(t))\sqrt{\delta_{n}}\\
&=Area\big(\gamma_{n}(t):\Sigma_{0}\rightarrow N\big)+C(\tilde{\gamma}_{n})\sqrt{\delta_{n}}.
\end{split}
\end{equation}
The first and last equality follow from the definition of energy and area integral, and the first inequality is due to the fact
$\tilde{g}_{n}(t)\leq g_{n}(t)$. Hence we have (\ref{equation of conformal parametrization}), if we choose $\delta_{n}\rightarrow 0$ depending only on $\tilde{\gamma}_{n}$.
\end{proof}

\begin{remark}\label{energy equivalent to area}
By argument similar to \cite[Proposition 1.5]{CM3} and \cite[Remark 3.2]{Z}, the above theorem implies that $\mathcal{W}=\mathcal{W}_{E}$.
\end{remark}


\section{Compactification for mappings}\label{compactification for mappings}

For each $(\gamma_{n}(t), \tau_{n}(t))$ gotten above, $\tau_{n}(t)$ corresponds to a normalized Fuchsian model $(\Sigma_{\tau_{n}(t)}, \Gamma_{\tau_{n}(t)})$. We can also view $\gamma_{n}(t)$ as been lifted up to $\mathbb{H}$ by $\pi_{\tau_{n}(t)}: \mathbb{H}\rightarrow\Sigma_{\tau_{n}(t)}$. Denote the lifted mappings again by $\gamma_{n}(t)$, then $\gamma_{n}(t)$ can be viewed as defined on the same domain $\mathbb{H}$, i.e. $\gamma_{n}(t):\mathbb{H}\rightarrow N$, but invariant under different Fuchsian groups $\Gamma_{\tau_{n}(t)}$ action, i.e. $\forall\gamma\in\Gamma_{\tau_{n}(t)}$, $\gamma_{n}(t)\circ\gamma=\gamma_{n}(t)$. We can apply similar perturbation procedure to the lifted mappings as in \cite{CM3}\cite{Z}.

Before doing such perturbations, we need to introduce the notion of {\em collections of disjoint balls} on $\Sigma_{\tau}$. Here we use $\mathcal{B}=\cup_{i=1}^{n}B_{i}$ to denote a finite collection of disjoint geodesic balls on $\Sigma_{\tau}$, with the radii of each ball less than the injective radius of the center of that ball on $\Sigma_{\tau}$. Taking a ball $B\in\mathcal{B}$ with radius $r_{B}$, we will use a sub-geodesic ball with the same center but with the radius only a ratio $\mu<1$ of $r_{B}$, which we denote by $\mu B$. Such a geodesic ball $B$ with hyperbolic metric of curvature $-1$ can always be pulled back to the Poincar\'e disk $(D, ds^{2}_{-1}=\frac{|dx|^{2}}{(1-|x|^{2})^{2}})$, such that the center of $B$ goes to the center of $D$. Then $B$ can be viewed as a disk $B(0, r^{0}_{B})$\footnote{We will use $B(0, r^0)$ to denote a disk center at $0$ of Enclidean radius $r^0$ in the following.} in $D$ with hyperbolic metric $ds^{2}_{-1}$, where $r^{0}_{B}$ is the Euclidean radius of the image of $B$ and $r_{B}=\int_{0}^{r^{0}_{B}}\frac{1}{1-t^{2}}dt=\tanh^{-1}(r^{0}_{B})$. The hyperbolic metric is now conformal and uniformly equivalent to the Euclidean metric $ds^{2}_{0}=|dx|^{2}$ on $B$. Here \emph{uniformly equivalent} means $ds^{2}_{0}\leq ds^{2}_{-1}\leq Cds^{2}_{0}$ for some constant $C>1$. There exists a small number:
\begin{equation}\label{small radius}
r_{0}=\tanh^{-1}(\frac{1}{2}),
\end{equation}
such that if we restrict the radius $r_{B}$ of $B$ with $r_B\leq r_{0}$, we can choose the constant $C=\frac{16}{9}$. Then if we consider $\frac{1}{4}B$, under the Euclidean metric $ds^{2}_{0}$, the radius of $\frac{1}{4}B$ is less than $\frac{1}{2}r^{0}_{B}$, i.e. $\frac{1}{4}B\subset B(0, \frac{1}{2}r^{0}_{B})$. Later on, we will always assume that the  geodesic balls have their radii bounded from above by $r_{0}$.

\vspace{0.5em}
Now we state the main deformation lemma.

\begin{lemma}\label{compactification}
Let $[\beta]$ and $\mathcal{W}_{E}$ be as in Definition \ref{W and W-E}. For any $\big(\gamma(t), \tau(t)\big)\in[\beta]\subset\tilde{\Omega}$ with $\underset{t\in[0,1]}{max}E\big(\gamma(t), \tau(t)\big)-\mathcal{W}_{E}\ll 1$, if $\big(\gamma(t), \tau(t)\big)$ is not harmonic unless $\gamma(t)$ is a constant map, we can perturb $\gamma(t)$ to $\rho(t)$, such that $\rho(t)\in[\gamma(t)]$ and $E\big(\rho(t), \tau(t)\big)\leq E\big(\gamma(t), \tau(t)\big)$. Moreover for any $t$ such that $E\big(\gamma(t),\tau(t)\big)\geq\frac{1}{2}\mathcal{W}_{E}$, $\rho(t)$ satisfy:
\begin{itemize}
\addtolength{\itemsep}{-0.7em}

\item[(*)] For any finite collection of disjoint balls $\underset{i}{\cup}B_{i}$ on $\Sigma_{\tau(t)}$ with the geodesic radius of each ball $B_{i}$ bounded above by $r_0$ and the injective radius of the center of $B_i$ on $\Si_{\tau(t)}$, such that $E\big(\rho(t), \underset{i}{\cup}B_{i}\big)\leq\epsilon_{0}$, let $v$ be the energy minimizing harmonic map with the same boundary value as $\rho(t)$ on $\frac{1}{64}\underset{i}{\cup}B_{i}$, then we have:
\begin{equation}\label{compactification formula}
\int_{\frac{1}{64}\underset{i}{\cup}B_{i}}|\nabla\rho(t)-\nabla v|^{2}\leq\Psi\Big(E\big(\gamma(t), \tau(t)\big)-E\big(\rho(t), \tau(t)\big)\Big).
\end{equation}
\end{itemize}
Here $\epsilon_{0}$ is some small constant, and $\Psi$ is a positive continuous function with $\Psi(0)=0$.
\end{lemma}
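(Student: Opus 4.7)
The plan is to extend the iterative harmonic replacement scheme of Colding--Minicozzi \cite{CM3} and the author's genus--one construction \cite{Z} to the setting $g\geq 2$. The geometric key has already been prepared in the paragraph preceding the lemma: on any geodesic ball $B\subset\Sigma_{\tau(t)}$ of radius $\leq r_{0}=\sin^{-1}(1/2)$, the hyperbolic metric and a chosen Euclidean model are uniformly equivalent with constant $4/3$, and $\tfrac{1}{4}B$ sits inside a Euclidean ball of half the outer Euclidean radius. Consequently the Euclidean harmonic replacement estimates from Appendix B of \cite{CM3} transfer to balls in $(\Sigma_{\tau(t)},ds_{-1}^{2})$ with constants independent of $t$.

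First I would establish the single--sweep estimate. For a disjoint family $\mathcal{B}=\cup_{i}B_{i}$ with $\mathrm{radius}(B_{i})\leq r_{0}$ and $E(\gamma(t),\mathcal{B})\leq\epsilon_{0}$, let $w$ be the unique energy minimizer with boundary data $\gamma(t)|_{\partial\frac{1}{8}\mathcal{B}}$ on $\tfrac{1}{8}\mathcal{B}$, extended by $\gamma(t)$ outside. Summing the transferred Lemma B.1 of \cite{CM3} over the components yields
\[
\int_{\frac{1}{64}\mathcal{B}}|\nabla\gamma(t)-\nabla w|^{2}\ \leq\ C\bigl(E(\gamma(t),\tfrac{1}{8}\mathcal{B})-E(w,\tfrac{1}{8}\mathcal{B})\bigr),
\]
with a uniform $C$. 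This is the engine of the whole argument: every definite energy drop produced by a harmonic replacement controls a $W^{1,2}$--distance on the concentric smaller balls, which is what $(\ast)$ records in the limit.

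Next, to produce $\rho(t)$ at each $t$ I would iterate this single--sweep step. Fix a finite cover of $\Sigma_{\tau(t)}$ by geodesic balls of radius $\leq r_{0}$, partition it into a bounded number of sub--families of pairwise disjoint balls, and cycle through these sub--families, performing on each pass a simultaneous harmonic replacement on every member of the current sub--family whose local energy is $\leq\epsilon_{0}$. Each such replacement strictly decreases total energy by a definite amount and stays in the same homotopy class, because when $E(\gamma(t),B_{i})\leq\epsilon_{0}$ the image of $B_{i}$ sits in a small convex ball of $N$ and the replacement is homotopic to $\gamma(t)$ rel boundary, as in \cite{CM3}. Since total energy is a priori bounded, the procedure terminates after a uniformly bounded number of passes, yielding $\rho(t)$ with $E(\rho(t),\tau(t))\leq E(\gamma(t),\tau(t))$, $\rho(t)\in[\gamma]$, and $(\ast)$ with $\Psi$ assembled from the finitely iterated single--step estimates.

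The main obstacle I expect is the $t$--continuity of $\rho(t)$, since both the hyperbolic metric on $\Sigma_{\tau(t)}$ and the natural covers vary with $t$. The plan is to pull everything back to the fixed surface $\Sigma_{0}$ through the Teichm\"uller marking $f_{\tau(t)}$: by Proposition \ref{uniformization proposition} together with Lemma \ref{cont2}, $f_{\tau(t)}$ depends continuously on $t$ in $C^{0}\cap W^{1,2}_{\mathrm{loc}}$, so a single finite cover chosen on $\Sigma_{0}$ pushes forward to a continuously varying cover on $\Sigma_{\tau(t)}$. Continuity of the energy--minimizing harmonic map with respect to continuously varying boundary data and domain metric, as developed in Appendix B of \cite{CM3} and adapted to varying conformal structure in \cite{Z}, then propagates through the bounded number of iterations to give a continuous $\rho(t)$. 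The endpoint constraint that $\gamma(0),\gamma(1)$ map onto closed curves forces their local energies to fall below $\epsilon_{0}$ everywhere, so the replacement is trivial at the endpoints; hence $\rho(0)=\gamma(0)$, $\rho(1)=\gamma(1)$, and $\rho\in[\beta]$.
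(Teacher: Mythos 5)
There is a genuine gap at the heart of your construction: you never explain why property $(\ast)$ holds for an \emph{arbitrary} collection of disjoint balls $\mathcal{B}$, rather than only for the balls actually used in your iteration. Your ``single--sweep estimate'' (essentially Theorem \ref{energy gap}) controls $\int|\nabla\gamma(t)-\nabla w|^{2}$ on the balls where you replace, in terms of the energy drop on those same balls. But $(\ast)$ quantifies over every small--energy collection $\mathcal{B}$ on $\Sigma_{\tau(t)}$, including collections disjoint from, or overlapping in complicated ways with, the ones used to build $\rho(t)$. The paper's proof needs two ingredients you omit: (i) the quantity $e_{\epsilon,\sigma}$ of (\ref{e_epsilon, sigma}), the \emph{maximal} possible energy decrease over all admissible collections, together with Lemma \ref{sellecttion of balls}, which selects replacement regions achieving at least the definite fraction $\frac{1}{8}e_{\frac{1}{8}\epsilon_{1},\gamma(t)}$ of it; and (ii) the comparison Lemma \ref{comparison} for two successive harmonic replacements on \emph{different} collections, which converts ``the replacement we performed captured a definite fraction of the maximal decrease'' into ``replacement on any other collection now decreases energy by at most $C\big(E(\gamma(t))-E(\rho(t))\big)^{1/2}$''. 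Only then does Theorem \ref{energy gap} turn this into (\ref{compactification formula}). Your scheme of cycling through a fixed cover contains no mechanism to produce a statement about collections you never touched.

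Two subsidiary claims are also incorrect as stated. First, ``each such replacement strictly decreases total energy by a definite amount \dots the procedure terminates after a uniformly bounded number of passes'': harmonic replacement decreases energy, but by no definite amount (it does nothing on a ball where the map is already energy minimizing), so the iteration neither terminates with a harmonic map nor, if truncated after finitely many passes, yields $(\ast)$. This is precisely why the paper settles for at most two replacements per parameter $t$ and then invokes the comparison lemma instead of trying to exhaust the energy defect. Second, for $t$--continuity the paper does not use a fixed cover: it uses continuous radius functions $r_{j}(t)$ supported on intervals $I^{t_{j}}$, shrinking the replacement balls to zero radius at the ends of each interval; a fixed cover with the rule ``replace wherever the local energy is $\leq\epsilon_{0}$'' produces jumps at parameters where a local energy crosses the threshold $\epsilon_{0}$. (Your endpoint remark is also unfounded: a map onto a closed curve need not have small local energy, and the endpoint behaviour is in any case handled by the support of the $r_{j}$ and the restriction of $(\ast)$ to slices with $E\geq\frac{1}{2}\mathcal{W}_{E}$.)
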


\begin{remark}
We will mainly use the idea in the proof of \cite[Theorem 2.1]{CM3} and \cite[Lemma 4.1]{Z}. As discussed in the remarks following \cite[Lemma 4.1]{Z}, we would need to show the continuity of local harmonic replacement and comparison of energy decrease of successive harmonic replacements. The continuity of harmonic replacement is a conformal invariant property, which can be handled by pulling every ball we care back to the center of the Poincar\'e disk as above. For the comparison of the energy decrease, it turns out that what we really need to care is the analysis on a single ball. So we could do that by pulling the chosen ball to the center of the Poincar\'e disk again, without caring about the image of the other balls.
\end{remark}

In the following three subsections, we first list the results about analysis of harmonic replacements on disks. Then we give a result of comparison of harmonic replacements, where we show a result similar to \cite[Lemma 3.11]{CM3} and \cite[Lemma 4.2]{Z} by adapting the proof to the hyperbolic surfaces. At the end, we give the deformation map $\gamma\rightarrow\rho$ by explicit constructions.


\subsection{Results about harmonic replacements on disks}

Here we summarize some known results of harmonic replacements on disks. Let $B_{1}$ be the unit disk in $\mathbb{R}^{2}$, and $N$ the ambient manifold.

\begin{theorem}\label{energy gap}
(\cite[Theorem 3.1]{CM3}) There exists a small constant $\epsilon_{1}$ (depending only on $N$) such that for all maps $u, v\in W^{1,2}(B_{1}, N)$ , if $v$ is weakly harmonic with the same boundary value as $u$, and $v$ has energy less than $\epsilon_{1}$, then we have:

\begin{equation}\label{energy gap inequality}
\int_{B_{1}}|\nabla_{0}u|^{2}-\int_{B_{1}}|\nabla_{0}v|^{2}\geq\frac{1}{2}\int_{B_{1}}|\nabla_{0}u-\nabla_{0}v|^{2}.
\end{equation}
Here we use $\nabla_{0}$ to denote the flat connection of $B_{1}$.
\end{theorem}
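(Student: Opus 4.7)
The plan is to isometrically embed $N\hookrightarrow\mathbb{R}^k$ and set $w=u-v$, viewed as an $\mathbb{R}^k$-valued map. Since $u$ and $v$ share the same trace on $\partial B_1$, we have $w\in W^{1,2}_0(B_1,\mathbb{R}^k)$. Expanding the square gives
\begin{equation*}
\int_{B_1}|\nabla_0 u|^2-\int_{B_1}|\nabla_0 v|^2 \;=\; \int_{B_1}|\nabla_0 w|^2 \;+\; 2\int_{B_1}\langle\nabla_0 v,\nabla_0 w\rangle,
\end{equation*}
so the claim reduces to showing that the cross term is bounded in absolute value by $\tfrac{1}{2}\int|\nabla_0 w|^2$.

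The next step uses the weakly harmonic equation. In ambient coordinates, $v$ weakly harmonic means $\Delta v = -A(v)(\nabla_0 v,\nabla_0 v)$, where $A$ is the second fundamental form of $N$ in $\mathbb{R}^k$. Integration by parts (legitimate because $w\in W^{1,2}_0$) gives
\begin{equation*}
\int_{B_1}\langle\nabla_0 v,\nabla_0 w\rangle = \int_{B_1}\langle A(v)(\nabla_0 v,\nabla_0 v),w\rangle.
\end{equation*}
Here is the key geometric observation: $A(v)(\nabla_0 v,\nabla_0 v)$ lies in $(T_{v(x)}N)^\perp$ pointwise, so only the normal component of $w(x)=u(x)-v(x)$ at $v(x)$ contributes. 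Because both $u(x)$ and $v(x)$ lie on $N$, a Taylor expansion of $N$ near $v(x)$ shows that the chord $u-v$ has normal component of second order in $|u-v|$, bounded by $C_N|w|^2$ with $C_N$ depending only on the second fundamental form of $N$ (uniformly, using compactness of $N$ and splitting into close/far pairs). This yields the pointwise estimate $|\langle A(v)(\nabla_0 v,\nabla_0 v),w\rangle|\leq C|\nabla_0 v|^2|w|^2$.

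To absorb this into the main term, apply Hölder and the two-dimensional Sobolev--Poincar\'e inequality for $W^{1,2}_0(B_1)$:
\begin{equation*}
\int_{B_1}|\nabla_0 v|^2|w|^2 \;\leq\; \|\nabla_0 v\|_{L^4(B_1)}^2\,\|w\|_{L^4(B_1)}^2 \;\leq\; C_S\,\|\nabla_0 v\|_{L^4(B_1)}^2\int_{B_1}|\nabla_0 w|^2.
\end{equation*}
If I can produce a bound $\|\nabla_0 v\|_{L^4(B_1)}^2\leq \Psi(E(v))$ with $\Psi(t)\to 0$ as $t\to 0$, then choosing $\epsilon_1$ so small that $CC_S\Psi(\epsilon_1)\leq \tfrac{1}{4}$ makes the cross term at most $\tfrac{1}{4}\int|\nabla_0 w|^2$, which closes the argument.

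The main obstacle is exactly this last step: an $L^4$-gradient bound for weakly harmonic $v$ on all of $B_1$, with quantitative smallness from $E(v)<\epsilon_1$. Interior estimates come from H\'elein's small-energy regularity theorem for weakly harmonic maps from a 2-disk (Coulomb gauges plus Wente's inequality, which places $|\nabla v|^2$ in the Hardy space $\mathcal{H}^1_{\mathrm{loc}}$), giving $v\in C^\infty_{\mathrm{loc}}$ and $\|\nabla v\|_{L^4(B_r)}\to 0$ as $E(v)\to 0$ for $r<1$. Extending up to $\partial B_1$ requires a boundary regularity argument (a Qing-type boundary monotonicity, or a reflection/extension across $\partial B_1$ combined with the interior theory applied to the doubled map). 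Once this quantitative $L^4$ estimate is in place, the absorption in the previous paragraph is routine and the theorem follows.
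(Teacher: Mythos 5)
Your first three steps (expanding the square, integrating by parts against the weak harmonic map equation, and the geometric fact that the normal component of the chord $u-v$ is quadratic in $|u-v|$) are exactly the opening of the proof of Theorem 3.1 in \cite{CM3}, and they are correct: the theorem does reduce to absorbing $\int_{B_{1}}|\nabla_{0}v|^{2}|w|^{2}$ into $\tfrac14\int_{B_{1}}|\nabla_{0}w|^{2}$. The gap is in the absorption step. The global bound $\|\nabla_{0}v\|_{L^{4}(B_{1})}^{2}\leq\Psi(E(v))$ is not just hard to prove under the stated hypotheses --- it is false. The boundary datum of $v$ is only the $W^{1,2}$-trace of $u$, i.e.\ an $H^{1/2}(\partial B_{1})$ map, and already in the linear model (a scalar harmonic function $h=\mathrm{Re}\sum a_{n}z^{n}$ with $\sum n|a_{n}|^{2}<\infty$, which one can realize as a harmonic map by composing with a geodesic of $N$ and rescale to have arbitrarily small energy) the gradient generically fails to lie in $L^{4}$ up to $\partial B_{1}$. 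The boundary-regularity tools you invoke (Qing-type estimates, reflection) all require boundary data strictly better than $H^{1/2}$ --- typically continuous or $H^{1}(\partial B_{1})$ --- so they cannot close this step.

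The device used in \cite{CM3} avoids any boundary regularity of $v$ by exploiting instead that $w=u-v$ vanishes on $\partial B_{1}$. Interior $\epsilon$-regularity for weakly harmonic maps of small energy gives the mean value inequality $|\nabla_{0}v|^{2}(y)\leq Cs^{-2}\int_{B_{s}(y)}|\nabla_{0}v|^{2}$ for every ball $B_{s}(y)\subset B_{1}$; taking $s=\tfrac12(1-|y|)$ yields the pointwise bound $|\nabla_{0}v|^{2}(y)\leq C(1-|y|)^{-2}E(v)$, and then the Hardy inequality for $W^{1,2}_{0}(B_{1})$,
\begin{equation*}
\int_{B_{1}}\frac{|w|^{2}}{(1-|y|)^{2}}\,dy\leq C\int_{B_{1}}|\nabla_{0}w|^{2},
\end{equation*}
gives $\int_{B_{1}}|\nabla_{0}v|^{2}|w|^{2}\leq C\,E(v)\int_{B_{1}}|\nabla_{0}w|^{2}$ (this is the content of the auxiliary lemma in Section 3 of \cite{CM3} on functions satisfying a mean value inequality). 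Choosing $\epsilon_{1}$ with $C\epsilon_{1}\leq\tfrac14$ then finishes the proof along the lines you set up. So your skeleton is right, but the $L^{4}$ route must be replaced by the mean-value-plus-Hardy argument, which uses only interior estimates on $v$ and the vanishing of $w$ at the boundary.
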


\begin{remark}\label{energy gap remark 1}
Although this theorem is formulated when we use the standard metric $ds^{2}_{0}=dx^{2}+dy^{2}$ on $B_{1}$, we can still have inequality (\ref{energy gap inequality}), if we take another metric $ds^{2}$ on $B_{1}$ which is conformal to $ds^{2}_{0}$, since both sides of (\ref{energy gap inequality}) are conformal invariant. Therefore if we take the standard hyperbolic metric $ds^{2}_{-1}$ on a small ball as in the beginning of \S \ref{compactification for mappings}, inequality (\ref{energy gap inequality}) is still true only by changing the flat connection to the connection $\nabla$ of $ds^{2}_{-1}$.
\end{remark}

\begin{remark}\label{energy gap remark 2}
As talked in \cite[\S 4.2]{Z}, we can use the energy gap to control the $W^{1, 2}$-norm difference between a mapping defined on the unit disk with its corresponding energy minimizing harmonic mapping with the same boundary data. This theorem also implies the uniqueness of energy minimizing harmonic maps with energy less than $\epsilon_{1}$ and fixed boundary values \cite[Corollary 3.3]{CM3}.
\end{remark}

Based on this theorem, we have the following result which shows that deforming a mapping locally to the energy minimizing harmonic mapping is a continuous functional. This is a combination of \cite[Corollary 4.1 and 4.2]{Z}, so here we omit the proof.

\begin{corollary}\label{continuity of harmonic replacement}
(\cite[Corollary 3.4]{CM3}\cite[Corollary 4.1 and 4.2]{Z}) Let $\epsilon_{1}$ be given in the previous theorem. Suppose $u\in C^{0}(\overline{B}_{1})\cap W^{1,2}(B_{1})$ with energy $E(u)\leq\epsilon_{1}$, then there exists a unique energy minimizing harmonic map $v\in C^{0}(\overline{B}_{1})\cap W^{1,2}(B_{1})$ with the same boundary value as $u$. Set $\mathcal{M}=\{u\in C^{0}(\overline{B}_{1})\cap W^{1,2}(B_{1}): E(u)\leq\epsilon_{1}\}$.  If we denote $v$ by $H(u)$, then the map $H:\mathcal{M}\rightarrow \mathcal{M}$ is continuous w.r.t. the norm\footnote{Here the norm of $u\in C^{0}(\overline{B}_{1})\cap W^{1,2}(B_{1})$ is given by $\|u\|_{C^{0}(\overline{B}_{1})}+\|u\|_{W^{1,2}(B_{1})}$.} on $C^{0}(\overline{B}_{1})\cap W^{1,2}(B_{1})$.

Suppose that $\{u_{i}\}_{i\in\N}, u$ are defined on a ball $B_{1+\epsilon}$ with energy less than $\epsilon_{1}$, and $\lim_{i\rightarrow\infty}u_{i}= u$ in $C^{0}(\overline{B}_{1+\epsilon})\cap W^{1,2}(B_{1+\epsilon})$. Choose a sequence $r_{i}\rightarrow 1$, and let $w_{i}, w$ be the mappings which coincide with $u_{i}, u$ outside $r_{i}B_{1}$ and $B_{1}$ and are energy minimizing inside $r_{i}B_{1}$ and $B_{1}$ respectively. Then $w_{i}\rightarrow w$ in $C^{0}(\overline{B}_{1+\epsilon})\cap W^{1,2}(B_{1+\epsilon})$.
\end{corollary}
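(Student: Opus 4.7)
The plan is to deduce all three assertions from the energy gap of Theorem \ref{energy gap} combined with a boundary-interpolation construction, and to reduce the varying-domain statement to the fixed-domain one by a conformal rescaling.

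\textbf{Step 1: existence and uniqueness.} For existence I would apply the direct method: take a minimizing sequence $v_{k}$ for the Dirichlet energy among $W^{1,2}(B_{1},N)$ maps whose boundary trace equals $u|_{\partial B_{1}}$. Since $u$ itself is a competitor and $E(u)\leq \epsilon_{1}$, the sequence is $W^{1,2}$-bounded; a weak limit $v$ exists, lies below the $\epsilon$-regularity threshold so no concentration of energy can occur, and is the desired minimizer. Uniqueness is immediate from Theorem \ref{energy gap}: if $v,v'$ were two such minimizers with the same boundary trace and energy $\leq \epsilon_{1}$, applying inequality \eqref{energy gap inequality} with $u$ replaced by $v'$ gives $0=E(v')-E(v)\geq \tfrac{1}{2}\int|\nabla v -\nabla v'|^{2}$, forcing $v=v'$. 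The same inequality gives the a priori bound $\tfrac{1}{2}\int |\nabla u-\nabla H(u)|^{2}\leq E(u)-E(H(u))$ that drives the continuity argument below.

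\textbf{Step 2: continuity of $H$ on $\mathcal{M}$.} Suppose $u_{n}\to u$ in $C^{0}(\overline{B}_{1})\cap W^{1,2}(B_{1})$, and write $v_{n}=H(u_{n})$, $v=H(u)$. Embed $N$ isometrically in some $\mathbb{R}^{K}$ with nearest-point projection $\Pi$ on a tubular neighborhood. For a thin annulus $A_{\delta}=B_{1}\setminus B_{1-\delta}$ and a cutoff $\chi_{\delta}$ equal to $1$ on $B_{1-\delta}$ and $0$ on $\partial B_{1}$, set
\begin{equation}
\tilde{v}_{n}=\Pi\bigl(\chi_{\delta}v_{n}+(1-\chi_{\delta})(v_{n}+(u-u_{n}))\bigr),
\end{equation}
which, for $n$ large (so that $u-u_{n}$ is uniformly small), is a valid competitor for $H(u)$ because its trace on $\partial B_{1}$ equals $u|_{\partial B_{1}}$. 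A direct computation bounds $E(\tilde{v}_{n})-E(v_{n};B_{1-\delta})$ by the energy of $v_{n}$ on $A_{\delta}$ plus an $o(1)$ error controlled by $\|u_{n}-u\|_{C^{0}}+\|u_{n}-u\|_{W^{1,2}}$. Applying Theorem \ref{energy gap} to $\tilde{v}_{n}$ versus the minimizer $v$ gives
\begin{equation}
\tfrac{1}{2}\int_{B_{1}}|\nabla \tilde{v}_{n}-\nabla v|^{2}\leq E(\tilde{v}_{n})-E(v).
\end{equation}
A symmetric gluing using $v$ and $u_{n}$ produces a competitor for $v_{n}$, yielding $E(v_{n})\leq E(v)+o(1)$, so $E(v_{n})\to E(v)$. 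Sending $\delta\to 0$ after $n\to\infty$ and combining these gives $v_{n}\to v$ in $W^{1,2}(B_{1})$. For the $C^{0}$ part, I would invoke the $\epsilon$-regularity theorem for weakly harmonic maps of small energy to obtain uniform interior $C^{1,\alpha}$ estimates on the $v_{n}$, together with boundary regularity from the $C^{0}$-convergence of the traces $u_{n}|_{\partial B_{1}}\to u|_{\partial B_{1}}$, which upgrades $W^{1,2}$-convergence to $C^{0}(\overline{B}_{1})$-convergence.

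\textbf{Step 3: varying radii.} Pull back by the conformal scaling $\phi_{i}(x)=r_{i}x\colon B_{1}\to r_{i}B_{1}$. Set $\hat{u}_{i}=u_{i}\circ\phi_{i}$ and $\hat{w}_{i}=w_{i}\circ\phi_{i}$; by conformal invariance of the two-dimensional Dirichlet energy, $\hat{w}_{i}=H(\hat{u}_{i})$ on $B_{1}$. Since $r_{i}\to 1$ and $u_{i}\to u$ in $C^{0}(\overline{B}_{1+\epsilon})\cap W^{1,2}(B_{1+\epsilon})$, one checks $\hat{u}_{i}\to u$ in $C^{0}(\overline{B}_{1})\cap W^{1,2}(B_{1})$, so Step 2 gives $\hat{w}_{i}\to H(u)=w$. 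Reversing the rescaling and using that $w_{i}=u_{i}$ outside $r_{i}B_{1}$ and $w=u$ outside $B_{1}$ (together with the hypothesis on $u_{i}\to u$ in the enlarged domain) closes the desired convergence $w_{i}\to w$ in $C^{0}(\overline{B}_{1+\epsilon})\cap W^{1,2}(B_{1+\epsilon})$.

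\textbf{Main obstacle.} The most delicate point is controlling the annular term in Step 2: I need the energy of $v_{n}$ on the shrinking annulus $A_{\delta}$ to tend to $0$ as $\delta\to 0$, uniformly in $n$. Because the traces $u_{n}|_{\partial B_{1}}$ are only known to converge in $C^{0}$, this requires combining the $W^{1,2}$-equicontinuity coming from the $W^{1,2}$-convergence $u_{n}\to u$ with boundary $\epsilon$-regularity for the harmonic maps $v_{n}$, and sending $n\to\infty$ before $\delta\to 0$. Once this order-of-limits bookkeeping is in place, the rest is a clean combination of the energy gap with standard harmonic-map regularity.
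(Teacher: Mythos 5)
Your proposal is correct and follows essentially the same route as the proof this corollary relies on (the paper itself omits the argument, deferring to Corollary 3.3--3.4 of \cite{CM3} and Corollaries 4.1--4.2 of \cite{Z}): uniqueness and $W^{1,2}$-continuity from the convexity inequality of Theorem \ref{energy gap} applied to a competitor built by interpolating the boundary data across a thin annulus, the $C^{0}$ part from small-energy interior estimates plus boundary regularity, and the varying-radius statement by conformal rescaling. Your worry about the annular energy of $v_{n}$ is milder than you fear, since for fixed $\delta$ that term only enters through a cross term multiplied by $o_{n}(1)$ via Cauchy--Schwarz, so fixing $\delta$ and sending $n\rightarrow\infty$ already suffices for the $W^{1,2}$ conclusion.
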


\begin{remark}\label{remark of continuity of harmonic replacement}
If we use geodesic ball $B_{r}$ of geodesic radius $r\leq r_{0}$ on a hyperbolic surface $\Sigma_{0}$ with Poincar\'e metric, all the results of the above lemma hold. This is because that the Poincar\'e metric $ds^{2}_{-1}$ is conformal and uniformly equivalent to the flat metric $ds^{2}_{0}$, so harmonic maps w.r.t. $ds^{2}_{0}$ are also harmonic w.r.t. $ds^{2}_{-1}$, and the $C^{0}$ and $W^{1, 2}$-norms of a fixed map w.r.t. $ds^{2}_{-1}$ are uniformly equivalent to those w.r.t. $ds^{2}_{0}$.
\end{remark}


\subsection{Comparison results of successive harmonic replacements}

Now we will give a comparison result for successive harmonic replacements by adapting  \cite[Lemma 3.11]{CM3} and \cite[Lemma 4.2]{Z}. Fix a mapping $u\in W^{1, 2}(\Sigma_{0}, N)$. We still denote $\mathcal{B}$ as a finite collection of disjoint geodesic balls on $\Sigma_{0}$ as above. Given $\mu\in[0, 1]$, denote $\mu\mathcal{B}$ to be the collection of geodesic balls with the same centers as $\mathcal{B}$, but with geodesic radii $\mu$ timing those corresponding ones of $\mathcal{B}$. Suppose that $u$ has small energy on a collection $\mathcal{B}$. We denote $H(u, \mathcal{B})$ to be the mapping which coincides with $u$ outside $\mathcal{B}$, but are the energy minimizing ones inside $\mathcal{B}$ with the same boundary values as $u$ on $\partial\mathcal{B}$. We call $H$ the harmonic replacement in the following. If $\mathcal{B}_{1}, \mathcal{B}_{2}$ are two such collections, we denote $H(u, \mathcal{B}_{1}, \mathcal{B}_{2})$ to be $H\big(H(u, \mathcal{B}_{1}), \mathcal{B}_{2}\big)$. We have the following energy comparison results for $u$, $H(u, \mathcal{B}_{1})$ and $H(u, \mathcal{B}_{1}, \mathcal{B}_{2})$.

\begin{lemma}\label{comparison}
Fix a Riemann surface $\Sigma_{0}$ (of genus $g\geq 2$) with Poincar\'e metric, and a mapping $u\in C^{0}\cap W^{1,2}(\Sigma_{0}, N)$. Let $\mathcal{B}_{1}$,
$\mathcal{B}_{2}$ be two finite collections of disjoint geodesic balls on $\Sigma_{0}$ with the radius of each ball less than the injective radius of the center of that ball on $\Si_0$ and $r_{0}$ as (\ref{small radius}). If $E(u, \mathcal{B}_{i})\leq\frac{1}{3}\epsilon_{1}$ for $i=1,2$, with $\epsilon_{1}$ given in Theorem \ref{energy gap}, then there exists a constant $k$ depending on $N$, such that:
\begin{equation}\label{comparison inequality1}
E(u)-E[H(u, \mathcal{B}_{1}, \mathcal{B}_{2})]\geq k\bigg(E(u)-E[H(u, \frac{1}{4}\mathcal{B}_{2})]\bigg)^{2},
\end{equation}
and for any $\mu\in[\frac{1}{64}, \frac{1}{4}]$,
\begin{equation}\label{comparison inequality2}
\frac{1}{k}\big(E(u)-E[H(u, \mathcal{B}_{1})]\big)^{\frac{1}{2}}+E(u)-E[H(u, 4\mu\mathcal{B}_{2})]\geq E[H(u, \mathcal{B}_{1})]-E[H(u, \mathcal{B}_{1}, \mu\mathcal{B}_{2})].
\end{equation}
\end{lemma}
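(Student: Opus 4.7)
The plan is to follow the Colding--Minicozzi proof of Lemma 3.11 in \cite{CM3} (reformulated as Lemma 4.2 of \cite{Z}), with only the minor modifications forced by the hyperbolic background. As noted in the remark preceding the lemma, both inequalities are local, so one may pull each individual ball of $\mathcal{B}_{1}\cup\mathcal{B}_{2}$ back to the center of the Poincar\'e disk independently. Because every ball has radius at most $r_{0}$, on each such ball $ds^{2}_{-1}$ is conformal and uniformly equivalent to $ds^{2}_{0}$, so Remarks \ref{energy gap remark 1} and \ref{remark of continuity of harmonic replacement} let us invoke the energy gap Theorem \ref{energy gap} and the quantitative continuity of harmonic replacement from Corollary \ref{continuity of harmonic replacement} with constants depending only on $N$. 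The fixed factor $4$ between an inner ball ($\frac{1}{4}\mathcal{B}_{2}$ or $\mu\mathcal{B}_{2}$) and the corresponding outer ball ($\mathcal{B}_{2}$ or $4\mu\mathcal{B}_{2}$) supplies precisely the uniform inner/outer radius separation that Corollary \ref{continuity of harmonic replacement} requires in order to produce a continuity modulus that does not degenerate.

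For (\ref{comparison inequality1}), set $v_{1}=H(u,\mathcal{B}_{1})$, $w=H(u,\frac{1}{4}\mathcal{B}_{2})$, $D_{1}=E(u)-E(v_{1})$, $X=E(u)-E[H(u,\mathcal{B}_{1},\mathcal{B}_{2})]$, and $Y=E(u)-E(w)$. The energy gap Theorem \ref{energy gap} applied ball by ball on $\mathcal{B}_{1}$ gives $\|\nabla u-\nabla v_{1}\|_{L^{2}(\Sigma_{g})}^{2}\leq 2D_{1}$, and the quantitative form of Corollary \ref{continuity of harmonic replacement} applied on the pair $\frac{1}{4}\mathcal{B}_{2}\subset\mathcal{B}_{2}$ yields
\[
\bigl|E[H(v_{1},\tfrac{1}{4}\mathcal{B}_{2})]-E(w)\bigr|\leq C\,D_{1}^{1/2}.
\]
Since $H(v_{1},\mathcal{B}_{2})=H(u,\mathcal{B}_{1},\mathcal{B}_{2})$ is the absolute minimizer for its boundary on $\partial\mathcal{B}_{2}$ and $H(v_{1},\frac{1}{4}\mathcal{B}_{2})$ is an admissible competitor, chaining these inequalities produces $X\geq Y-C\,D_{1}^{1/2}$. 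If $D_{1}\geq Y^{2}/(4C^{2})$ then $X\geq D_{1}\geq Y^{2}/(4C^{2})$; otherwise $CD_{1}^{1/2}<Y/2$ and $X\geq Y/2$. The hypothesis $E(u,\mathcal{B}_{2})\leq\frac{1}{3}\epsilon_{1}$ bounds $Y$ by a constant depending only on $N$, so in the second case $X\geq Y/2\geq k\,Y^{2}$ as well, giving (\ref{comparison inequality1}).

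Inequality (\ref{comparison inequality2}) is established by the same scheme with $\mu\mathcal{B}_{2}$ and $4\mu\mathcal{B}_{2}$ in the roles of $\frac{1}{4}\mathcal{B}_{2}$ and $\mathcal{B}_{2}$: because $4\mu/\mu=4$ is independent of $\mu\in[\frac{1}{64},\frac{1}{4}]$, the continuity modulus coming from Corollary \ref{continuity of harmonic replacement} is uniform in $\mu$. Writing out the analogous chain of identities and using $E[H(u,4\mu\mathcal{B}_{2})]\leq E[H(u,\mu\mathcal{B}_{2})]$ (the bigger minimizing set gives smaller energy) produces directly
\[
E[H(u,\mathcal{B}_{1})]-E[H(u,\mathcal{B}_{1},\mu\mathcal{B}_{2})]\leq \bigl(E(u)-E[H(u,4\mu\mathcal{B}_{2})]\bigr)+\frac{1}{k}\,D_{1}^{1/2},
\]
which is exactly (\ref{comparison inequality2}). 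The main obstacle is verifying that the continuity constant supplied by Corollary \ref{continuity of harmonic replacement} is genuinely uniform over the whole family of balls involved and over all $\mu$ in the prescribed range; this uniformity is exactly what the radius bound $r_{0}$ and the fixed factor $4$ between successive radii are designed to secure, and no genuinely new analytic input beyond what is already available in \cite{CM3} and \cite{Z} should be required.
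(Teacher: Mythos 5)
Your overall case analysis and the reduction of both inequalities to a single estimate of the form
\begin{equation*}
E[H(v_{1},\tfrac{1}{4}\mathcal{B}_{2})]-E[H(u,\tfrac{1}{4}\mathcal{B}_{2})]\leq C\,D_{1}^{1/2},\qquad v_{1}=H(u,\mathcal{B}_{1}),\ D_{1}=E(u)-E(v_{1}),
\end{equation*}
is structurally sound, but that estimate is precisely the non-trivial content of the lemma, and you have not proved it: you attribute it to a ``quantitative form of Corollary \ref{continuity of harmonic replacement},'' whereas that corollary is purely qualitative (sequential continuity of $u\mapsto H(u)$ in $C^{0}\cap W^{1,2}$, with no modulus). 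Knowing $\int|\nabla u-\nabla v_{1}|^{2}\leq 2D_{1}$ from the energy gap does not by itself control the difference of replacement energies on a third family of balls: one must first select good radii by a Fubini argument so that the boundary traces of $u$ and $v_{1}$ on suitable circles satisfy $r\int_{\partial B_{r}}|\nabla_{0}u-\nabla_{0}v_{1}|^{2}\lesssim D_{1}$, and then build an explicit competitor by interpolating between the two boundary values on a thin annulus via Lemma \ref{construction from boundary}. That interpolation lemma requires $u$ and $v_{1}$ to \emph{agree at some point} of the chosen circle (this is what converts the $W^{1,2}$ control of $f'-g'$ into pointwise control of $f-g$), and securing this is exactly why the paper decomposes $\mathcal{B}_{2}$ into the sub-collection $\mathcal{B}_{2+}$ of balls whose quarter-balls are either contained in a ball of $\mathcal{B}_{1}$ or disjoint from $\mathcal{B}_{1}$ (where the estimate is trivial or follows linearly from energy minimality), and the sub-collection $\mathcal{B}_{2-}$ of partially overlapping balls (where $u=v_{1}$ somewhere on $\partial B_{r}$ by construction). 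Your proposal contains neither the decomposition nor the comparison-map construction, so the central inequality is assumed rather than derived.

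A secondary point: the uniformity you worry about at the end (the factor $4$ between inner and outer radii, the bound $r_{0}$) is indeed what makes the constants depend only on $N$, but it enters through the coarea selection of the radius $r\in[\frac{3}{4}r^{0}_{B},r^{0}_{B}]$ and the scaling-invariant form of Lemma \ref{construction from boundary}, not through any continuity modulus of Corollary \ref{continuity of harmonic replacement}. To repair the argument you should follow the paper: split $\mathcal{B}_{2}$ as above, handle $\mathcal{B}_{2+}$ by energy minimality plus Theorem \ref{energy gap}, and on each ball of $\mathcal{B}_{2-}$ construct the three-layer competitor ($v_{1}$ outside $B_{r}$, the interpolation $w$ on $B_{r}\setminus B_{r-\rho}$, a rescaled $H(u,B_{r})$ inside) to obtain the square-root bound, then combine.
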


\begin{remark}
The proof is similar to that of \cite[Lemma 4.2]{Z}. We will use the Euclidean metric which is conformal to the hyperbolic metric on each of the geodesic balls. Since the inequalities (\ref{comparison inequality1}) and (\ref{comparison inequality2}) are all conformal invariant, the proof in the Euclidean metrics implies that in hyperbolic metrics. By the energy minimizing properties, we can easily get the following inequality:
\begin{equation}\label{comparison inequality3}
E(u)-E[H(u, \mathcal{B}_{1}, \mathcal{B}_{2})]\geq E(u)-E[H(u, \frac{1}{4}\mathcal{B}_{1})].
\end{equation}
This is because that $E[H(u, \mathcal{B}_{1}, \mathcal{B}_{2})]\leq E[H(u, \mathcal{B}_{1})]\leq E[H(u, \frac{1}{4}\mathcal{B}_{1})]$. Combining the above inequalities, we get the comparison for energy of any two successive harmonic replacements by appropriately shrinking the radii.
\end{remark}

We need the following lemma to construct comparison maps. This is a scaling invariant version.

\begin{lemma}\label{construction from boundary}
(\cite[Lemma 3.14]{CM3}) There exists a $\delta>0$ and a large constant $C$ depending on $N$, such that for any $f,g\in C^{0}\cap W^{1,2}(\partial B_{R}, N)$, if $f,g$ are equal at some point on $\partial B_{R}$, and:
\begin{equation}
R\int_{\partial B_{R}}|f^{\prime}-g^{\prime}|^{2}\leq\delta^{2},
\end{equation}
then we can find some $\rho\in(0,\frac{1}{2}R]$, and a mapping $w\in C^{0}\cap W^{1,2}(B_{R}\backslash B_{R-\rho}, N)$ with $w|_{B_{R}}=f$, $w|_{B_{R-\rho}}=g$, which satisfies the estimates:
\begin{equation}
\int_{B_{R}\backslash B_{R-\rho}}|\nabla w|^{2}\leq C\big(R\int_{\partial B_{R}}|f^{\prime}|^{2}+|g^{\prime}|^{2}\big)^{\frac{1}{2}}\big(R\int_{\partial B_{R}}|f^{\prime}-g^{\prime}|^{2}\big)^{\frac{1}{2}}.
\end{equation}
\end{lemma}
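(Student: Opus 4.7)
The plan is to reduce to a scale-invariant setting, construct an explicit interpolation between $f$ and $g$ across a thin annulus by linear interpolation in an ambient Euclidean space followed by nearest-point projection to $N$, and then choose the thickness $\rho$ optimally. First I would normalize by rescaling: the statement is invariant under $x \mapsto Rx$ in the domain because each side of the conclusion transforms the same way, so I may assume $R=1$ and must recover estimates in terms of $\|f'\|_{L^2}$, $\|g'\|_{L^2}$, and $\|f'-g'\|_{L^2}$ on $S^1$.

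Next, fix an isometric embedding $N \hookrightarrow \mathbb{R}^K$ (Nash) with tubular neighborhood $N_\eta$ of width $\eta>0$ and nearest-point projection $\pi: N_\eta \to N$, both depending only on $N$. Since $f$ and $g$ agree at one point $\theta_0 \in S^1$ and $\|f'-g'\|_{L^2(S^1)} \le \delta$, the fundamental theorem of calculus together with Cauchy–Schwarz on $S^1$ gives
\begin{equation}
\|f-g\|_{C^0(S^1)} \le C_0 \|f'-g'\|_{L^2(S^1)} \le C_0 \delta.
\end{equation}
Choosing $\delta$ small enough in terms of $N$, the straight line between $f(\theta)$ and $g(\theta)$ in $\mathbb{R}^K$ stays in $N_\eta$. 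This also yields the Poincaré-type bound $\|f-g\|_{L^2(S^1)} \le C_1 \|f'-g'\|_{L^2(S^1)}$, which will be the key ingredient that converts a derivative gap on $\partial B_R$ into a pointwise closeness of $f$ and $g$.

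Then I would introduce polar coordinates $(r,\theta)$ on the annulus $B_1 \setminus B_{1-\rho}$ and define, in $\mathbb{R}^K$,
\begin{equation}
\tilde w(r,\theta) \;=\; \bigl(1-\phi(r)\bigr)\,f(\theta) \;+\; \phi(r)\,g(\theta),
\end{equation}
where $\phi:[1-\rho,1]\to[0,1]$ is the linear cut-off $\phi(1)=0$, $\phi(1-\rho)=1$. Set $w := \pi \circ \tilde w$. Since $\pi$ is $1$-Lipschitz on the image and its derivative is uniformly bounded on $N_\eta$, a direct computation bounds $|\nabla w|^2$ in the annulus by a constant times $\tfrac{1}{\rho^2}|f(\theta)-g(\theta)|^2 + |f'(\theta)|^2 + |g'(\theta)|^2$. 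Integrating in $r$ over a layer of width $\rho$ (the area element $r\,dr\,d\theta$ is comparable to $dr\,d\theta$ since $\rho \le 1/2$) yields
\begin{equation}
\int_{B_1\setminus B_{1-\rho}} |\nabla w|^2 \;\le\; C_2\Bigl(\tfrac{1}{\rho}\,\|f-g\|_{L^2(S^1)}^2 + \rho\bigl(\|f'\|_{L^2(S^1)}^2+\|g'\|_{L^2(S^1)}^2\bigr)\Bigr).
\end{equation}

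Finally I would optimize by choosing
\begin{equation}
\rho \;=\; \frac{\|f-g\|_{L^2(S^1)}}{\bigl(\|f'\|_{L^2(S^1)}^2+\|g'\|_{L^2(S^1)}^2\bigr)^{1/2}},
\end{equation}
which balances the two terms; smallness of $\delta$ guarantees $\rho\le 1/2$. The resulting bound is $C_3\,\|f-g\|_{L^2(S^1)}\bigl(\|f'\|_{L^2(S^1)}^2+\|g'\|_{L^2(S^1)}^2\bigr)^{1/2}$, into which I substitute the Poincaré inequality above to replace $\|f-g\|_{L^2(S^1)}$ by $\|f'-g'\|_{L^2(S^1)}$. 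Reinserting the factors of $R$ by undoing the scaling then produces exactly the claimed inequality. The main subtlety is not the calculation but the very first step: securing enough $C^0$ closeness of $f$ and $g$ (from derivative $L^2$-closeness plus one common value) to legitimize the projection $\pi$; everything else is a scale-invariant interpolation bookkeeping once that is in place.
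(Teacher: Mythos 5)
The paper does not actually prove this lemma --- it is quoted directly from Lemma 3.14 of \cite{CM3} --- and your construction (linear interpolation in a Nash embedding followed by nearest-point projection, with the common point plus Cauchy--Schwarz supplying the $C^{0}$ closeness needed to stay in the tubular neighborhood, then optimizing the annulus width) is exactly the standard argument used there. The one slip is the claim that smallness of $\delta$ guarantees $\rho\le\frac{1}{2}$: your optimal $\rho^{*}=\|f-g\|_{L^{2}}/(\|f'\|_{L^{2}}^{2}+\|g'\|_{L^{2}}^{2})^{1/2}$ is a ratio whose denominator can be as small as its numerator, and the chain $\|f-g\|_{L^{2}}\le C\|f'-g'\|_{L^{2}}\le C'(\|f'\|_{L^{2}}^{2}+\|g'\|_{L^{2}}^{2})^{1/2}$ only bounds this ratio by a universal constant (larger than $\frac{1}{2}$), so shrinking $\delta$ does not help. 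The fix is routine: take $\rho=\min\{\rho^{*},\frac{1}{2}\}$; in the capped case one has $(\|f'\|_{L^{2}}^{2}+\|g'\|_{L^{2}}^{2})^{1/2}\le 2\|f-g\|_{L^{2}}\le C\|f'-g'\|_{L^{2}}$, so the term $\rho\,(\|f'\|_{L^{2}}^{2}+\|g'\|_{L^{2}}^{2})$ is still dominated by $C\|f'-g'\|_{L^{2}}(\|f'\|_{L^{2}}^{2}+\|g'\|_{L^{2}}^{2})^{1/2}$ and the stated inequality follows in both cases.
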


~\\
\begin{proof}
(of Lemma \ref{comparison}) Here we will adapt the proof of \cite[Lemma 4.2]{Z}. Since we assume that $E(u, \mathcal{B}_{i})\leq\frac{1}{3}\epsilon_{1}$, we know that $u$ and $H(u, \mathcal{B}_{1})$ have energy less than $\frac{2}{3}\epsilon_{1}$ on $\mathcal{B}_{1}\cup\mathcal{B}_{2}$, so we can use energy gaps to control $W^{1, 2}$-norms difference by Theorem \ref{energy gap}. Denote balls in $\mathcal{B}_{1}$ by $B^{1}_{\alpha}$, and balls in $\mathcal{B}_{2}$ by $B^{2}_{j}$. We prove the two inequalities separately.

\vspace{0.5em}
\textbf{$1^{\circ}$ Inequality (\ref{comparison inequality1}):} We divide the second collection $\mathcal{B}_{2}$ into two sub-collections $\mathcal{B}_{2}=\mathcal{B}_{2+}\cup\mathcal{B}_{2-}$, where $\mathcal{B}_{2+}=\{B^{2}_{j}: \frac{1}{4}B^{2}_{j}\subset B^{1}_{\alpha}\, or\ \frac{1}{4}B^{2}_{j}\cap\mathcal{B}_{1}=\emptyset\ for\ some\ B^{1}_{\alpha}\in\mathcal{B}_{1}\}$ and $\mathcal{B}_{2-}=\mathcal{B}_{2}\setminus\mathcal{B}_{2+}$, and deal with them separately.

For collection $\mathcal{B}_{2+}$, we separate it into another two sub-collections $\{\frac{1}{4}B^{2}_{j}\cap\mathcal{B}_{1}=\emptyset\}$ and $\{\frac{1}{4}B^{2}_{j}\subset B^{1}_{\alpha}\}$. For balls $\frac{1}{4}B^{2}_{j}\cap\mathcal{B}_{1}=\emptyset$, we can use the energy minimizing property of small energy harmonic maps as in Remark \ref{energy gap remark 2}, and similar arguments as \cite[(18)(19)]{Z} to get,
\begin{equation}
\sum_{\{\frac{1}{4}B^{2}_{j}\cap\mathcal{B}_{1}=\emptyset\}}\big(E(u)-E[H(u, \frac{1}{4}B^{2}_{j})]\big)\leq E(u)-E[H(u, \mathcal{B}_{1}, \cup_{\frac{1}{4}B^{2}_{j}\cap\mathcal{B}_{1}=\emptyset}B^{2}_{j})].
\end{equation}

For balls $\frac{1}{4}B^{2}_{j}\subset B^{1}_{\alpha}$, $H(u, \mathcal{B}_{1}, \frac{1}{4}B^{2}_{j})=H(u, \mathcal{B}_{1})$. We denote $u_{1}=H(u, \mathcal{B}_{1})$. Using energy minimizing property of small energy harmonic maps again, and similar arguments as \cite[(20)(21)]{Z}, we have,
\begin{equation}
\begin{split}
\int_{\underset{\frac{1}{4}B^{2}_{j}\subset B^{1}_{\alpha}}{\cup}B^{2}_{j}}|\nabla u|^{2}
&-|\nabla H(u, \frac{1}{4}B^{2}_{j})|^{2} \leq \int_{\underset{\frac{1}{4}B^{2}_{j}\subset B^{1}_{\alpha}}{\cup}B^{2}_{j}}|\nabla u|^{2}-|\nabla H(u, \mathcal{B}_{1}, B^{2}_{j})|^{2}\\
&\leq \int_{\underset{\frac{1}{4}B^{2}_{j}\subset B^{1}_{\alpha}}{\cup}B^{2}_{j}} |\nabla u|^{2}-|\nabla u_{1}|^{2}+\int_{\underset{\frac{1}{4}B^{2}_{j}\subset B^{1}_{\alpha}}{\cup}B^{2}_{j}}|\nabla u_{1}|^{2}-|\nabla H(u, \mathcal{B}_{1}, B^{2}_{j})|^{2}
\end{split}
\end{equation}
The second $``\leq"$ of the above is gotten by adding a term $\int_{\underset{\frac{1}{4}B^{2}_{j}\subset B^{1}_{\alpha}}{\cup}B^{2}_{j}}|\nabla u_{1}|^{2}$ and subtracting a same term after the first $``\leq"$. For the first term, using Theorem \ref{energy gap} and Remark \ref{energy gap remark 1}, we have that $\int_{\underset{\frac{1}{4}B^{2}_{j}\subset B^{1}_{\alpha}}{\cup}B^{2}_{j}}|\nabla u|^{2}-|\nabla u_{1}|^{2}\leq\int_{\underset{\frac{1}{4}B^{2}_{j}\subset B^{1}_{\alpha}}{\cup}B^{2}_{j}}|\nabla u-\nabla u_{1}|^{2}\leq 4\Big(E(u)-E(u_{1})\Big)$. The second term is bounded from above by $E(u_{1})-E[H(u_{1}, \underset{\frac{1}{4}B^{2}_{j}\subset B^{1}_{\alpha}}{\cup}B^{2}_{j})]\leq E(u)-E[H(u, \mathcal{B}_{1}, \underset{\frac{1}{4}B^{2}_{j}\subset B^{1}_{\alpha}}{\cup}B^{2}_{j})]$. So combining the above estimates together, we get inequality,
\begin{equation}
E(u)-E[H(u, \frac{1}{4}\mathcal{B}_{2+})]\leq C\big(E(u)-E[H(u, \mathcal{B}_{1}, \mathcal{B}_{2+})]\big).
\end{equation}

Now let us consider the sub-collection $\mathcal{B}_{2-}$. Here we deal with balls individually. Fix a $B^{2}_{j}\in\mathcal{B}_{2-}$, then $\frac{1}{4}B^{2}_{j}\cap B^{1}_{\alpha}\neq\emptyset$ for some $B^{1}_{\alpha}\in\mathcal{B}_{1}$, but $\frac{1}{4}B^{2}_{j}$ does not belong to any $B^{1}_{\alpha}\in\mathcal{B}_{1}$. Using discussions about small geodesic balls in the beginning of \S \ref{compactification for mappings}, we can identify this $B^{2}_{j}$ with a sub-disk centered at the origin of the Poincar\'e disk, and model it by $(B(0, r^{0}_{B}), \frac{ds^{2}_{0}}{(1-|x|^{2})^{2}})$. Simply denote it by $B_{r^{0}_{B}}$, and denote $u_{1}=H(u, \mathcal{B}_{1})$ as above. Lower subindex here is used to denote the radius of that ball w.r.t. $ds^{2}_{0}$. Now let us construct an auxiliary comparison map. Using Co-area formula, there exists a subset of $[\frac{3}{4}r^{0}_{B}, r^{0}_{B}]$ with measure $\frac{1}{36}r^{0}_{B}$, such that for any $r$ in this subset, we have,
\begin{equation}
\int_{\partial B_{r}}|\nabla_{0}u_{1}-\nabla_{0}u|^{2}\leq\frac{9}{r^{0}_{B}}\int^{r^{0}_{B}}_{\frac{3}{4}r^{0}_{B}}\int_{\partial B_{s}}|\nabla_{0}u_{1}-\nabla_{0}u|^{2}\leq\frac{9}{r}\int_{B_{r^{0}_{B}}}|\nabla_{0}u_{1}-\nabla_{0}u|^{2},
\end{equation}
\begin{equation}
\int_{\partial B_{r}}|\nabla_{0}u_{1}|^{2}+|\nabla_{0}u|^{2}\leq\frac{9}{r^{0}_{B}}\int^{r^{0}_{B}}_{\frac{3}{4}r^{0}_{B}}\int_{\partial B_{s}}|\nabla_{0}u_{1}|^{2}+|\nabla_{0}u|^{2}\leq\frac{9}{r}\int_{B_{r^{0}_{B}}}|\nabla_{0}u_{1}|^{2}+|\nabla_{0}u|^{2},
\end{equation}
where $\nabla_{0}$ is the connection of $ds^{2}_{0}$. By choosing $\epsilon_{1}$ small enough, we can make $r\int_{\partial B_{r}}|\nabla_{0}u_{1}|^{2}+|\nabla_{0}u|^{2}\leq\delta^{2}$ and $r\int_{\partial B_{r}}|\nabla_{0}u_{1}-\nabla_{0}u|^{2}\leq\delta^{2}$ with $\delta$ as in Lemma \ref{construction from boundary}. Since $\frac{1}{4}B_{r^{0}_{B}}\subset B_{\frac{1}{2}r^{0}_{B}}$ as discussed in the beginning of \S \ref{compactification for mappings}, and that $B_{r^{0}_{B}}\in\mathcal{B}_{2-}$, $B_{\frac{1}{2}r^{0}_{B}}$ and hence $B_{r}$ must intersect a ball in $\mathcal{B}_{1}$ but is not contained in any ball of $\mathcal{B}_{1}$, so $u$ and $u_{1}$ must coincide at least one point on $\partial B_{r}$. So by Lemma \ref{construction from boundary}, $\exists \rho\in(0,\frac{1}{2}r]$ and $\exists w\in C^{0}\cap W^{1,2}(B_{r}\backslash B_{r-\rho})$ with $w|_{\partial B_{r}}=u_{1}|_{\partial B_{r}}$, $w|_{\partial B_{r-\rho}}=u|_{\partial B_{r}}$, and:
\begin{equation}\label{construction of w}
\begin{split}
\int_{B_{r}\backslash B_{r-\rho}}|\nabla_{0}w|^{2}
&\leq C\big(r\int_{\partial B_{r}}|\nabla_{0}u_{1}-\nabla_{0}u|^{2}\big)^{\frac{1}{2}}\big(r\int_{\partial B_{r}}|\nabla_{0}u_{1}|^{2}+|\nabla_{0}u|^{2}\big)^{\frac{1}{2}}\\
&\leq C\big(\int_{B_{r^{0}_{B}}}|\nabla_{0}u_{1}-\nabla_{0}u|^{2}\big)^{\frac{1}{2}}\big(\int_{B_{r^{0}_{B}}}|\nabla_{0}u_{1}|^{2}+|\nabla_{0}u|^{2}\big)^{\frac{1}{2}}.
\end{split}
\end{equation}

Now construct comparison map $v$ on $B_{r^{0}_{B}}$ such that:
\begin{displaymath}
v = \left\{ \begin{array}{ll}
u_{1} & \textrm{on $B_{r^{0}_{B}}\backslash B_{r}$}\\
w & \textrm{on $B_{r}\backslash B_{r-\rho}$}\\
H(u, B_{r})(\frac{r}{r-\rho} x) & \textrm{on $B_{r-\rho}$}
\end{array} \right..
\end{displaymath}
In the last equation, we do a rescaling w.r.t. the flat coordinates. Now $E[H(u_{1}, B_{r^{0}_{B}})]\leq E(v)$ on $B_{r^{0}_{B}}$, since $H(u_{1}, B_{r^{0}_{B}})$ is the energy minimizing harmonic map among all maps with the same boundary values. So:
\begin{equation}
\begin{split}
&\int_{B_{r^{0}_{B}}}|\nabla_{0}H(u_{1}, B_{r^{0}_{B}})|^{2}\leq\int_{B_{r^{0}_{B}}}|\nabla_{0}v|^{2}\\
&=\int_{B_{r^{0}_{B}}\backslash B_{r}}|\nabla_{0}u_{1}|^{2}+\int_{B_{r}\backslash B_{r-\rho}}|\nabla_{0}w|^{2}+\int_{B_{r-\rho}}|\nabla_{0}H(u, B_{r})(\frac{r}{r-\rho}\ \cdot)|^{2}\\
&=\int_{B_{r^{0}_{B}}\backslash B_{r}}|\nabla_{0}u_{1}|^{2}+\int_{B_{r}\backslash B_{r-\rho}}|\nabla_{0}w|^{2}+\int_{B_{r}}|\nabla_{0}H(u, B_{r})|^{2}.
\end{split}
\end{equation}
Now since $\frac{1}{4}B_{r^{0}_{B}}\subset B_{\frac{1}{2}r^{0}_{B}}\subset B_{r}$, we have:
\begin{equation}
\begin{split}
&\int_{\frac{1}{4}B_{r^{0}_{B}}}|\nabla_{0}u|^{2}-\int_{\frac{1}{4}B_{r^{0}_{B}}}|\nabla_{0}H(u, \frac{1}{4}B_{r^{0}_{B}})|^{2}\leq\int_{B_{r}}|\nabla_{0}u|^{2}-\int_{B_{r}}|\nabla_{0}H(u, B_{r})|^{2}\\
&\leq\int_{B_{r}}|\nabla_{0}u|^{2}-\int_{B_{r^{0}_{B}}}|\nabla_{0}H(u_{1},
B_{r^{0}_{B}})|^{2}+\int_{B_{r}\backslash B_{r-\rho}}|\nabla_{0}w|^{2}+\int_{B_{r^{0}_{B}}\backslash B_{r}}|\nabla_{0}u_{1}|^{2}\\
&\leq\int_{B_{r^{0}_{B}}}|\nabla_{0}u_{1}|^{2}-\int_{B_{r^{0}_{B}}}|\nabla_{0}H(u_{1},
B_{r^{0}_{B}})|^{2}+\int_{B_{r}\backslash B_{r-\rho}}|\nabla_{0}w|^{2}+\int_{B_{r}}|\nabla_{0}u|^{2}-\int_{B_{r}}|\nabla_{0}u_{1}|^{2}.
\end{split}
\end{equation}
Now we can use the conformal invariance for energy integral to change all the flat connection $\nabla_{0}$ and flat metric $ds^{2}_{0}$ to hyperbolic connection $\nabla$ and hyperbolic metric $ds^{2}_{-1}$. Summing the above inequality on all balls in $\mathcal{B}_{2-}$, and using Theorem \ref{energy gap} and Remark \ref{energy gap remark 1} together with inequality (\ref{construction of w}), we can get the following inequality by similar arguments as those in \cite[(29)(30)]{Z}:
\begin{equation}
E(u)-E[H(u, \frac{1}{4}\mathcal{B}_{2-})]\leq C^{\prime}\big(E(u)-E[H(u, \mathcal{B}_{1}, \mathcal{B}_{2})]\big)^{\frac{1}{2}}.
\end{equation}
Combing inequalities on $\mathcal{B}_{2+}$ and $\mathcal{B}_{2-}$, we get the inequality (\ref{comparison inequality1}).

\vspace{0.5em}
\textbf{$2^{\circ}$ Inequality (\ref{comparison inequality2}):} We divide $\mathcal{B}_{2}$ into two disjoint sub-collections $\mathcal{B}_{2+}$ and $\mathcal{B}_{2-}$, with $\mathcal{B}_{2+}=\{B^{2}_{j}:\mu B^{2}_{j}\subset B^{1}_{\alpha}\ or\ \mu B^{2}_{j}\cap\mathcal{B}_{1}=\emptyset\}$. For collection $\mathcal{B}_{2+}$, similar method also gives:
\begin{equation}
E[H(u, \mathcal{B}_{1})]-E[H(u, \mathcal{B}_{1}, \mu\mathcal{B}_{2+})]\leq E(u)-E[H(u, 4\mu\mathcal{B}_{2+})].
\end{equation}

For subcollection $\mathcal{B}_{2-}$, we use similar proof as above. Here we identify $4\mu B^{2}_{j}$ with a sub-disk centered at the origin of the Poincar\'e disk again, and get an isometric representation $(B_{r^{0}_{B}}, ds^{2}_{-1})$. In the construction of $w$, we change the role of $u$ and $u_{1}$. Let the comparison map be,
\begin{displaymath}
v = \left\{ \begin{array}{ll}
u & \textrm{on $B_{r^{0}_{B}}\backslash B_{r}$}\\
w & \textrm{on $B_{r}\backslash B_{r-\rho}$}\\
H(u_{1}, B_{r})(\frac{r}{r-\rho}\ x) & \textrm{on $B_{r-\rho}$}
\end{array} \right..
\end{displaymath}
We have $\int_{B_{r^{0}_{B}}}|\nabla_{0}H(u, B_{r^{0}_{B}})|^{2}\leq\int_{B_{r^{0}_{B}}}|\nabla_{0}v|^{2}$ by the energy minimizing property. Since we have $\mu B^{2}_{j}=\frac{1}{4}B_{r^{0}_{B}}\subset B_{\frac{1}{2}r^{0}_{B}}$, by argument similar to \cite[(34)(35)[36)]{Z}, we can get,
\begin{equation}
E(u_{1})-E[H(u_{1}, \mu\mathcal{B}_{2-})]\leq E(u)-E[H(u, 4\mu\mathcal{B}_{2-})]+C\big(E(u)-E(u_{1})\big)^{\frac{1}{2}}.
\end{equation}
Combining results on $\mathcal{B}_{2+}$ and $\mathcal{B}_{2-}$, we get inequality (\ref{comparison inequality2}).
\end{proof}


\subsection{Construction of the deformation map}\label{construction of the deformation map}

Let us discuss harmonic replacements on a sweep out- $\big(\gamma(t), \tau(t)\big)\in\tilde{\Omega}$ now. The normalized Fuchsian models of $\tau(t)$ are given by $(\Sigma_{\tau(t)}, \Gamma_{\tau(t)})$, and denote the injective radius of $\Sigma_{\tau(t)}$ by $r_{\tau(t)}$. First, let us point out where to do harmonic replacements. Fix a time parameter $t\in(0, 1)$. Suppose that $B$ is a geodesic ball on $\Sigma_{\tau(t)}$, with radius $r_{B}$ less than the injective radius of the center of $B$ on $\Si_{\tau(t)}$. As discussed in the beginning of \S \ref{compactification for mappings}, we can view $\gamma(t)$ as been defined on the upper half plane $\mathbb{H}$ by lifting up using $\pi_{\tau(t)}: \mathbb{H}\rightarrow\Sigma_{\tau(t)}$. Since $\{\tau(t)\}$ is a compact set in $\mathcal{T}_{g}$, we can always pick one connected component of the pre-images $\pi_{\tau(t)}^{-1}(B)$ inside a fix compact subset $K\subset\mathbb{H}$. Denote that connected component still by $B$, then obviously it has radius $r_{B}$ w.r.t the hyperbolic metric $ds^{2}_{-1}$ of $\mathbb{H}$. Moreover $B$ is a standard ball in $\mathbb{H}$ w.r.t. the flat metric $ds^{2}_{0}$. By the continuity of $\tau(t)$, for parameter $|s-t|\ll 1$, the image of this ball $B$ under $\pi_{\tau(s)}:\mathbb{H}\rightarrow\Si_{\tau(s)}$ is also a geodesic ball with radius less than the injective radius of the center of that ball on $\Si_{\tau(s)}$.
Denoting the image by $B$ again, we will do harmonic replacement simultaneously on $B\subset\Sigma_{\tau(s)}$ for $|s-t|\ll 1$.

When $|s-t|\ll 1$, let us pick up a continuous cutoff function $\mu(s)$, such that $\mu(s)=1$ for $|s-t|\leq \de/2$, and $\mu(s)=0$ for $|s-t|>\delta$ with $\delta>0$ small enough. If we do harmonic replacements for $\gamma(s)$ on balls $\mu(s)B$, Corollary \ref{continuity of harmonic replacement} and Remark \ref{remark of continuity of harmonic replacement} together with the definition of continuity of sweep-outs (\S \ref{notations}) directly imply that we get another continuous sweep-out in $\tilde{\Omega}$. Similarly, we can continuously shrink the radii on balls $\mu(s)B$ where we do harmonic replacements continuously to $0$, so that the new sweep-out can be continuously deformed back to the original one in $\tilde{\Omega}$, which implies that they lie in the same homotopy class by the definition of homotopy equivalence in \S \ref{notations}.

The strategy to construct the deformation map is to first do harmonic replacement on a collection of disjoint geodesic balls where the energy decrease is almost maximal, and then use Lemma \ref{comparison} to get estimate of form (\ref{compactification formula}) for any other harmonic replacements on collection of balls with small energy. For $\sigma\in C^{0}\cap W^{1, 2}(\Sigma_{\tau}, N)$, $\epsilon\in(0, \epsilon_{1}]$, define the maximal possible energy decrease as,
\begin{equation}\label{e_epsilon, sigma}
e_{\epsilon, \sigma}=\sup_{\mathcal{B}}\{E\big(\sigma, \tau\big)-E[H(\sigma, \frac{1}{4}\mathcal{B}), \tau]\},
\end{equation}
where $\mathcal{B}$ are chosen as any finite collection of disjoint geodesic balls on $\Sigma_{\tau}$ with the radius of each ball less than the injective radius of the center of that ball on $\Si_{\tau}$, and $r_0$ as in (\ref{small radius}),
satisfying: $E\big(\sigma, \mathcal{B}\big)\leq\epsilon$. When $\sigma$ is not harmonic, we always have that $e_{\epsilon, \sigma}>0$. Now for a sweep-out $\big(\sigma(t), \tau(t)\big)\in\tilde{\Omega}$, we have the following continuity property similar to \cite[Lemma 3.34]{CM3} and \cite[Lemma 4.4]{Z}.

\begin{lemma}\label{continuity of maximal energy decrease}
$\forall t\in(0,1)$, if $\sigma(t)$ is not harmonic, there exists a neighborhood $I^{t}\subset(0,1)$ of $t$ depending on $t$, $\epsilon$ and $\sigma$, such that $\forall s\in 2 I^{t}$\footnote{$2 I^t$ means the interval with the same center as $I^t$, but twice the length.}.
\begin{equation}
e_{\frac{1}{2}\epsilon, \sigma(s)}\leq 2 e_{\epsilon, \sigma(t)}.
\end{equation}
\end{lemma}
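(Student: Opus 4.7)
The plan is to adapt the proof of Lemma 3.34 in \cite{CM3} and Lemma 4.4 in \cite{Z}, accounting for the varying conformal structures $\tau(s)$ on the domain. Since $\sigma(t)$ is assumed non-harmonic, a single small geodesic ball on which the harmonic replacement differs non-trivially from $\sigma(t)$ already yields $e_{\epsilon,\sigma(t)}>0$, so the right-hand side of the claimed inequality is strictly positive and there is room to absorb small error terms.

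First I would fix an arbitrary finite collection $\mathcal{B}_s$ of disjoint geodesic balls on $\Sigma_{\tau(s)}$ (with radii bounded by $r_0$ as in \eqref{small radius} and by the injectivity radius of $\Sigma_{\tau(s)}$) satisfying $E(\sigma(s),\mathcal{B}_s)\leq\tfrac{1}{2}\epsilon$, and transfer it to a comparable collection on $\Sigma_{\tau(t)}$. Concretely: pick a fundamental domain $F_t$ for $\Gamma_{\tau(t)}$ in $\mathbb{H}$ and enlarge it to a compact set $K\supset F_t$. By the continuous dependence of normalized Fuchsian groups on $\tau$ (Proposition \ref{uniformization proposition} and the Fricke space discussion of Section \ref{notations}), for $s$ sufficiently close to $t$ a fundamental domain $F_s$ for $\Gamma_{\tau(s)}$ stays inside $K$. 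Lift each ball of $\mathcal{B}_s$ to its unique representative in $F_s$, then project via $\pi_{\tau(t)}$ to obtain a collection $\mathcal{B}_t$ of geodesic balls on $\Sigma_{\tau(t)}$ whose centers and radii differ from those of $\mathcal{B}_s$ by amounts controlled by $|s-t|$ through Lemma \ref{cont2}.

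Next, by the definition of continuity of $(\sigma(s),\tau(s))$ in Section \ref{notations}, the lifts of $\sigma(s)$ restricted to $K$ converge in $C^0\cap W^{1,2}(K,N)$ to the lift of $\sigma(t)$; a Cauchy--Schwarz estimate on the difference then gives
\begin{equation*}
\bigl|E(\sigma(s),\mathcal{B}_s)-E(\sigma(t),\mathcal{B}_t)\bigr|\leq\eta(|s-t|),
\end{equation*}
with $\eta\to 0$ as $s\to t$ \emph{uniformly} over all admissible collections, since the $C^0\cap W^{1,2}$ norm of $\sigma(s)-\sigma(t)$ on $K$ controls the energy difference on every sub-domain of $K$ simultaneously. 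Choosing $I^t$ so that $\eta\leq\tfrac12\epsilon$ on $2I^t$, we secure $E(\sigma(t),\mathcal{B}_t)\leq\epsilon$, so that $\mathcal{B}_t$ is admissible for the definition \eqref{e_epsilon, sigma} of $e_{\epsilon,\sigma(t)}$. Corollary \ref{continuity of harmonic replacement} together with Remark \ref{remark of continuity of harmonic replacement} applied on each ball of $\tfrac14\mathcal{B}_s$ versus $\tfrac14\mathcal{B}_t$ then shows that the corresponding harmonic replacements are also close in $C^0\cap W^{1,2}$, hence
\begin{equation*}
\bigl|E[H(\sigma(s),\tfrac{1}{4}\mathcal{B}_s),\tau(s)]-E[H(\sigma(t),\tfrac{1}{4}\mathcal{B}_t),\tau(t)]\bigr|\leq\eta'(|s-t|),
\end{equation*}
with $\eta'\to 0$ uniformly. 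Combining the two estimates yields, for $s\in 2I^t$ and any admissible $\mathcal{B}_s$,
\begin{equation*}
E(\sigma(s),\tau(s))-E[H(\sigma(s),\tfrac14\mathcal{B}_s),\tau(s)]\leq e_{\epsilon,\sigma(t)}+\eta+\eta'.
\end{equation*}
Taking the supremum over $\mathcal{B}_s$ and shrinking $I^t$ further so that $\eta+\eta'\leq e_{\epsilon,\sigma(t)}$ (possible precisely because $e_{\epsilon,\sigma(t)}>0$) gives $e_{\tfrac12\epsilon,\sigma(s)}\leq 2e_{\epsilon,\sigma(t)}$, as claimed.

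The main obstacle is the \emph{uniformity} of the closeness estimates over all admissible collections $\mathcal{B}_s$: the supremum in \eqref{e_epsilon, sigma} may be approached by collections containing many small balls, so ball-by-ball continuity of harmonic replacement would be insufficient. This is resolved by working globally on the compact set $K\subset\mathbb{H}$ and exploiting that the $C^0\cap W^{1,2}$-continuity of $\sigma(\cdot)$, the quasi-conformal continuity of $\pi_{\tau(\cdot)}$ from Lemma \ref{cont2}, and the harmonic replacement continuity of Corollary \ref{continuity of harmonic replacement} are all controlled by global norms on $K$ rather than by ball-dependent quantities; this produces a neighborhood $I^t$ depending only on $t$, $\epsilon$, and the path $(\sigma,\tau)$.
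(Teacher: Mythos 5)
Your proposal is correct and follows essentially the same route as the paper's proof: transfer collections between $\Sigma_{\tau(s)}$ and $\Sigma_{\tau(t)}$ by lifting to a fixed compact subset of $\mathbb{H}$, use the $C^{0}\cap W^{1,2}$ continuity of $\sigma$ and of harmonic replacement to compare the two energy decreases, and absorb the errors using $e_{\epsilon,\sigma(t)}>0$. The only cosmetic difference is that the paper starts from a collection achieving at least $\tfrac{3}{4}e_{\frac{1}{2}\epsilon,\sigma(s)}$ and bounds that single quantity, whereas you bound an arbitrary admissible collection and take the supremum afterward; the uniformity concern you raise is handled in the paper at the same (qualitative) level via Corollary \ref{continuity of harmonic replacement}.
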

\begin{proof}
Since $e_{\epsilon, \sigma(t)}>0$, the continuity of $\sigma(s)$ implies that that there exists a neighborhood $\tilde{I}^{t}$ of $t$ , such that $\forall s\in 2\tilde{I}^{t}$, and for any finite collection of balls $\mathcal{B}\subset K$, where $K$ is a fixed compact subset of $\mathbb{H}$,
\begin{equation}\label{W1,2 distance of sigma(t) and sigma(s) on balls}
\frac{1}{2}\int_{\mathcal{B}}|\nabla\sigma(s)-\nabla\sigma(t)|^{2}\leq min\big\{\frac{1}{4}e_{\epsilon, \sigma(t)}, \frac{1}{2}\epsilon\big\},
\end{equation}
where we view $\sigma(s)$ as being lifted up to $\mathbb{H}$.

Fix $s\in 2\tilde{I}^{t}$. By Definition \ref{e_epsilon, sigma}, we can pick a finite collection of balls $\mathcal{B}\subset\Sigma_{\tau(s)}$, such that $E(\sigma(s), \mathcal{B})\leq\frac{1}{2}\epsilon$ and $E(\sigma(s))-E[H(\sigma(s), \frac{1}{4}\mathcal{B})]\geq\frac{3}{4}e_{\frac{1}{2}\epsilon, \sigma(s)}$. By taking the compact set $K\subset\mathbb{H}$ large enough, we can always find a connected pre-image in $K$ for each ball in $\mathcal{B}$. Denote those connected pre-image balls by $\mathcal{B}$ again. Then take the image of $\mathcal{B}$ under $\pi_{\tau(t)}: \mathbb{H}\rightarrow\Sigma_{\tau(t)}$, we get another collection of geodesic balls on $\Sigma_{\tau(t)}$, which we still denote by $\mathcal{B}$. So $E(\sigma(t), \mathcal{B})\leq E(\sigma(s), \mathcal{B})+\frac{1}{2}\epsilon\leq\epsilon$ by (\ref{W1,2 distance of sigma(t) and sigma(s) on balls}), hence $E(\sigma(t))-E[H(\sigma(t), \frac{1}{4}\mathcal{B})]\leq e_{\epsilon, \sigma(t)}$ by Definition \ref{e_epsilon, sigma}. So
\begin{equation}
\begin{split}
E\big(\sigma(s)\big) &-E\big[H(\sigma(s), \frac{1}{4}\mathcal{B})\big]\\
                     &\leq |E\big(\sigma(s)\big)-E\big(\sigma(t)\big)|+E\big(\sigma(t)\big)-E\big[H(\sigma(t), \frac{1}{4}\mathcal{B})\big]\\
                     &+|E\big[H(\sigma(t), \frac{1}{4}\mathcal{B})\big]-E\big[H(\sigma(s), \frac{1}{4}\mathcal{B})\big]|.
\end{split}
\end{equation}
Using the continuity of harmonic replacement, i.e. Corollary \ref{continuity of harmonic replacement}, we can possibly shrink the neighborhood $\tilde{I}^{t}$ to a smaller one $I^{t}$, such that $|E\big(\sigma(s)\big)-E\big(\sigma(t)\big)|\leq\frac{1}{4}e_{\epsilon, \sigma(t)}$ and $|E\big[H(\sigma(t), \frac{1}{4}\mathcal{B})\big]-E\big[H(\sigma(s), \frac{1}{4}\mathcal{B})\big]|\leq\frac{1}{4}e_{\epsilon, \sigma(t)}$. Hence $E\big(\sigma(s)\big)-E\big[H(\sigma(s), \frac{1}{4}\mathcal{B})\big]\leq\frac{3}{2}e_{\epsilon, \sigma(t)}$, so $e_{\frac{1}{2}\epsilon, \sigma(s)}\leq 2e_{\epsilon, \sigma(t)}$.
\end{proof}

Next, we will choose families of collections of disjoint geodesic balls corresponding to sweep-outs $\big(\gamma(t), \tau(t)\big)\in\tilde{\Omega}$.

\begin{lemma}\label{selection of balls}
There exist a covering $\{I^{t_{j}}:\ j=1,\cdots,m\}$ for the parameter space $[0, 1]$, and $m$ collections of disjoint geodesic balls $\mathcal{B}_{j}\subset\Sigma_{\tau(t_{j})},\ j=1,\cdots,m$, with the radius of each ball less than the injective radius of the center of that ball on $\Si_{\tau(t_j)}$, and $r_0$ (\ref{small radius}),
together with $m$ continuous functions $r_{j}:[0,1]\rightarrow[0,1]$, $j=1,\cdots,m$, satisfying:

$1^{\circ}$. Each $r_{j}(t)$ is supported in $2I^{t_{j}}$;

$2^{\circ}$. For a fixed $t$, at most two $r_{j}(t)$ are positive, and $E\big(\gamma(t), r_{j}(t)\mathcal{B}_{j}\big)\leq\frac{1}{3}\epsilon_{1}$;

$3^{\circ}$. If $t\in[0,1]$, such that $E\big(\gamma(t), \tau(t)\big)\geq\frac{1}{2}\mathcal{W}$, there exists a $j$, such that $E\big(\gamma(t)\big)-E[H(\gamma(t), \frac{1}{4}r_{j}(t)\mathcal{B}_{j})]\geq\frac{1}{8}e_{\frac{1}{8}\epsilon_{1}, \gamma(t)}$.
\end{lemma}
The proof uses the continuity of $\big(\gamma(t), \tau(t)\big)$ and $e_{\epsilon, \gamma(t)}$ together with a covering argument for the parameter space $[0, 1]$. It is similar to that of \cite[Lemma 3.39]{CM3} and \cite[Lemma 4.5]{Z}, so we omit the proof.

\vspace{1em}
\begin{proof}
(of Lemma \ref{compactification}) The perturbation from $\gamma(t)$ to $\rho(t)$ is done by successive harmonic replacements on the collection of balls given in Lemma \ref{selection of balls}. Denote $\gamma^{0}(t)=\gamma(t)$, and $\gamma^{k}(t)=H\big(\gamma^{k-1}(t), r_{k}(t)\mathcal{B}_{k}\big)$, for $k=1, \cdots, m$. Then $\rho(t)=\gamma^{m}(t)$. Here we can shrink the length of each interval $I^{t_{j}}$, such that the harmonic replacements from $\gamma(t)$ to $\rho(t)$ keep the continuity of $\rho^k(t)$ as discussed in the beginning of this section (\S \ref{construction of the deformation map}). The homotopy equivalence of $\rho(t)$ and $\gamma(t)$ is also a consequence of the discussions there (\S \ref{construction of the deformation map}). Since harmonic replacements decrease energy, we have $E\big(\rho(t)\big)\leq E\big(\gamma(t)\big)$.

Now the property $(*)$ comes from similar argument as in the proof of \cite[ Lemma 4.1]{Z} which originate from the proof of \cite[Theorem 3.1]{CM3}. For $t\in (0, 1)$ such that $E\big(\gamma(t), \tau(t)\big)\geq\frac{1}{2}\mathcal{W}$, we deform $\gamma(t)$ to $\rho(t)$ by at most two harmonic replacements, with the possible middle one denoted by $\gamma^{k}(t)$. Now we focus on the case of two replacements, and the other case is similar and much easier. For any collection $\mathcal{B}$ with $E(\rho(t), \mathcal{B})\leq\frac{1}{12}\epsilon_{1}$, we can assume that both $\gamma(t)$ and $\gamma^{k}(t)$ have energy less than $\frac{1}{8}\epsilon_{1}$ on $\mathcal{B}$, or inequality (\ref{compactification formula}) is trivial. By property $3^{\circ}$ of Lemma \ref{selection of balls}, at least one of the energy decrease from $\gamma(t)$ to $\rho(t)$ is bounded from below by $\frac{1}{8}e_{\frac{1}{8}\epsilon_{1}, \gamma(t)}$. so we have from either inequality (\ref{comparison inequality1}) of Lemma \ref{comparison} or inequality (\ref{comparison inequality3}) that:
\begin{equation}
E\big(\gamma(t)\big)-E\big(\rho(t)\big)\geq k\big(\frac{1}{8}e_{\frac{1}{8}\epsilon_{1}, \gamma(t)}\big)^{2}.
\end{equation}

Now using inequality (\ref{comparison inequality2}) twice for $\mu=\frac{1}{64},\ \frac{1}{16}$, we get:
\begin{equation}\label{energy gap estimate}
\begin{split}
& E\big(\rho(t)\big)-E[H(\rho(t), \frac{1}{64}\mathcal{B})]\\
&\leq E\big(\gamma^{k}(t)\big)-E[H(\gamma^{k}(t), \frac{1}{16}\mathcal{B})]+\frac{1}{k}\big\{E\big(\gamma^{k}(t)\big)-E\big(\rho(t)\big)\big\}^{\frac{1}{2}}\\
&\leq E\big(\gamma(t)\big)-E[H(\gamma(t), \frac{1}{4}\mathcal{B})]+\frac{1}{k}\big\{E\big(\gamma(t)\big)-E\big(\gamma^{k}(t)\big)\big\}^{\frac{1}{2}}\\
&+\frac{1}{k}\big\{E\big(\gamma(t)\big)-E\big(\rho(t)\big)\big\}^{\frac{1}{2}}\\
&\leq e_{\frac{1}{8}\epsilon_{1}, \gamma(t)}+C\big\{E\big(\gamma(t)\big)-E\big(\rho(t)\big)\big\}^{\frac{1}{2}}\leq C\big\{E\big(\gamma(t)\big)-E\big(\rho(t)\big)\big\}^{\frac{1}{2}}.
\end{split}
\end{equation}
By taking $\epsilon_{0}=\frac{1}{12}\epsilon_{1}$ and $\Psi$ the square root function, we can get inequality (\ref{compactification formula}) by using Theorem \ref{energy gap} to change the left hand side of (\ref{energy gap estimate}) to the $W^{1, 2}$-norm difference.
\end{proof}


\section{Convergence results}\label{Convergence results}

Here we talk about the convergence about our deformed sequences $\{\rho_{n}(t), \tau_{n}(t)\}_{n=1}^{\infty}$. In Lemma \ref{compactification}, we need our sequence $\{\gamma_{n}(t), \tau_{n}(t)\}_{n=1}^{\infty}$ to have no non-constant harmonic slices. We can achieve this by an argument similar to \cite[Remark 4.6]{Z}. In fact, we can modify the minimizing sequence $\{\tilde{\gamma}_{n}(t)\}_{n=1}^{\infty}$ such that $\tilde{\gamma}_{n}(t)$ are constant mappings on a small open set on $\Sigma_{0}$, without changing the area too much. By Theorem \ref{conformal parametrization}, $\gamma_{n}(t)$ are gotten from $\tilde{\gamma}_{n}(t)$ by composing with diffeomorphisms $h_{n}(t)$, so $\gamma_{n}(t)$ are also constant mappings on some small open set. By the unique continuation of harmonic maps \cite[Corollary 2.6.1]{J1}, we know that for any parameter $t$, $\gamma_{n}(t)$ could not be harmonic mapping unless it is a constant mapping. So we can apply Lemma \ref{compactification}.

We would also like to preserve the almost conformal property given in Theorem \ref{conformal parametrization} after the deformation given by Lemma \ref{compactification}. Although we could not make sure that $\rho_{n}(t)$ are still almost conformal for every parameter $t$ after the deformation, we can prove similar results for the parameter $t$ with $E\big(\rho_{n}(t_{n}), \tau_{n}(t_{n})\big)$ closed to the min-max critical value $\mathcal{W}$. The proof is almost the same as \cite[Lemma 5.1]{Z}, so we omit the proof here. The result is as following.

\begin{lemma}\label{almost conformal for perturbed sequence}
Given a sequence of parameters $\{t_{n}\}_{n=1}^{\infty}$, such that $E\big(\rho_{n}(t_{n}), \tau_{n}(t_{n})\big)\rightarrow\mathcal{W}$ as $n\rightarrow\infty$, then
\begin{equation}
E\big(\rho_{n}(t_{n}), \tau_{n}(t_{n})\big)-Area\big(\rho_{n}(t_{n})\big)\rightarrow 0,\quad \textrm{as }n\rightarrow\infty.
\end{equation}
\end{lemma}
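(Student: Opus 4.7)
The plan is to propagate the almost-conformality of $\gamma_n$ from Theorem \ref{conformal parametrization} to $\rho_n$ at the selected parameters $t_n$, exploiting that at $t_n$ the harmonic replacement $\gamma_n\mapsto\rho_n$ perturbs the map only slightly in $W^{1,2}$.

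First, I would establish a sandwich for the energy. Because $\rho_n\in[\gamma_n]=[\beta]$ and $(\rho_n,\tau_n)\in[\beta,\tau_0]$, the identity $\mathcal{W}=\mathcal{W}_E$ (Remark \ref{energy equivalent to area}) gives $\max_t E(\rho_n(t),\tau_n(t))\geq\mathcal{W}$, while the pointwise inequality $E(\rho_n(t),\tau_n(t))\leq E(\gamma_n(t),\tau_n(t))$ combined with Theorem \ref{conformal parametrization} forces $\max_t E(\rho_n(t),\tau_n(t))\to\mathcal{W}$. Inserting the hypothesis $E(\rho_n(t_n),\tau_n(t_n))\to\mathcal{W}$ into the chain
\begin{equation*}
E(\rho_n(t_n),\tau_n(t_n))\;\leq\;E(\gamma_n(t_n),\tau_n(t_n))\;\leq\;\max_t E(\gamma_n(t),\tau_n(t))\;\longrightarrow\;\mathcal{W}
\end{equation*}
pinches $E(\gamma_n(t_n),\tau_n(t_n))\to\mathcal{W}$ and in particular $E(\gamma_n(t_n),\tau_n(t_n))-E(\rho_n(t_n),\tau_n(t_n))\to 0$.

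Second, by the construction in Lemma \ref{compactification} (via Lemma \ref{sellecttion of balls}), $\rho_n(t_n)$ is obtained from $\gamma_n(t_n)$ by at most two successive harmonic replacements on finite collections of disjoint small geodesic balls, each of energy at most $\tfrac{1}{3}\epsilon_1$. Summing the energy-gap inequality (Theorem \ref{energy gap} with Remark \ref{energy gap remark 1}) over these balls, and using the triangle inequality across the two replacements (noting that outside the balls the maps coincide), yields
\begin{equation*}
\int_{\Sigma_{\tau_n(t_n)}}|\nabla\gamma_n(t_n)-\nabla\rho_n(t_n)|^2\;\leq\;C\bigl(E(\gamma_n(t_n),\tau_n(t_n))-E(\rho_n(t_n),\tau_n(t_n))\bigr)\;\longrightarrow\;0.
\end{equation*}

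Third, I would invoke a standard Lipschitz bound for the area functional in $W^{1,2}$. Writing the area density in local coordinates as $\sqrt{\det(u^\ast h)}=|\partial_x u\wedge\partial_y u|_h$ and using $|a\wedge b-c\wedge d|\leq|a-c|\,|b|+|c|\,|b-d|$ followed by Cauchy--Schwarz gives
\begin{equation*}
|\mathrm{Area}(u)-\mathrm{Area}(v)|\;\leq\;C\,\|\nabla u-\nabla v\|_{L^2}\bigl(\|\nabla u\|_{L^2}+\|\nabla v\|_{L^2}\bigr).
\end{equation*}
Applied to $u=\gamma_n(t_n),\,v=\rho_n(t_n)$, with the uniform energy bound $\|\nabla u\|_{L^2}^2\leq 2\mathcal{W}+o(1)$ and the Step 2 estimate, this yields $\mathrm{Area}(\gamma_n(t_n))-\mathrm{Area}(\rho_n(t_n))\to 0$.

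Finally, the telescoping identity
\begin{equation*}
E(\rho_n(t_n),\tau_n(t_n))-\mathrm{Area}(\rho_n(t_n))=\bigl[E(\rho_n)-E(\gamma_n)\bigr]+\bigl[E(\gamma_n)-\mathrm{Area}(\gamma_n)\bigr]+\bigl[\mathrm{Area}(\gamma_n)-\mathrm{Area}(\rho_n)\bigr]
\end{equation*}
(every term evaluated at $t_n$, with conformal structure $\tau_n(t_n)$ where applicable) has its three brackets tending to zero by Step 1, Theorem \ref{conformal parametrization}, and Step 3 respectively, which completes the proof. The main delicacy I anticipate is Step 1: the hypothesis only controls $E(\rho_n(t_n),\tau_n(t_n))$, whereas the rest of the argument needs simultaneous control of $E(\gamma_n(t_n),\tau_n(t_n))$; the combination of $\mathcal{W}_E=\mathcal{W}$ with homotopy preservation under harmonic replacement is what makes this possible.
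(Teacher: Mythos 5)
Your proof is correct and takes essentially the approach the paper intends: the paper omits this proof, deferring to Lemma 5.1 of \cite{Z}, which argues exactly as you do — squeeze $E\big(\gamma_{n}(t_{n}),\tau_{n}(t_{n})\big)\rightarrow\mathcal{W}$ from Theorem \ref{conformal parametrization} and the energy-decreasing property of the replacement, convert the resulting energy drop of the (at most two) harmonic replacements into $W^{1,2}$ closeness via the energy gap (Theorem \ref{energy gap}), and conclude by the $W^{1,2}$-continuity of the area functional on energy-bounded sets. Your identification of Step 1 as the delicate point is also apt, and your resolution of it is the intended one.
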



\subsection{Degeneration of conformal structures}\label{degeneration of conformal structures}

Let us talk about the \textbf{compactification of moduli space} $\mathcal{M}_{g}$. Here we mainly refer to \cite[Appendix B]{IT} and \cite[Chapter IV]{H}\footnote{\cite[\S 4]{Zh} also gives a nice summation in hyperbolic structures.}. In fact, we will use hyperbolic metrics to represent elements in $\mathcal{M}_{g}$ and its compactification. First, let us introduce the representation of the moduli space $\mathcal{M}_{g}$ and Teichm\"uller space $\mathcal{T}_{g}$ by hyperbolic and complex structures. Fix a topological surface $\Sigma_{0}$ of genus $g\geq 2$. Every metric on $\Sigma_{0}$ determines a compatible complex structure $j$ \cite[\S 1.5.1]{IT}. There exists a hyperbolic metric $h$ compatible with $j$. In fact, by the Uniformization Theorem the covering projection $\pi:\mathbb{H}\rightarrow(\Sigma_{0}, j)$ is holomorphic, and the deck transformation group acts isomorphically w.r.t. the hyperbolic metric $ds^{2}_{-1}$. So we can get a hyperbolic metric $h$ on $\Sigma_{0}$ by pushing down $ds^{2}_{-1}$, and this metric is compatible with $j$ since $ds^{2}_{-1}$ is compatible with the standard complex structure on $\mathbb{H}$. Denote such a hyperbolic Riemann surface by a triple $(\Sigma_{0}, h, j)$. Two hyperbolic metrics on $\Sigma_{0}$ are conformal equivalent if and only if they are isomorphic to each other. So we can view $\mathcal{M}_{g}$ as the set of equivalent classes of $(\Sigma_{0}, h, j)$ up to isomorphisms, and $\mathcal{T}_{g}$ as the set of equivalent classes of $(\Sigma_{0}, h, j)$ up to isotrpic isomorphisms.

Now we will introduce the concept of \textbf{Riemann surfaces with nodes}. The precise definition is given in \cite[Appendix B.2]{IT}. A compact connected Hausdorff space $\Sigma^{*}$ is called a \emph{closed Riemann surface of genus $g$ with nodes} if the following conditions hold:
\begin{enumerate}
\vspace{-5pt}
\addtolength{\itemsep}{-0.7em}
\item[(\rom{1})] Every point $p\in\Sigma^{*}$ either has a neighborhood homeomorphic to the unit disk $\{z\in\mathbb{C}: |z|<1\}$ or to the set of one point gluing of two unit disks $\{z_{1}\in\mathbb{C}: |z_{1}|\leq 1\}\cup_{0\rightarrow 0}\{z_{2}\in\mathbb{C}: |z_{2}|\leq 1\}$, 
and in the second case we call $p$ a \emph{node}. These complex coordinates give a complex structure $j$ on $\Sigma^{*}$ minus nodes. Since $\Sigma^{*}$ is compact, there are only finitely many nodes;

\item[(\rom{2})] Let $\Sigma$ be $\Sigma^{*}$ minus nodes, and $\overline{\Sigma}$ the one point compactification of $\Sigma$\footnote{Later on, we will always denote $\Sigma^{*}$ by surface with nodes, $\Sigma$ by surface minus nodes, and $\overline{\Sigma}$ by the one points compactification of $\Sigma$.}. We call $\Sigma$ the \emph{body of $\Sigma^{*}$}. Every connected component $\Sigma_{i}$ of $\Sigma$, which we call it a \emph{part of $\Sigma^{*}$}, is of type $(g_{i}, k_{i})$, which means that $\Sigma_{i}$ is gotten by removing $k_{i}$ distinct points from a Riemann surface of genus $g_{i}$, and we require that $2g_{i}-2+k_{i}>0$. The second condition makes sure that $\Sigma_{i}$ is not homotopic to complex plane or cylinder, which means that $\Sigma_{i}$ has the universal cover $\mathbb{H}$. We call such a part $\Sigma_{i}$ having \emph{signature $(g_{i}, k_{i})$};

\item[(\rom{3})] If $m$ and $k$ denote the numbers of nodes and parts of $\Sigma^{*}$, then the genus $g$ is given by $g=\Sigma_{i=1}^{k}g_{i}+m+1-k$. The last condition tells us that we can get a Riemann surface $\Sigma_{0}$ of genus $g$ from $\Sigma^{*}$ by opening each node.
\end{enumerate}

Two Riemann surfaces with nodes $\Sigma^{*}_{1}$ and $\Sigma^{*}_{2}$ of genus $g$ are said to be \emph{biholomorphically equivalent} if there exists a homeomorphism $f:\Sigma^{*}_{1}\rightarrow\Sigma^{*}_{2}$ preserving nodes, such that $f$ is biholomorphic between parts $(\Sigma_{1})_{i}$ and $(\Sigma_{2})_{i}$ of $\Sigma^{*}_{1}$ and $\Sigma^{*}_{2}$ respectively. If we add the equivalent classes $[\Sigma^{*}]$ of Riemann surfaces with nodes of genus $g$ to the moduli space $\mathcal{M}_{g}$, we get a compactification $\hat{\mathcal{M}}_{g}$ of $\mathcal{M}_{g}$\footnote{We refer to \cite[Appendix B.2 and B.3]{IT} for topology on $\hat{\mathcal{M}}_{g}$ and \cite[Theorem B.1]{IT} for compactness.}.

In fact, we are interested in the convergence $[\Sigma_{n}]\rightarrow[\Sigma^{*}_{\infty}]$ of a sequence of elements in $\mathcal{M}_{g}$ to the boundary of $\hat{\mathcal{M}}_{g}$. We will describe the convergence by representing all the equivalent classes by hyperbolic structures. Now let us first talk about the hyperbolic representation of Riemann surfaces with nodes. Given a Riemann surface with nodes $\Sigma^{*}$ , let $j$ be the complex structure on the body $\Sigma$ of $\Sigma^{*}$. On each part $\Sigma_{i}$, there exists a complete hyperbolic metric $h$ compatible with $j$, with the nodes becoming cusps. So we use $(\Sigma^{*}, h, j)$ to denote a \emph{hyperbolic Riemann surface with nodes}. A triple-connected Riemann surfaces with possibly degenerated boundaries is call a \emph{pair of pants}. Fix a hyperbolic Riemann surface with nodes $(\Sigma^{*}, h, j)$, there exists the pair of pants decomposition\footnote{See \cite[\S 3]{IT} and \cite[Chap IV]{H} for detailed discussion of definitions and properties.}. It means that we can find a largest possible collection of pairwise disjoint, simply closed geodesics $\mathcal{L}=\{\gamma^{i}: i=1\cdots 3g-3\}$ under the hyperbolic metric $h$, with $\gamma^{i}$ possibly degenerating to nodes, such that each connected component of $\Sigma^{*}\setminus\mathcal{L}$ is a pair of pants. Now we give a concept for convergence of a sequence of closed hyperbolic Riemann surfaces of genus $g$ to a hyperbolic Riemann surface with nodes\footnote{For general convergence of a sequence of Riemann surfaces with nodes to a fixed Riemann surface with nodes, see \cite[Page 71]{H}.}.

\begin{definition}\label{convergence of Riemann surfaces}
A sequence $\{(\Sigma_{n}, h_{n}, j_{n})\}$ of closed hyperbolic Riemann surfaces of genus $g$ is said to converge to a hyperbolic Riemann surface with nodes $(\Sigma^{*}_{\infty}, h_{\infty}, j_{\infty})$, if there exists a sequence of finite sets $\mathcal{L}_{n}=\{\gamma_{n}^{i}\}_{i=1}^{k_{n}}\subset\Sigma_{n}$ consisting of pairwise disjoint simply closed geodesics on $(\Sigma_{n}, h_{n})$, with the number of elements $k_{n}$ bounded by $0\leq k_n\leq 3g-3$, and a sequence of continuous mappings $\phi_{n}:\Sigma_{n}\rightarrow\Sigma^{*}_{\infty}$, satisfying the following conditions as $n\rightarrow\infty$:

\vspace{-11pt}
\begin{itemize}
\addtolength{\itemsep}{-0.7em}
\item[$1^{\circ}:$] $\phi_{n}(\gamma_{n}^{i})=p_{i}$, where $p_{i}$ is a node on $\Sigma^{*}_{\infty}$, and the length $l(\gamma_{n}^{i})\rightarrow 0$.
\item[$2^{\circ}:$] $\phi_{n}:\Sigma_{n}\setminus\mathcal{L}_{n}\rightarrow\Sigma_{\infty}$ is a diffeomorphism, where $\Sigma_{\infty}$ is the body of $\Sigma^{*}_{\infty}$.
\item[$3^{\circ}:$] $(\phi_{n})_{*}h_{n}\rightarrow h_{\infty}$ in $C^{\infty}_{loc}(\Sigma_{\infty})$.
\item[$4^{\circ}:$] $(\phi_{n})_{*}j_{n}\rightarrow j_{\infty}$ in $C^{\infty}_{loc}(\Sigma_{\infty})$.
\end{itemize}
\end{definition}

Now using the hyperbolic description of convergence, we can summarize a version of the compactification $\hat{\mathcal{M}}_{g}$ of $\mathcal{M}_{g}$. We refer to \cite[Chap 4, Proposition 5.1]{H} for a proof.

\begin{proposition}\label{compactness of hyperbolic surfaces}
For any sequence $\{(\Sigma_{n}, h_{n}, j_{n})\}_{n=1}^{\infty}$, where each element $(\Sigma_{n}, h_{n}, j_{n})$ represents an equivalent class in $\mathcal{M}_{g}$, there exists a subsequence $\{(\Sigma_{n^{\prime}}, h_{n^{\prime}}, j_{n^{\prime}})\}$ converging to a hyperbolic Riemann surface with nodes $(\Sigma^{*}_{\infty}, h_{\infty}, j_{\infty})$, which represents an equivalent class in $\hat{\mathcal{M}}_{g}$.
\end{proposition}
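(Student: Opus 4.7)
The plan is to prove Proposition 4.1 by the classical Bers--Mumford compactness argument, using pants decompositions and the thick/thin decomposition of hyperbolic surfaces. First, for each $(\Sigma_n, h_n, j_n)$ I would invoke Bers' theorem to select a pants decomposition $\mathcal{L}_n = \{\gamma_n^1, \ldots, \gamma_n^{3g-3}\}$ by pairwise disjoint simple closed $h_n$-geodesics whose lengths satisfy $\ell(\gamma_n^i) \leq L_g$, where the Bers constant $L_g$ depends only on the genus. Associated to this decomposition are the Fenchel--Nielsen coordinates $(\ell_n^i, \theta_n^i)_{i=1}^{3g-3}$. Since we work in moduli space $\mathcal{M}_g$ rather than Teichm\"uller space, we are free to act by the mapping class group; using powers of Dehn twists along each $\gamma_n^i$, I would normalize the twist so that $\theta_n^i \in [0, \ell_n^i) \subset [0, L_g)$. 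Thus every Fenchel--Nielsen coordinate now lies in a bounded interval of $[0, L_g]$.

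After passing to a subsequence, $\ell_n^i \to \ell_\infty^i \in [0, L_g]$ for every $i$, and $\theta_n^i \to \theta_\infty^i$ whenever $\ell_\infty^i > 0$. Let $S = \{i : \ell_\infty^i = 0\}$ be the indices of degenerating curves. The candidate limit $\Sigma^*_\infty$ is constructed as follows: take the same collection of pairs of pants, glue boundaries indexed by $i \notin S$ using the length parameter $\ell_\infty^i$ and twist $\theta_\infty^i$, and for $i \in S$ replace each pair of cuffs of length $\ell_n^i \to 0$ by two cusps identified at a single point, producing a node $p_i$. By the collar lemma, the collar around $\gamma_n^i$ for $i \in S$ has modulus tending to infinity, so the hyperbolic geometry outside shrinking neighborhoods of $\gamma_n^i$ converges smoothly on compact sets to complete finite-area hyperbolic metrics on pants with cusps. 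Each resulting component (a part $\Sigma_i$ of $\Sigma^*_\infty$) is built from pairs of pants, and the signature condition $2g_i - 2 + k_i > 0$ holds since the part supports a complete hyperbolic metric and hence has $\mathbb{H}$ as universal cover. The genus count $g = \sum g_i + m + 1 - k$ follows from Euler characteristic bookkeeping, so $\Sigma^*_\infty$ is a genuine Riemann surface with nodes of genus $g$.

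To build the maps $\phi_n : \Sigma_n \to \Sigma^*_\infty$, I would first use the explicit Fenchel--Nielsen model: outside fixed small collar neighborhoods $U_n^i$ of $\gamma_n^i$ for $i \in S$, the Fenchel--Nielsen coordinates yield a canonical diffeomorphism from $\Sigma_n \setminus \bigcup_{i \in S} U_n^i$ onto a subset of $\Sigma_\infty$ exhausting $\Sigma_\infty$ as $n \to \infty$. Inside each $U_n^i$, collapse the core $\gamma_n^i$ to the node $p_i$ and extend to an $h_n$-to-$h_\infty$ asymptotic diffeomorphism on the two sides, using that the collar is conformally a long thin annulus whose two halves converge to the two cusp neighborhoods at $p_i$. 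Conditions $1^\circ$ and $2^\circ$ of Definition 4.1 are immediate from this construction; conditions $3^\circ$ and $4^\circ$ follow because, on any compact $K \Subset \Sigma_\infty$, the map $\phi_n|_{\phi_n^{-1}(K)}$ eventually agrees with the Fenchel--Nielsen identification, and the hyperbolic metric together with its induced complex structure depend smoothly on $(\ell, \theta)$ on pants with non-degenerate boundary (via the explicit hyperbolic trigonometric formulas and the fact that $j$ is determined by $h$).

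The main obstacle I anticipate is the construction and verification near the degenerating geodesics: showing that the collar regions, whose hyperbolic geometry is stretching without bound, can be mapped to the cusp neighborhoods of $\Sigma^*_\infty$ in a way that yields genuine $C^\infty_{loc}$ convergence of the pushforward metric and complex structure on the body $\Sigma_\infty$ (not merely on a slightly smaller exhaustion). This requires a careful use of the standard collar model $\{(s, t) : |s| \leq s_n,\ t \in \mathbb{R}/\ell_n^i\mathbb{Z}\}$ with metric $ds^2 + (\ell_n^i \cosh s)^2 dt^2/(2\pi)^2$ and its conformal identification with a cusp as $\ell_n^i \to 0$, which is the content of the plumbing construction. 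The convergence of $\ell_n^i$, $\theta_n^i$ and of the pants data then upgrades to the desired smooth convergence on each compact subset of $\Sigma_\infty$.
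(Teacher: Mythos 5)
The paper offers no proof of this proposition at all: it simply defers to Proposition 5.1 of Chapter IV of Hummel's book, whose argument runs through the thick--thin decomposition and the collar lemma (the paper's Lemma 4.2 is quoted precisely to make that geometry available later). Your plan is the Fenchel--Nielsen/Bers version of the same Deligne--Mumford compactness statement, and it is correct in outline: Bers' bound $\ell(\gamma_n^i)\leq L_g$, twist normalization by Dehn twists (legitimate since the statement is in $\mathcal{M}_g$, not $\mathcal{T}_g$), subsequential convergence of the coordinates, and the plumbing/collar analysis near the curves with $\ell_\infty^i=0$, which you rightly single out as the only genuinely delicate step. The two routes buy slightly different things: Hummel's thick--thin approach avoids choosing coordinates and generalizes to surfaces with boundary and to the pseudo-holomorphic setting he needs, while your Fenchel--Nielsen approach gives an explicit model of the limit and of the maps $\phi_n$, which is arguably better suited to verifying conditions $3^{\circ}$ and $4^{\circ}$ of Definition 4.1 concretely. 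One small point you leave implicit and should state: the combinatorial pattern of the Bers pants decomposition (which cuffs are glued to which, and by which pants) can a priori vary with $n$, so before passing to limits of the coordinates you must first pass to a subsequence along which this pattern is constant; this is harmless because there are only finitely many such patterns for a fixed genus up to the mapping class group action, which you are already quotienting by.
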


Besides the convergence results, we also have a detailed description of the geometry near the degenerating geodesics. We refer to \cite[Chap 4, Proposition 4.2]{H} and \cite[Lemma 4.2]{Zh} for the following collar lemma.

\begin{lemma}\label{collar lemma}
For any simply closed geodesic $\gamma$ with length $l(\gamma)=l$ in a hyperbolic surface $(\Sigma, h)$, there exists a collar neighborhood of $\gamma$, which is isomorphic to the following collar region in the hyperbolic plane $\mathbb{H}$:
\begin{equation}\label{collar region}
\mathcal{C}(\gamma)=\big\{z=re^{i\theta}\in\mathbb{H}:\ 1\leq r\leq e^{l},\ \theta_{0}(l)\leq\theta\leq\pi-\theta_{0}(l)\big\},
\end{equation}
with the circles $\{r=1\}$ and $\{r=e^{l}\}$ identified by the isometry $\Gamma_{l}:z\rightarrow e^{l}z$. Here $\theta_{0}(l)=\tan^{-1}\big(\sinh(\frac{l}{2})\big)$, and $\gamma$ is isometric to $\{z=re^{\frac{\pi}{2}i}\in i\mathbb{R}:\ 1\leq r\leq e^{l}\}$.
\end{lemma}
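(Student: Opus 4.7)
The plan is to work in the universal cover $\mathbb{H}$ and reduce the statement to a geometric estimate on the distance between distinct lifts of $\gamma$. Since $(\Sigma, h)$ is a hyperbolic surface, I write $\Sigma = \mathbb{H}/\Gamma$ for a Fuchsian group $\Gamma$ and pick a lift $\tilde\gamma \subset \mathbb{H}$ of the closed geodesic $\gamma$. The stabilizer of $\tilde\gamma$ in $\Gamma$ is infinite cyclic, generated by a hyperbolic element $T$ with axis $\tilde\gamma$ and translation length $l$. After conjugating in $PSL(2,\mathbb{R})$, I normalize so that $\tilde\gamma$ is the positive imaginary axis and $T(z) = e^{l} z$; then the fundamental segment $\{r e^{i\pi/2} : 1 \leq r \leq e^{l}\}$ with endpoints identified by $T$ is an isometric copy of $\gamma$, as required.

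Next I analyze the candidate collar $\mathcal{C}(\gamma) \subset \mathbb{H}$ defined by (\ref{collar region}). Its bounding rays $\theta = \theta_{0}(l)$ and $\theta = \pi - \theta_{0}(l)$ are Euclidean rays from the origin and are therefore hypercycles (curves of constant hyperbolic distance) to the imaginary axis. A direct upper half-plane computation shows that the hyperbolic distance from the ray $\theta = \theta_{0}$ to the imaginary axis equals $\sinh^{-1}(\cot \theta_{0})$, so with $\theta_{0}(l) = \tan^{-1}(\sinh(l/2))$ the width of the collar is $w(l) := \sinh^{-1}(1/\sinh(l/2))$. The region $\mathcal{C}(\gamma)$ is manifestly $T$-invariant, and the quotient $\mathcal{C}(\gamma)/\langle T \rangle$ is a topological cylinder with core $\gamma$, carrying the model hyperbolic metric prescribed in the statement.

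The remaining and essential step is to verify that the covering projection $\pi : \mathbb{H} \to \Sigma$ descends to an isometric embedding of $\mathcal{C}(\gamma)/\langle T \rangle$ into $\Sigma$. Equivalently, I must show $g(\mathcal{C}(\gamma)) \cap \mathcal{C}(\gamma) = \emptyset$ for every $g \in \Gamma \setminus \langle T \rangle$, which in turn reduces to the assertion that any lift of $\gamma$ distinct from $\tilde\gamma$ lies at hyperbolic distance at least $2 w(l)$ from $\tilde\gamma$. This geometric estimate is the main obstacle and is the content of the classical Buser collar lemma. The cleanest route is to apply the right-angled hexagon identity of hyperbolic trigonometry to the configuration built from $\tilde\gamma$, a neighbouring lift $g\tilde\gamma$, their common perpendicular, and a suitable $T$-translate of the second axis; this yields the key inequality
\begin{equation*}
\sinh\!\left(\tfrac{1}{2} d(\tilde\gamma,\, g\tilde\gamma)\right) \cdot \sinh\!\left(\tfrac{l}{2}\right) \;\geq\; 1,
\end{equation*}
which rearranges to $d(\tilde\gamma, g\tilde\gamma) \geq 2 w(l)$ exactly. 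Discreteness of $\Gamma$ is used here, via the fact that if the two axes came closer, then a product of $T$ with a conjugate $g T g^{-1}$ would accumulate at the identity or produce an elliptic element, both forbidden.

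Once the distance estimate is in place the proof assembles at once: $\mathcal{C}(\gamma)$ descends injectively to $\Sigma$, and the isometric identification with the model collar of (\ref{collar region}) follows directly from the normalization and the choice $\theta_{0}(l) = \tan^{-1}(\sinh(l/2))$. Since the entire argument is local around $\gamma$ and uses only the fact that the stabilizer of $\tilde\gamma$ is generated by $T$, it applies uniformly to every simple closed geodesic on every closed hyperbolic surface, in particular to the degenerating geodesics of Section \ref{degeneration of conformal structures}.
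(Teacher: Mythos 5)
Your argument is correct, but it takes a genuinely different route from the paper: the paper does not prove Lemma \ref{collar lemma} at all --- it cites Proposition 4.2 and Lemma 1.6 of Chapter IV in \cite{H} (and Lemma 4.2 of \cite{Zh}), and the remark that follows only explains how to convert Hummel's parametrization of the half-collar by boundary arc length into the polar coordinates of (\ref{collar region}). You instead reprove the underlying classical fact (the Keen--Buser collar lemma) in the universal cover: normalize the axis to the positive imaginary axis with deck transformation $z\mapsto e^{l}z$, note that the rays $\theta=\theta_{0}$ and $\theta=\pi-\theta_{0}$ are hypercycles at hyperbolic distance $\sinh^{-1}(\cot\theta_{0})$ from the axis, so that $\theta_{0}(l)=\tan^{-1}(\sinh(l/2))$ gives half-width $w(l)=\sinh^{-1}\big(1/\sinh(l/2)\big)$, and reduce embeddedness of the quotient $\mathcal{C}(\gamma)/\langle T\rangle$ to the estimate $\sinh\big(\tfrac{1}{2}d(\tilde\gamma,g\tilde\gamma)\big)\sinh\big(\tfrac{l}{2}\big)\geq 1$ for distinct lifts. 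These computations are all right, and the inequality you state is exactly the sharp collar constant, so it matches the $\theta_{0}(l)$ of the statement. What your version buys is a self-contained proof; what it leaves to the reader is precisely the hard step, which you acknowledge: the hexagon/pentagon trigonometry producing the displayed inequality is named but not carried out, and you should say explicitly where simplicity of $\gamma$ enters --- it is what forces distinct lifts $\tilde\gamma$ and $g\tilde\gamma$ to be disjoint, so that a common perpendicular exists and $d(\tilde\gamma,g\tilde\gamma)>0$; for a non-simple closed geodesic both the estimate and the lemma fail. Since those two points are standard and are exactly the content of the references the paper leans on, your proposal stands as a valid (sketched) alternative to the paper's citation.
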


\begin{remark}\label{remark of collar lemma}
In fact, this result follows from the proof of \cite[Chap 4, Lemma 1.6]{H}. They consider half of the collar, and they show that the collar region should be part of annuli $\{re^{i\theta}: \theta_{0}\leq\theta\leq\frac{\pi}{2}, 1\leq r\leq y\}$. Instead of using polar coordinates $\{r, \theta\}$, they use the length of boundary of the region $\{re^{i\theta}: \theta_{0}\leq\theta\leq\frac{\pi}{2}, r=1\}$ as parameter. It is easy to change back to polar coordinates and get our formulation above.
\end{remark}

As stated in \cite{Z}, we can give a explicit metric on the collar region by a conformal change of coordinates. Now, we can view the parameters $r$ and $\theta$ in (\ref{collar region}) as azimuthal and vertical coordinates for a cylinder respectively. Under the following transformation:
$$re^{i\theta}\rightarrow(t, \phi)=\big(\frac{2\pi}{l}\theta, \frac{2\pi}{l}\log(r)\big),$$
where $l$ is the length of the center geodesic, the collar region $\mathcal{C}(\gamma)$ is changed to a cylinder
\begin{equation}
C=\big\{(t, \phi):\ \frac{2\pi}{l}\theta_{0}\leq t\leq\frac{2\pi}{l}(\pi-\theta_{0}), 0\leq\phi\leq2\pi\big\},
\end{equation}
and the hyperbolic metric $ds_{-1}^{2}=\frac{|dz|^{2}}{(Imz)^{2}}$ is expressed as $ds_{-1}^{2}=(\frac{l}{2\pi\sin(\frac{l}{2\pi}t)})^{2}(dt^{2}+d\phi^{2})$, which is conformal to the standard cylindrical metric $ds^{2}=dt^{2}+d\phi^{2}$. We can see that if the geodesic $\gamma$ shrink to a point, a conformally infinitely long cylinder will appear.


\subsection{Convergence}

Before talking about bubble tree convergence of the sequence $\big\{\rho_{n}(t), \tau_{n}(t)\big\}_{n=1}^{\infty}$ gotten by the previous section, let us first clarify the concepts of convergence for a sequence $\{\tau_{n}\}_{n=1}^{\infty}\subset\mathcal{T}_{g}$. Since the area and energy functionals are both conformally invariant, we can choose good representatives in the conformal classes of $\{\tau_{n}\}_{n=1}^{\infty}$. Or in another word, we world like to project $\mathcal{T}_{g}$ to $\mathcal{M}_{g}$, and use the compactification $\hat{\mathcal{M}}_{g}$ of $\mathcal{M}_{g}$ to discuss the convergence of $\{\tau_{n}\}_{n=1}^{\infty}$. Here we use hyperbolic representatives as talked above. We say \emph{$\{\tau_{n}\}_{n=1}^{\infty}$ converge to $\tau_{\infty}$ in $\hat{\mathcal{M}}_{g}$}, if we can find hyperbolic representatives $(\Sigma_{n}, h_{n}, j_{n})\in\tau_{n}$ and $(\Sigma^{*}_{\infty}, h_{\infty}, j_{\infty})\in\tau_{\infty}$, such that $(\Sigma_{n}, h_{n}, j_{n})$ converge to $(\Sigma^{*}_{\infty}, h_{\infty}, j_{\infty})$ in the sense of Definition \ref{convergence of Riemann surfaces}. In another word, if we denote $[\tau]$ to be the projection of $\tau$ to $\mathcal{M}_{g}$, \emph{the convergence of $\{\tau_{n}\}$ to $\tau_{\infty}$ means that $[\tau_{n}]$ converge to $[\tau_{\infty}]$ in $\hat{\mathcal{M}}_{g}$.} Now we can state the following theorem.

\begin{theorem}\label{convergence}
(Theorem \ref{main theorem 2}) Let $\big\{(\rho_{n}(t), \tau_{n}(t))\big\}_{n=1}^{\infty}$ be the sequence gotten by the perturbation from $\big\{(\gamma_{n}(t), \tau_{n}(t))\big\}_{n=1}^{\infty}$ by Lemma \ref{compactification}\footnote{See the discussion in the beginning of \S \ref{Convergence results} on how to achieve the no non-constant harmonic slice condition.}, then all min-max sequences $\{(\rho_{n}(t_{n}), \tau_{n}(t_{n}))\}_{n=1}^{\infty}$ with $E\big(\rho_{n}(t_{n}), \tau_{n}(t_{n})\big)\rightarrow\mathcal{W}_{E}$, satisfy:
\begin{itemize}
\vspace{-5pt}
\addtolength{\itemsep}{-0.7em}

\item[(*)] For any finite collection of disjoint geodesic balls $\underset{i}{\cup}B_{i}$ on $\Sigma_{\tau_{n}(t_{n})}$ with radii bounded as in Lemma \ref{compactification}, such that $E\big(\rho_{n}(t_{n}), \underset{i}{\cup}B_{i}\big)\leq\epsilon_{0}$, let $v$ be the harmonic replacement of $\rho_{n}(t_{n})$ on $\frac{1}{64}\underset{i}{\cup}B_{i}$, then we have:
\begin{equation}
\int_{\frac{1}{64}\underset{i}{\cup}\mathcal{B}_{i}}|\nabla\rho_{n}(t_{n})-\nabla v|^{2}\rightarrow 0
\end{equation}
\end{itemize}
By Proposition \ref{compactness of hyperbolic surfaces}, a subsequence of $\{\tau_{n}(t_{n})\}_{n=1}^{\infty}$ converge to some $\tau_{\infty}$ in $\hat{\mathcal{M}}_{g}$, which is achieved by the convergence of a sequence of hyperbolic Riemann surfaces $(\Sigma_{n}, h_{n}, j_{n})\in\tau_{n}(t_{n})$ to $(\Sigma^{*}_{\infty}, h_{\infty}, j_{\infty})\in\tau_{\infty}$ as in Definition \ref{convergence of Riemann surfaces}. If we denote the one point compactification of $\Sigma_{\infty}$ by $\overline{\Sigma}_{\infty}$, and $j_{\infty}$ the extended complex structure, then there exist a conformal harmonic map $u_{0}:\big(\overline{\Sigma}_{\infty}, j_{\infty}\big)\rightarrow N$ and possibly some harmonic spheres $\{u_{i}:S^{2}\rightarrow N|\ i=1, \cdots, l\}$, such that $\big(\rho_{n}(t_{n}), (\Sigma_{n}, h_{n}, j_{n})\big)$ bubble tree converge\footnote{See \S \ref{further discussion}. We refer to \cite{SU1, SU2, Pa} and \cite[Appendix B.6]{CM3} for more details about bubble tree convergence.} to $\big(u_{0}, u_{1}, \ldots, u_{l}\big)$, with energy identity:
\begin{equation}\label{energy identity}
\underset{n\rightarrow\infty}{\lim}E\big(\rho_{n}(t_{n}), j_{n}\big)=E(u_{0}, j_{\infty})+\underset{i}{\sum}E(u_{i})
\end{equation}
\end{theorem}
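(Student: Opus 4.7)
The plan is to combine the almost-harmonicity output of Lemma \ref{compactification} with the compactification of moduli space from Proposition \ref{compactness of hyperbolic surfaces}, and then run a thick-thin bubbling argument in the spirit of Sacks-Uhlenbeck and Section 5 of \cite{Z}. First I would establish property $(*)$ directly from Lemma \ref{compactification}: both $E(\gamma_{n}(t_{n}),\tau_{n}(t_{n}))$ and $E(\rho_{n}(t_{n}),\tau_{n}(t_{n}))$ tend to $\mathcal{W}_{E}$ from above, so their difference goes to zero, and since $\Psi$ is continuous with $\Psi(0)=0$ the $W^{1,2}$ estimate \eqref{compactification formula} at $t=t_{n}$ forces the harmonic-replacement difference to vanish. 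Then I apply Proposition \ref{compactness of hyperbolic surfaces} to extract a subsequence of conformal structures converging in $\hat{\mathcal{M}}_{g}$ to $\tau_{\infty}$, realized by hyperbolic representatives $(\Sigma_{n},h_{n},j_{n})\to(\Sigma^{*}_{\infty},h_{\infty},j_{\infty})$ with diffeomorphisms $\phi_{n}:\Sigma_{n}\setminus\mathcal{L}_{n}\to\Sigma_{\infty}$ and degenerating geodesics $\mathcal{L}_{n}$ whose collars, by Lemma \ref{collar lemma} and the conformal change of coordinates that follows it, become cylinders of conformal modulus tending to infinity.

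Next I would treat the thick part. On every compact subset $K\subset\Sigma_{\infty}$, the pulled-back maps $\rho_{n}(t_{n})\circ\phi_{n}^{-1}$ are eventually defined, have uniformly bounded energy, and by property $(*)$ applied to a finite cover of $K$ by small geodesic balls of energy $\leq\epsilon_{0}$ they differ from their local harmonic replacements by a $W^{1,2}$-amount tending to zero. A standard $\epsilon$-regularity plus bubble-extraction argument (Appendix B.6 of \cite{CM3}, cf.\ \cite{SU1}\cite{SU2}) then produces, up to passing to a subsequence, a weak $W^{1,2}$ and $C^{0}_{\mathrm{loc}}$ limit $u_{0}$ defined on $\Sigma_{\infty}$ outside finitely many concentration points, with harmonic sphere bubbles $u_{1},\dots,u_{l}$ arising at those points via rescaling. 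Interior regularity and the removable-singularity theorem for finite-energy harmonic maps extend $u_{0}$ to the one-point compactification $\overline{\Sigma}_{\infty}$. The conformal property of $u_{0}$ passes to the limit from Lemma \ref{almost conformal for perturbed sequence}, which says $E(\rho_{n}(t_{n}),\tau_{n}(t_{n}))-\mathrm{Area}(\rho_{n}(t_{n}))\to 0$, and hence $u_{0}$ is conformal harmonic on $(\overline{\Sigma}_{\infty},j_{\infty})$.

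The main obstacle is the thin-part analysis and the energy identity \eqref{energy identity}. On each collar $\mathcal{C}(\gamma_{n}^{i})$, expressed after the coordinate change following Lemma \ref{collar lemma} as a long flat cylinder $C_{n}^{i}$ of height going to infinity, I would apply property $(*)$ on small conformal subdisks to control the $W^{1,2}$-distance from the local harmonic replacement, and combine this with the almost-conformal identity of Lemma \ref{almost conformal for perturbed sequence} so that area and energy coincide in the limit. Standard cylindrical neck analysis (as in \cite{SU1}\cite{SU2} and Section 5 of \cite{Z}) then dichotomizes the behavior: regions of bounded height on which energy stays above $\epsilon_{0}$ produce, after a conformal rescaling, nonconstant harmonic spheres added to the bubble list, while the remaining necks of unbounded modulus carry energy tending to zero; the almost-conformal identity rules out \emph{hidden energy} on such necks because any residual energy would show up as positive limiting area, contradicting additivity of area under the bubble convergence. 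Summing the contributions from $\Sigma_{\infty}$ and each collar yields simultaneously the bubble convergence onto $(u_{0},u_{1},\dots,u_{l})$ and the identity \eqref{energy identity}.

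The hard step is clearly the neck analysis: unlike the genus one case of \cite{Z}, there are potentially up to $3g-3$ simultaneously degenerating geodesics, and one must run the cylindrical bubble-extraction on each of them independently while reconciling the total area with $\mathcal{W}$. The key technical crutch making this work is the combination of $(*)$ (giving almost-harmonicity on arbitrary small balls including those inside the long cylinders) with the almost-conformal Lemma \ref{almost conformal for perturbed sequence} (forbidding energy concentration with zero area), which together play the role of the energy identity of Sacks-Uhlenbeck in the presence of genuine conformal degeneration.
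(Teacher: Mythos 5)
Your proposal is correct and follows essentially the same route as the paper: property $(*)$ from the $\Psi$-estimate of Lemma \ref{compactification}, compactness in $\hat{\mathcal{M}}_{g}$ via Proposition \ref{compactness of hyperbolic surfaces}, thick-part bubbling as in Lemma \ref{first bubble converge}, and collar/neck analysis combining almost-harmonicity with Lemma \ref{almost conformal for perturbed sequence}. One sentence of yours states the no-energy-loss mechanism backwards (residual neck energy would \emph{fail} to show up as area, since Proposition \ref{almost harmonic maps on necks} forces $\int|u_{\theta}|^{2}$ to be small and hence $E-\mathrm{Area}\geq\delta_{0}$ there, contradicting almost-conformality), but your closing paragraph identifies exactly this mechanism, so the argument stands.
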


\begin{remark}
In fact, property $(*)$ in the above theorem is scaling invariant, so we can apply the Sacks-Uhlenbeck's bubble tree convergence theory to $\{\rho_{n}(t_{n})\}$. In fact, the left hand side of (\ref{energy identity}) is the min-max critical value $\mathcal{W}$, and the right side is the sum of areas since $\big(u_{0}, u_{1}, \ldots, u_{l}\big)$ are all conformal, so we get the conclusion that the min-max critical value is achieved by the area of a set of minimal surfaces.
\end{remark}

The proof is divided into several steps in the following sections.


\subsubsection{Convergence on domains}\label{convergence on domains}

First we summarize some known facts of convergence of almost harmonic maps defined on a sequence of converging domains. Suppose that $\{(\Omega_{n}, h_{n}, j_{n})\}_{n=1}^{\infty}$ is a sequence of two dimensional domains with metrics $h_{n}$ and compatible complex structures $j_{n}$. We assume that $(\Omega_{n}, h_{n}, j_{n})\rightarrow(\Omega_{\infty}, h_{\infty}, j_{\infty})$ in the following sense. For $n$ large enough, there exist a sequence of diffeomorphisms $\phi_{n}:\Omega_{\infty}\rightarrow\Omega_{n}$, such that the pull-back metrics and complex structures converge, i.e. $(\phi_{n})^{*}h_{n}\rightarrow h_{\infty}$ and $(\phi_{n})^{*}j_{n}\rightarrow j_{\infty}$ in $C^{3}$ on any compact subsets of $\Omega_{\infty}$. Let $\{u_{n}:(\Omega_{n}, h_{n}, j_{n})\rightarrow N\}_{n=1}^{\infty}$ be a sequence of $W^{1, 2}$ almost harmonic maps satisfying the following condition:
\begin{itemize}
\vspace{-5pt}
\addtolength{\itemsep}{-0.7em}

\item[$(*1)$] For any geodesic small ball $B\in\Omega_{n}$ with radius less than the the injective radius of the center of the ball on $(\Om_n, h_n)$,
if $E(u_{n}, B)<\epsilon_{0}$ with $\epsilon_{0}$\footnote{In order to apply Sacks-Uhlenbeck's bubble tree convergence theory, we can pick $\epsilon_{0}<\epsilon_{SU}$, where $\epsilon_{SU}$ is a small constant depending only on the ambient manifold $N$ given in \cite[3.2]{SU1}.} given by Lemma \ref{compactification}, denote $v$ to be the harmonic replacement of $u_{n}$ on $\frac{1}{64}B$, then:
$$\int_{\frac{1}{64}B}|\nabla u_{n}-\nabla v|^{2}\leq\delta(n)\rightarrow 0.$$
\end{itemize}

\begin{lemma}\label{first bubble converge}
For a sequence $\{u_{n}:(\Omega_{n}, h_{n}, j_{n})\rightarrow N\}_{n=1}^{\infty}$ as above with $E(u_{n}, j_{n})\leq E_{0}<\infty$, there exist finitely many points $\{x_{1},\cdots,x_{k}\}\subset\Omega_{\infty}$, a subsequence $\{n^{\prime}\}$ and a harmonic mapping $u_{\infty}\in W^{1, 2}(\Omega\setminus\{x_{1},\cdots,x_{k}\}, N)$, such that for any compact subset $K\subset\Omega_{\infty}\setminus\{x_{1},\cdots,x_{k}\}$, the subsequence $u_{n^{\prime}}:(\phi_{n^{\prime}}(K)\subset\Omega_{n^{\prime}}, h_{n^{\prime}}, j_{n^{\prime}})\rightarrow N$ converge to $u_{\infty}$ in $W^{1, 2}$.
\end{lemma}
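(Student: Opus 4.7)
The approach is the standard Sacks-Uhlenbeck bubble-convergence argument adapted to the varying-domain setting via the harmonic replacement control $(*1)$. First I would pull everything back to the fixed limit domain via the diffeomorphisms $\phi_n$: set $\tilde{u}_n := u_n \circ \phi_n$ and $\tilde{h}_n := (\phi_n)^* h_n$, $\tilde{j}_n := (\phi_n)^* j_n$, so that $\tilde{h}_n \to h_\infty$ and $\tilde{j}_n \to j_\infty$ in $C^3_{\mathrm{loc}}(\Omega_\infty)$. Energy, harmonicity and the radii involved in $(*1)$ are conformal/isometric invariants, so the property transfers to the $\tilde{u}_n$ with the same error $\delta(n) \to 0$, and $E(\tilde{u}_n) \le E_0$.

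Next I would locate the bubble points. Define
\[
S := \Bigl\{ x \in \Omega_\infty : \liminf_{r \to 0} \limsup_{n \to \infty} E(\tilde{u}_n, B_r(x)) \ge \tfrac{1}{2}\epsilon_0 \Bigr\}.
\]
The uniform bound $E(\tilde{u}_n) \le E_0$ together with a Vitali-type covering argument gives $|S| \le 4E_0/\epsilon_0$, so $S = \{x_1, \dots, x_k\}$ is finite. For any $x \in \Omega_\infty \setminus S$ there exist $r_x > 0$ and a subsequence along which $E(\tilde{u}_n, B_{r_x}(x)) < \epsilon_0$ for all large $n$. Property $(*1)$ then produces, on $\tfrac{1}{64} B_{r_x}(x)$, an $\tilde{h}_n$-harmonic replacement $v_n$ with $\int |\nabla \tilde{u}_n - \nabla v_n|^2 \le \delta(n) \to 0$, and each $v_n$ has energy below the Sacks-Uhlenbeck small-energy threshold $\epsilon_{SU}$.

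The small-energy $\epsilon$-regularity for harmonic maps then yields uniform $C^{2,\alpha}$ bounds on $v_n$ on $\tfrac{1}{128}B_{r_x}(x)$, uniform in $n$ because $\tilde{h}_n \to h_\infty$ in $C^3$. Arzelà-Ascoli produces a further subsequence with $v_n \to v_\infty$ in $C^{2,\alpha}_{\mathrm{loc}}$, where $v_\infty$ is harmonic with respect to $h_\infty$; since $\tilde{u}_n - v_n \to 0$ in $W^{1,2}$ on the smaller ball, also $\tilde{u}_n \to v_\infty$ in $W^{1,2}_{\mathrm{loc}}$ there. A Cantor diagonal argument over a countable exhaustion of $\Omega_\infty \setminus S$ by such balls patches the local limits into a single harmonic map $u_\infty \in W^{1,2}_{\mathrm{loc}}(\Omega_\infty \setminus S, N)$ along one subsequence, which is the desired conclusion once translated back through $\phi_{n'}$.

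The main obstacle I anticipate is the uniform $\epsilon$-regularity of the harmonic replacements $v_n$ under the varying metrics $\tilde{h}_n$, since standard $\epsilon$-regularity is usually stated for a fixed background metric. I would handle this by covering a neighborhood of each $x \in \Omega_\infty \setminus S$ by conformal charts in which the $\tilde{h}_n$ are uniformly equivalent to the Euclidean metric, as already exploited in Section \ref{compactification for mappings}, and then applying the fixed-metric $\epsilon$-regularity; the constants depend only on the uniform $C^3$ bound on $\tilde{h}_n$ and not on $n$. Verifying that $u_\infty$ satisfies the weak harmonic map equation with respect to $h_\infty$ is then a routine passage to the limit in the weak formulation, using the $C^{2,\alpha}_{\mathrm{loc}}$ convergence of the $v_n$ together with the $C^3$ convergence of the metrics.
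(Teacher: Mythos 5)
Your argument is correct and is essentially the proof the paper has in mind: the paper omits the details, deferring to step 1 of the proof of Theorem 5.1 of \cite{Z} and Appendix B.2 of \cite{CM3}, both of which run exactly your way --- pull back to the fixed domain, isolate the finite concentration set by a covering argument, use $(*1)$ to replace $u_n$ on small-energy balls by genuinely harmonic maps, apply small-energy interior estimates (uniform in $n$ by the $C^{3}_{loc}$ convergence of the metrics) to the replacements, and diagonalize. The only cosmetic point is that the finiteness of $S$ and the count $|S|\leq 4E_{0}/\epsilon_{0}$ are cleanest if you first pass to a subsequence along which the energy measures $|\nabla\tilde{u}_{n}|^{2}\,dvol_{\tilde{h}_{n}}$ converge weakly as Radon measures, since a sum of limsups is not a limsup of sums; this is standard and does not affect the argument.
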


\begin{remark}
The convergence of $u_{n^{\prime}}$ to $u_{\infty}$ can be understood as the convergence after pulling $u_{n^{\prime}}$ back to $\Omega_{\infty}$ by $\phi_{n^{\prime}}$. We call points $\{x_{1},\cdots,x_{k}\}$ the energy concentration points. The proof of results similar to the above lemma is given in \cite{SU1, SU2}, \cite[Appendix B.2]{CM3} and the proof of \cite[Theorem 5.1]{Z}. In fact, step 1 of the proof of \cite[Theorem 5.1]{Z} almost directly gives the proof of the above lemma, so we omit it. By the Removable Singularity Theorem \cite[Theorem 3.6]{SU1}, we can extend $u_{\infty}$ to a harmonic map on $\Omega_{\infty}$.
\end{remark}


\subsubsection{Convergence on cylinders}\label{convergence on cylinders}

Now based on the above lemma, the next step to study the convergence of $\{(\rho_{n}, \tau_{n})\}_{n=1}^{\infty}$ is to do rescaling near energy concentration points, and then consider regions near degenerating geodesics. In both of the cases which we will discuss in detail later, we need to consider almost harmonic maps on long cylinders. We use $\mathcal{C}_{t^{1},t^{2}}=\{(t, \theta)\in\mathbb{R}\times S^{1}:\ t^{1}\leq t\leq t^{2}, \theta\in[0, 2\pi)\}$ to denote a cylinder with length parameter between $t^{1}$ and $t^{2}$, and \emph{$h$ a metric on $\mathcal{C}_{t^{1},t^{2}}$ conformal to the standard metric $ds^{2}=dt^{2}+d\theta^{2}$}. We denote $S_{t^{0}}=\{(t, \theta): t=t^{0}, \theta\in[0, 2\pi)\}$ to be a slice of $\mathcal{C}_{t^{1},t^{2}}$. We say a sequence of cylinders $\{(\mathcal{C}_{t^{1}_{n}, t^{2}_{n}}, h_{n}):\ 1\leq n<\infty\}$ converge to $(\mathcal{C}_{\infty}=\mathbb{R}\times S^{1}, ds^{2}=dt^{2}+d\theta^{2})$, if when we identify all the cylinders by the center slices $S_{t^{0}_{n}}$ with $t^{0}_{n}=\frac{1}{2}(t^{1}_{n}+t^{2}_{n})$, the metrics $h_{n}$ converges in $C^{3}$ to $ds^{2}$ on any compact subsets of $\mathcal{C}_{\infty}$, i.e. when we choose $\phi_{n}: \mathcal{C}_{t^{1}_{n}, t^{2}_{n}}\rightarrow\mathcal{C}_{\infty}$, such that $\phi_{n}(t, \theta)=(t-t^{0}_{n}, \theta)$, then $(\phi_{n})_{*}h_{n}\rightarrow ds^{2}$ in $C^{3}(K)$ for any compact subset $K\subset\mathcal{C}_{\infty}$. Consider a sequence of almost harmonic maps defined on a sequence of converging cylinders $\{u_{n}:(\mathcal{C}_{t^{1}_{n}, t^{2}_{n}}, h_{n})\rightarrow N|\ n=1,\cdots,\infty\}$ satisfying property $(*1)$ in \S \ref{convergence on domains}. By Lemma \ref{first bubble converge}, they sub-converge to a harmonic map on $\mathcal{C}_{\infty}$. Before discussing further results, we need to introduce another type of almost harmonic maps and a corresponding energy estimate.

\begin{definition}\label{almost harmonic maps}
For $\nu >0$, we call $u\in W^{1,2}\big((\mathcal{C}_{r_{1}, r_{2}}, h), N\big)$ a \textbf{$\nu$-almost harmonic map} (see \cite[Definition B.27]{CM3}) if for any finite collection of disjoint geodesic balls $\mathcal{B}$ in $(\mathcal{C}_{r_{1}, r_{2}}, h)$ with the radius of each ball bounded by the injective radius of the center of that ball on $(\mathcal{C}_{r_{1}, r_{2}}, h)$, 
there is an energy minimizing map $v:\frac{1}{64}\mathcal{B}\rightarrow N$ with the same boundary value as $u$ such that:
\begin{equation}
\int_{\frac{1}{64}\mathcal{B}}|\nabla u-\nabla v|^{2}\leq\nu\int_{\mathcal{C}_{r_{1}, r_{2}}}|\nabla u|^{2}.
\end{equation}
\end{definition}
This definition traces back to \cite[Definition B.27]{CM3}, but we modify it here to be adapted to our setting. Now a proof similar to that of \cite[Proposition B.29]{CM3} gives a similar estimate as follows.

\begin{proposition}\label{almost harmonic maps on necks}
For any $\delta>0$, there exist small constants $\nu>0$ (depending on $h$, $\delta$ and $N$), $\epsilon_{2}>0$ and a large constant $l\geq 1$ (depending on $\delta$ and $N$), such that for any positive integer $m$, if $u$ is a $\nu$-almost harmonic map defined on $(\mathcal{C}_{-(m+3)l, 3l}, h)$ with $E(u)\leq\epsilon_{2}$\footnote{We can let $\epsilon_{2}<\epsilon_{SU}$ again as above.}, then:
\begin{equation}
\int_{\mathcal{C}_{-ml, 0}}|u_{\theta}|^{2}\leq 7\delta\int_{\mathcal{C}_{-(m+3)l, 3l}}|\nabla u|^{2}.
\end{equation}
Here $u_{\theta}$ means the differentiation w.r.t. $\theta$.
\end{proposition}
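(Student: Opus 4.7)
My plan is to adapt the proof of Proposition B.29 in \cite{CM3} to the present setup. The first observation is that both sides of the desired inequality are conformally invariant in dimension two, as is the harmonic map equation itself; so for the purposes of harmonic replacements and energy comparisons I may work with the flat cylindrical metric $ds^2 = dt^2 + d\theta^2$ in place of $h$, provided I only use geodesic balls that are so small that $h$ and $ds^2$ are uniformly equivalent on them (guaranteed by $r_{0}$ and the bound on injective radius). Having reduced to the flat cylinder, a $\nu$-almost harmonic map $u$ on $\mathcal{C}_{-(m+3)l,\,3l}$ with $E(u)\le\epsilon_2$ satisfies: for any geodesic ball $B$ of controlled radius, the energy-minimizing replacement $v$ of $u$ on $\frac{1}{64}B$ obeys $\int_{\frac{1}{64}B}|\nabla u-\nabla v|^2\le \nu E(u)\le \nu\epsilon_2$.

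The main step is a comparison on each unit sub-cylinder of length $l$. I would cover $\mathcal{C}_{-ml,0}$ by sub-cylinders $\mathcal{C}_{kl,(k+1)l}$ and, for each one, invoke the $\nu$-almost harmonic hypothesis with a collection of balls chosen so that $\frac{1}{64}$ of their union covers the central portion of the sub-cylinder. Then, on each sub-cylinder, I construct an explicit $\theta$-independent comparison map $w(t)$ by averaging $u$ over each slice $S_t$ at the two boundary circles and interpolating. By $\epsilon$-regularity for almost harmonic maps (which follows from the hypothesis since $\epsilon_2$ is chosen less than $\epsilon_{SU}$), $u$ takes values in a small geodesic ball of $N$ after appropriate rescaling, so the averaging and interpolation can be carried out inside a geodesically convex chart and the resulting $w$ lies in $N$. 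A standard Poincar\'e inequality on each slice of the flat cylinder gives
\[
E(w) \;\le\; \int |u_t|^2 \;=\; E(u) - \int |u_\theta|^2,
\]
up to terms controlled by the curvature of $N$ and by the boundary values (which are in turn controlled via Fubini by the total energy). Combining this with the energy-minimizing property of the harmonic replacement and the $\nu$-almost harmonic bound yields, on each sub-cylinder,
\[
\int |u_\theta|^2 \;\le\; C\Bigl( \sqrt{\nu E(u)\cdot E(u)} + \nu E(u) + l^{-1} E(u) \Bigr),
\]
where the $l^{-1}$ term encodes the length-dependence of the Poincar\'e constant in the direction transverse to $S_t$.

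Finally, I would sum these per-sub-cylinder estimates over the $m$ sub-cylinders covering $\mathcal{C}_{-ml,0}$, using that each point is covered a bounded number of times and that the total energy on the enlarged cylinder $\mathcal{C}_{-(m+3)l,3l}$ serves as the reference. Choosing $l$ large enough that $C\,l^{-1} < \delta$, then $\nu$ small enough that $C(\sqrt{\nu}+\nu) < \delta$, and $\epsilon_2$ small enough to enforce the $\epsilon$-regularity and averaging steps, I obtain the target bound with the constant $7\delta$, where the $7$ is an artefact of overlap counting and the three-unit buffer on each end. The principal obstacle is calibrating the three parameters $(l,\nu,\epsilon_2)$ consistently: enlarging $l$ improves the Poincar\'e constant but degrades the proportion $\nu E(u)/E(u_{\text{sub}})$ one extracts from the almost harmonic hypothesis on each sub-cylinder, so a careful accounting is needed to keep the right-hand side proportional to the total energy rather than to the per-sub-cylinder energy. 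A secondary subtlety is the nonlinear correction from the curvature of $N$ in the comparison $E(w)\le E(u)-\int|u_\theta|^2$, which must be absorbed either by shrinking $\epsilon_2$ further (exploiting the $C^0$-smallness of $u$) or by using the uniform continuity estimates of Sacks-Uhlenbeck in \cite{SU1}.
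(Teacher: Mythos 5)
Your reduction to the flat cylinder and your overall skeleton (compare $u$ with harmonic replacements, convert the energy drop into a bound on the angular energy, sum over sub-cylinders) follow the outline of the argument this paper invokes, but the central step fails as written. The energy-minimizing property of the harmonic replacement $v$ on $\frac{1}{64}\mathcal{B}$ only permits comparison against maps that agree with $u$ on $\partial(\frac{1}{64}\mathcal{B})$; your $\theta$-independent map $w(t)$, obtained by slice-averaging, does not have these boundary values, so the chain $E(v)\le E(w)\le E(u)-\int|u_{\theta}|^{2}$ is unjustified at its first link. If you try to repair this by interpolating between $u$ and its slice average in a thin collar near the boundary, the interpolation costs an amount of energy comparable to the angular energy near that boundary --- precisely the quantity you are trying to bound --- and the argument becomes circular. (A smaller point: a single admissible collection $\mathcal{B}$ consists of \emph{disjoint} balls, so the shrunken balls $\frac{1}{64}B_{i}$ can never cover a sub-cylinder; one needs boundedly many collections, each contributing its own error.)

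What is missing is the actual mechanism of the proof of Proposition B.29 in \cite{CM3}, to which the paper defers: for a genuine harmonic map $v$ of small energy on a cylinder, the slice-wise deviation $q(t)=\int_{S_{t}}|v-\bar v(t)|^{2}$ satisfies a differential inequality of the form $q''\ge 2q$ up to curvature errors, so the nonconstant Fourier modes --- and with them $\int_{S_{t}}|v_{\theta}|^{2}$ --- decay \emph{exponentially} in the distance to the boundary of the cylinder. This is what the large parameter $l$ buys: after $l$ units of propagation the angular energy has decayed by $e^{-cl}$. Transferring this to the $\nu$-almost harmonic $u$ through the replacement estimate and iterating along the cylinder, the error committed at depth $k$ reaches the middle region multiplied by an exponentially decaying weight, so the total error is a geometric series bounded by $C(\sqrt{\nu}+e^{-cl})E(u)$ \emph{uniformly in $m$}. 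Your version contains no such decay: a slice-wise Poincar\'e inequality on $S^{1}$ has a fixed constant, nothing in your estimate improves as $l\to\infty$ (the claimed $l^{-1}$ ``transverse Poincar\'e constant'' is not a real effect), and a per-sub-cylinder error of order $\nu E(u)$ summed over $m$ sub-cylinders gives $m\nu E(u)$, which cannot be made $\le\delta E(u)$ with $\nu$ independent of $m$, as the statement requires. You correctly flag this calibration as ``the principal obstacle,'' but it is not a bookkeeping issue; it is resolved only by the exponential decay estimate, which your proposal does not contain.
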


Now we would like to give a more precise description of the convergence on cylinders.

\begin{lemma}\label{bubble converge on cylinders}
In the convergence of $u_{n}:(\mathcal{C}_{t^{1}_{n}, t^{2}_{n}}, h_{n})\rightarrow N$ as discussed above, if $E(u_{n})\leq\epsilon_{2}$ with $\epsilon_{2}$ given in Proposition \ref{almost harmonic maps on necks}, then either $\liminf_{n\rightarrow\infty}E(u_{n})=0$, or $u_{n}$ must be uniformly un-conformal for $n$ large enough in the following sense, i.e. there exists a small number $\delta_{0}>0$, such that:
\begin{equation}
E(u_{n})-Area(u_{n})\geq\delta_{0}.
\end{equation}
Furthermore, if $\{u_{n}\}$ are almost conformal, i.e. $\lim_{n\rightarrow\infty}\big(E(u_n)-Area(u_n)\big)-0$, and satify that $\liminf_{n\rightarrow\infty}E(u_{n})\geq\epsilon_{2}$, then there exists a large fixed number $L>0$, such that $E(\rho_{n}, \mathcal{C}_{r_{n}^{0}-L, r_{n}^{0}+L})\geq\epsilon_{2}$, i.e. the energy must concentrate on some finite part of the cylinders.
\end{lemma}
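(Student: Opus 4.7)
The plan is to reduce both claims to Proposition \ref{almost harmonic maps on necks}, which forces $|u_\theta|$ to be small on long cylinders of small total energy, and then exploit the elementary pointwise inequality
$$E(u) - Area(u) \geq \frac{1}{2}\int_{\mathcal{C}} \big(|u_t| - |u_\theta|\big)^2 \, dt\, d\theta,$$
valid in the flat conformal coordinates $(t,\theta)$: here $h_n$ is conformal to $dt^2 + d\theta^2$, and both $E$ and $Area$ are conformally invariant. Before applying Proposition \ref{almost harmonic maps on necks} I first upgrade hypothesis $(*1)$ on $u_n$, which only provides $\int_{\frac{1}{64}\mathcal{B}} |\nabla u_n - \nabla v|^2 \leq \delta(n) \to 0$, to the $\nu$-almost harmonic condition of Definition \ref{almost harmonic maps}. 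This is automatic as soon as $\liminf E(u_n) > 0$, since then $\delta(n)/E(u_n) \to 0$ meets any prescribed tolerance $\nu$.

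For the first claim, assume $\liminf E(u_n) > 0$ (otherwise we are in the stated alternative). Fix $\delta > 0$ small, and let $\nu, l$ be as in Proposition \ref{almost harmonic maps on necks}. Because $(\mathcal{C}_{t_n^1, t_n^2}, h_n) \to \mathbb{R}\times S^1$, for $n$ large the cylinder contains sub-cylinders arbitrarily close in $C^3$ to standard $\mathcal{C}_{-(m+3)l, 3l}$ with $m$ as large as desired. Since $E(u_n) \leq \epsilon_2$, Proposition \ref{almost harmonic maps on necks} gives $\int_{\mathcal{C}_{-ml, 0}} |u_\theta|^2 \leq 7\delta E(u_n)$. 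Cauchy-Schwarz then yields $\int |u_t|\,|u_\theta| \leq \bigl(\int |u_t|^2\bigr)^{1/2}\bigl(\int |u_\theta|^2\bigr)^{1/2} \leq \sqrt{14\delta}\, E(u_n)$, so on this sub-cylinder
$$E(u_n) - Area(u_n) \geq \tfrac{1}{2}\int(|u_t|^2 + |u_\theta|^2) - \int |u_t|\,|u_\theta| \geq \bigl(1 - \sqrt{14\delta}\bigr) E(u_n).$$
Choosing $\delta$ small (say $\delta < 1/56$) and passing to the $\liminf$ produces $\delta_0 = \tfrac{1}{2}\liminf E(u_n) > 0$, establishing the un-conformal alternative.

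For the second claim I argue by contradiction. If the $u_n$ are almost conformal ($E(u_n) - Area(u_n) \to 0$) but no such $L$ exists, then after passing to a subsequence every window of length $\sim (m+6)l$ inside $\mathcal{C}_{t_n^1, t_n^2}$ carries energy strictly less than $\epsilon_2$; otherwise a suitable translate would locate a definite amount of energy in a bounded window. I then cover each cylinder by overlapping sub-cylinders of this form, apply Proposition \ref{almost harmonic maps on necks} on each, and sum to obtain $\int |u_\theta|^2 \leq C\delta\, E(u_n)$ over essentially the full cylinder. The same Cauchy-Schwarz computation as above then forces $E(u_n) - Area(u_n) \geq (1 - \sqrt{2C\delta})\, E(u_n) - o(1)$, which for $\delta$ small is incompatible with both almost conformality and $\liminf E(u_n) \geq \epsilon_2$.

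The principal obstacle is the bookkeeping for the covering in the second claim: the mandatory $3l$-buffers at each end in Proposition \ref{almost harmonic maps on necks} must be arranged so as not to eat up a definite fraction of the cylinder, and one must turn ``no concentration in a single window'' into the \emph{uniform} sub-cylinder energy bound $<\epsilon_2$ along the entire neck, using translation invariance of the standard cylinder limit. A secondary point is verifying the $\nu$-almost harmonic condition with the correct $\nu$ coming from Proposition \ref{almost harmonic maps on necks} before any fixed $\delta$ is chosen; this forces the order of quantifiers to be handled carefully, but is analogous to the corresponding argument in Appendix B of \cite{CM3} and Section 5 of \cite{Z}.
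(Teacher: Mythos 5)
Your overall strategy --- upgrading $(*1)$ to the $\nu$-almost harmonic condition, invoking Proposition \ref{almost harmonic maps on necks} to suppress $|u_{\theta}|$, and then using the pointwise inequality $e(u)-a(u)\geq\frac{1}{2}\big(|u_{t}|-|u_{\theta}|\big)^{2}$ in the flat conformal coordinates --- is exactly the route the paper takes (this is the computation of equation 55 of \cite{Z} that the paper's remark cites), so the first alternative is handled in essentially the intended way. One caveat even there: Proposition \ref{almost harmonic maps on necks} only controls $\int|u_{\theta}|^{2}$ on the trimmed middle $\mathcal{C}_{-ml,0}$, so the quantity your chain of inequalities actually bounds from below is $E\big(u_{n},\mathcal{C}_{-ml,0}\big)-\sqrt{14\delta}\,E(u_{n})$, not $(1-\sqrt{14\delta})E(u_{n})$; you still need to explain why a definite fraction of the energy lives on the trimmed middle rather than in the two $3l$-collars at the ends (in the application this is supplied by how the necks and collars sit inside the surfaces; the lemma in isolation does not supply it).

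The covering argument for the second claim has a genuine gap. Proposition \ref{almost harmonic maps on necks} requires $u$ to be $\nu$-almost harmonic \emph{on the window to which it is applied}: the replacement error must be at most $\nu\int_{W_{i}}|\nabla u|^{2}$, the energy of that window. Hypothesis $(*1)$ gives only an absolute error $\delta(n)\rightarrow 0$, which you convert to a relative one by dividing by $E(u_{n})\geq c>0$; that works for the single near-global window used in part one, but not for the windows $W_{i}$ of your cover, whose individual energies may be of order $\epsilon_{2}\,l/(t_{n}^{2}-t_{n}^{1})\rightarrow 0$. The configuration you must exclude --- energy spread thinly and conformally along the whole cylinder --- is precisely the one in which every window has vanishing energy, so the covering argument cannot certify the hypothesis exactly where it is needed, and the trivial bound $\int_{M_{i}}|u_{\theta}|^{2}\leq 2E(u_{n},W_{i})$ summed over such windows returns $O(E(u_{n}))$ with no gain. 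The paper's (and \cite{Z}'s) route avoids this: from the failure of concentration one extracts a single long sub-cylinder whose energy stays in $[c,\epsilon_{2}]$ for a fixed $c>0$, applies the first alternative to it, and contradicts almost conformality. Separately, what any version of this argument produces is a window of fixed length \emph{somewhere} on the cylinder (within $3l$ of wherever the energy accumulates), not a priori the centered window $\mathcal{C}_{r_{n}^{0}-L,r_{n}^{0}+L}$ the lemma literally names; your sentence about ``a suitable translate'' is the right reading of the conclusion, but it should be made the explicit statement being negated, with the quantifier over the window's position handled before the diagonalization in $L$.
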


\begin{remark}
This is a summarization of the results proved in step 5 of the proof of \cite[Theorem 5.1]{Z}. In fact, if $E(u_{n})\leq\epsilon_{2}$ and $\liminf_{n\rightarrow\infty}E(u_{n})>0$, it is easy to show that $u_{n}$ is $\mu$-almost harmonic as in Definition \ref{almost harmonic maps} for $\mu$ small enough when $n$ is large enough. If we apply the estimate in Proposition \ref{almost harmonic maps on necks}, we get an upper bound for $\int_{\mathcal{C}_{-ml, 0}}|(u_{n})_{\theta}|^{2}$. Then by computing the difference between the energy and area of $u_{n}$ as in \cite[(55)]{Z}, we will get the lower bound for $E(u_{n})-Area(u_{n})$. In the second case, we use contradiction argument. We will go back to the first case to get a sequence of almost harmonic mappings on long cylinders with energy bounded from above by $\epsilon_{2}$ and away from $0$, which will lead to a contradiction to the almost conformal property. We omit the detailed proof here and refer that to \cite{Z}.
\end{remark}


\subsubsection{Proof of Theorem \ref{convergence}}

Now we use the results summarized above to show the bubble tree convergence and the energy identity (\ref{energy identity}) of Theorem \ref{convergence}. Let us denote $\rho_{n}=\rho_{n}(t_{n})$, and $\tau_{n}=\tau_{n}(t_{n})$ in the following.

\textbf{Step 1: bubble tree convergence on domain surfaces.} In the convergence of $(\Sigma_{n}, h_{n}, j_{n})\in\tau_{n}$ to $(\Sigma^{*}_{\infty}, h_{\infty}, j_{\infty})\in\tau_{\infty}$, let us denote $\mathcal{L}_{n}$ to be the sets of geodesics and $\phi_{n}:\Sigma_{n}\rightarrow\Sigma^{*}_{\infty}$ the continuous mappings as in Definition \ref{convergence of Riemann surfaces}. Now let us consider the sequence of almost harmonic maps $\{\rho_{n}:(\Sigma_{n}\setminus\mathcal{L}_{n}, h_{n}, j_{n})\rightarrow N\}_{n=1}^{\infty}$ satisfying property $(*)$ in Theorem \ref{convergence}. By Lemma \ref{first bubble converge}, there exists a finite set of energy concentration points $\{x_{1},\cdots,x_{l}\}$ on the body $\Sigma_{\infty}$ of $\Sigma^{*}_{\infty}$, and a subsequence which we still denote by $\rho_{n}$, that converge to a harmonic map $u_{0}:\Sigma_{\infty}\rightarrow N$ in $W^{1, 2}$ on any compact subsets of $\Sigma_{n}\setminus(\mathcal{L}_{n}\cup\phi_{n}^{-1}\{x_{1},\cdots,x_{l}\})$. Denote $x_{n, i}=\phi_{n}^{-1}(x_{i})$. Near each energy concentration point $x_{n, i}$, let $r_{n,i}$ be the smallest radii such that $E(\rho_{n}, B_{x_{n, i}, r_{n,i}})=\epsilon_{0}$ with $\epsilon_{0}$ as in condition $(*1)$ of \S \ref{convergence on domains}, where $B_{x_{n, i}, r}$ denotes the hyperbolic geodesic balls centered at $x_{n, i}$ with radii $r$ on $\Sigma_{n}$. View $B_{x_{n, i}, r_{n, i}}$ as a ball on the Poincar\'e disk $(D, ds^{2}_{-1})$ centered at the origin $0$, and use the coordinates there. Now rescale $B_{x_{n, i}, r_{n, i}}$ to $B_{0, 1}\subset\mathbb{C}$ by $x\rightarrow x/r_{n, i}^{0}$, where $r_{n, i}^{0}$ is the Euclidean radius of $B_{x_{n, i}, r_{n, i}}$ measured w.r.t. the Euclidean metric on $(D, ds^{2}_{0})$. In fact, $r_{n, i}$ and $r_{n, i}^{0}$ are almost the same when $r_{n, i}\rightarrow 0$ as $n\rightarrow\infty$. Then rescale the hyperbolic metric $ds^{2}_{-1}$ to be $ds^{2}_{n}=\frac{|dz|^{2}}{1-|r_{n, i}^{0}z|^{2}}$, which converge to the flat metric on any compact subsets of $\mathbb{C}$. Let $u_{n, i}(x)=\rho_{n}(r_{n, i}^{0}x)$. Since properties $(*)$ and $(*1)$ are scaling invariant, the sequence $\big\{\big(u_{n, i}, (B_{r/r_{n, i}^{0}}, ds^{2}_{n})\big)\big\}_{n=1}^{\infty}$ satisfy the requirement of Lemma \ref{first bubble converge} again for some fixed small radius $r$. So a subsequence of $\{u_{n, i}\}_{n=1}^{\infty}$ converge in $W^{1, 2}$ to a harmonic map $u_{\infty, i}$ defined on $\mathbb{C}$ in the sense of Lemma \ref{first bubble converge} again. We can repeat such processes near energy concentration points step by step. An important observation is that $u_{\infty, i}:\mathbb{C}\rightarrow N$ is an nontrivial harmonic map, since the energy of $u_{n, i}$ over $B(0, 1)$ is $\epsilon_{1}$ by the conformal invariance of energy and our choice of the bubbling region $B_{x_{n, i}, r_{n, i}}$. Then $u_{\infty, i}$ extends to a harmonic map on the sphere, whose energy is bounded below by $\epsilon_{SU}$ \cite[Theorem 3.3]{SU1}. We call all such harmonic spheres bubbles. So for each step, the total energy is decreased by some fixed amount, hence it must stop in finitely many steps.

\vspace{0.5em}
\textbf{Step 2: bubble tree convergence on necks and collars.} To prove the energy identity (\ref{energy identity}), we need to study the behavior of the limit process on some small annuli and collar neighborhoods of degenerating geodesics. Near an energy concentration point, if we compare the energy limit $\lim_{n\rightarrow\infty}E(\rho_{n}, B(x_{i}, r))$ with the sum of the limit energy $E(u_{0}, B(x_{i}, r))$ and the bubble energy $\lim_{n\rightarrow\infty}E(u_{n, i}, B_{r/r_{n, i}^{0}})$, we need to count the neck part, which is given by
$$\lim_{r\rightarrow 0, R\rightarrow\infty}\lim_{n\rightarrow\infty}E\big(\rho_{n}, B(x_{i}, r)\backslash B(x_{i}, r_{n, i}^{0}R)\big).$$
Here we refer to the step 4 in the proof  of \cite[Theorem 5.1]{Z} for details. Denote the annuli by $A(x_{i}, r, r_{n, i}^{0}R)=B(x_{i}, r)\setminus B(x_{i}, r_{n, i}^{0}R)$, and we call them necks. Under the change of coordinates $(r, \theta)\rightarrow(t, \theta)=(\log r, \theta)$, the annuli are changed to long cylinders $\mathcal{C}_{r^{1}_{n}, r^{2}_{n}}$, with $r^{1}_{n}=\ln(r_{n, i}R)$, $r^{2}_{n}=\ln(r)$, and the hyperbolic metrics are $ds^{2}_{-1}=\frac{e^{2t}}{1-e^{2t}}(dt^{2}+d\theta^{2})$. When we rescale the metrics such that the center slice $S_{t_{n}^{0}}$ has length $2\pi$, it is easy to see that the metrics converge to the flat metric on any compact subset of the infinite long cylinder $\mathbb{R}\times S^{1}$. Since property $(*)$ is invariant under scaling, we go back to the setting of \S \ref{convergence on cylinders}. We will continue studying the convergence in this case after we introduce the behavior near degenerating geodesics.

Now let us see the behavior near degenerating geodesics $\gamma_{n}^{i}\in\mathcal{L}_{n}$. Similar arguments as in the case of necks show that if we want to recover all the energy of $\rho_{n}$ on $\Sigma_{n}$ from the limit $u_{0}$ and all the bubbles $u_{i}:S^2\rightarrow N$, we need to consider the amount of energy on the collar neighborhoods $\mathcal{C}(\gamma_{n}^{i})$ given by Lemma \ref{collar lemma}. As in (\ref{collar region}), we use $(r, \theta)$ as parameters for the cylinder, and denote $\mathcal{C}(\gamma_{n}^{i}, \theta_{0})$ to be the sub-collar with $\theta_{0}\leq\theta\leq\pi-\theta_{0}$. In fact, as $l_{n}=l(\gamma_{n}^{i})\rightarrow 0$, we need to take care of the limit $\lim_{\theta_{0}\rightarrow\frac{\pi}{2}}\lim_{n\rightarrow\infty}E(\rho_{n}, \mathcal{C}(\gamma_{n}^{i}, \theta_{0}))$. Using the change of coordinates given in Remark \ref{remark of collar lemma}, those collars can be viewed as a sequence of cylinders $\mathcal{C}_{r_{n}^{1}, r_{n}^{2}}$ with $r_{n}^{1}=\frac{2\pi}{l_{n}}\theta_{0}$, $r_{n}^{2}=\frac{2\pi}{l_{n}}(\pi-\theta_{0})$. If we rescale the hyperbolic metrics $ds_{-1}^{2}=(\frac{l_{n}}{2\pi\sin(\frac{l_{n}}{2\pi}t)})^{2}(dt^{2}+d\phi^{2})$ on $\mathcal{C}_{r_{n}^{1}, r_{n}^{2}}$ such that the center slice $S_{(\frac{2\pi}{l_{n}})\frac{\pi}{2}}$ has length $2\pi$, it is easy to see that those metrics converge to the flat metric on any compact subset of $\mathbb{R}\times S^{1}$ , which goes back to the setting for the \S \ref{convergence on cylinders} again by the conformal invariance of property $(*)$.

Summarizing the above two paragraphs, we need to study the case of a sequence of almost harmonic maps defining on cylinders approximating the infinite long standard cylinder. If $\liminf_{n\rightarrow\infty}E\big(\rho_{n}, (\mathcal{C}_{r_{n}^{1}, r_{n}^{2}}, ds_{-1}^{2})\big)=0$, then we can discard this part in the energy identity (\ref{energy identity}), or since the sequence of maps are almost conformal by Lemma \ref{almost conformal for perturbed sequence}, we have that $\liminf_{n}E\big(\rho_{n}, (\mathcal{C}_{r_{n}^{1}, r_{n}^{2}}, ds_{-1}^{2})\big)\geq\epsilon_{2}$ by Lemma \ref{bubble converge on cylinders}. Then there exists a large fixed number $L>0$, such that $E(\rho_{n}, \mathcal{C}_{r_{n}^{0}-L, r_{n}^{0}+L})\geq\epsilon_{2}$ by the second part of Lemma \ref{bubble converge on cylinders}. Now $\big(\rho_{n}, (\mathcal{C}_{r_{n}^{1}, r_{n}^{2}}, ds_{-1}^{2})\big)$ converge in $W^{1, 2}$ to a harmonic map $u_{\infty}:\mathbb{R}\times S^{2}\rightarrow N$ on any compact subsets of $\mathbb{R}\times S^{2}$ minus possibly finite many energy concentration points by Lemma \ref{first bubble converge}. We can repeat the above steps near energy concentration points again. Now in order to count all the energy, we need to consider sub-cylinders $\mathcal{C}_{t_{n}-L_{n}, t_{n}+L_{n}}\subset\mathcal{C}_{r_{n}^{1}, r_{n}^{2}}$ with $|t_{n}-t_{n}^{0}|\rightarrow\infty$ and $L_{n}\rightarrow\infty$. We need to show that $\lim_{n\rightarrow\infty}E\big(\rho_{n}, \mathcal{C}_{t_{n}-L_{n}, t_{n}+L_{n}}\big)$ is counted by some bubble maps. In fact, when we rescale the metrics such that the center slice $S_{t_{n}}$ of $\mathcal{C}_{t_{n}-L_{n}, t_{n}+L_{n}}$ has length $2\pi$, the sequence of cylinders will converge to $\mathbb{R}\times S^{1}$ again as above. So we can repeat the steps again.

We can see that no energy loss will happen since once there are energy concentrated on long cylinders, they must be counted in the next bubbling step. We know that either $u_{\infty}:\mathbb{R}\times S^{1}\rightarrow N$ is nontrivial, which can be extended to a harmonic map on $S^{2}$ by the Removable Singularity Theorem \cite[Theorem 3.6]{SU1}, since $S^{2}$ is conformal to $\mathbb{R}\times S^{1}$, or some of the bubble maps near energy concentration points are nontrivial since $E(\rho_{n}, \mathcal{C}_{r_{n}^{0}-L, r_{n}^{0}+L})\geq\epsilon_{2}$. So each of such steps also takes away a fixed amount of energy, so we must stop in finite many steps. All such steps form the convergence in Theorem \ref{convergence}. Count all the energy of those finitely many bubble maps, which are harmonic maps on spheres, we will get the energy identity (\ref{energy identity}). So we finish the proof.


\parindent 0ex
Department of Mathematics, Stanford University\\
Stanford, CA 94305\\
E-mail: xzhou08@math.stanford.edu\\

{\em Current address}: Massachusetts Institute of Technology\\
E-mail: xinzhou@math.mit.edu

\end{document}